\def\bb0{\mathbb{0}}
\def\cA{\mathcal{A}}
\def\EE{\mathbb{E}}
\def\PP{\mathbb{P}}
\def\cF{\mathcal{F}}
\def\fI{\mathfrak{I}}
\def\cJ{\mathcal{J}}
\def\bL{\mathbb{L}}
\def\fL{\mathfrak{L}}
\def\bN{\mathbb{N}}
\def\RR{\mathbb{R}}
\def\cS{\mathcal{S}}
\def\cS{\mathcal{S}}
\def\bT{\mathbb{T}}
\def\Om{\Omega}
\def\LL{[-L,L]}
\newcommand\bP{\mathbf{P}}
\newcommand\HH{\mathbb{H}}
\def\al{{\alpha}}
\def\ls{{\lesssim}}
\def\gs{{\gtrsim}}
\def\be{{\beta}}
\def\Om{{\Omega}}
\def\al{{\alpha}}
\def\be{{\beta}}
\def\ph{{\varphi}}
\def\vare{{\varepsilon}}
\def\bP{{\mathbf{P}}}
\def\diam { {\rm{diam\,}}}
\definecolor{ao(english)}{rgb}{0.0, 0.5, 0.0}
\newcommand{\1}{{\bf 1}}
\newcommand{\blc}{\big(}
\newcommand{\brc}{\big)}
\newcommand{\Blc}{\Big(}
\newcommand{\Brc}{\Big)}
\newcommand{\Blk}{\Big[}
\newcommand{\Brk}{\Big]}
\newcommand{\lc}{\left(}
\newcommand{\rc}{\right)}
\newcommand{\lk}{\left[}
\newcommand{\rk}{\right]}
\newcommand{\lt}{\left }
\newcommand{\rt}{\right}
\newtheorem{Thm}{Theorem}
\newtheorem{Lem}[Thm]{Lemma}
\newtheorem{Propos}[Thm]{Proposition}
\newtheorem{Rmk}[Thm]{Remark}
\newtheorem{Def}[Thm]{Definition}
\theoremstyle{remark}
\numberwithin{equation}{section}
\begin{document}

\title{Stochastic wave equation with additive fractional noise:  Solvability and global H\"older continuity}

\author{Shuhui Liu}
\address{Department of Applied Mathematics, The Hong Kong Polytechnic University, Hong Kong SAR, China}
\email{shuhui.liu@polyu.edu.hk}

\author{Yaozhong Hu}
\address{Department of Mathematical and Statistical Sciences, University of Alberta, Edmonton, AB T6G 2G1, Canada}
\email{yaozhong@ualberta.ca}
\thanks{YH was supported by the NSERC discovery fund and a startup fund of University of Alberta. }

\author{Xiong Wang}
\address{School of Mathematics, Sun Yat-sen University, Guangzhou, 510275, China}
\email{xiongwang@ualberta.ca (corresponding)}
\date{}

\subjclass[2010]{Primary 60H15; secondary 60H05, 60G15, 60G22}
\keywords{Stochastic wave equation,  rough fractional  noise, solvability, global H\"older continuity, Talagrand's majorizing measure theorem, upper and lower bounds of stochastic processes}

\maketitle

\begin{abstract}
We determine the range of Hurst parameters that provide the necessary and sufficient conditions for the solvability, in $L^2(\Omega)$, of the stochastic wave equation: $ \frac{\partial^2  }{\partial t^2}u(t,x) =\Delta u(t,x)+\dot{W}(t,x)$,  where $\{  W(t,x),\ t\ge 0, x\in \RR^d\} $ is a fractional Brownian field with temporal Hurst parameter  $H_0\in[\tfrac12,1]$ and spatial Hurst parameters  $H_i\in(0,1)$ for $i=1,\cdots,d$. {In particular, the solvability condition exhibits a phase transition at $H_0 = 1$.} We also obtain the sharp growth rate and the sharp H\"older continuity of the solution on the real line in the case $H_0=1/2$.
\end{abstract}
%\subjclass[2010]{Primary 60H15; secondary 60H05, 60G15, 60G22}
%\date{}

\section{Introduction}
In this paper, we consider the following stochastic   wave  equation (SWE) in any dimension, driven by  additive   Gaussian noise, which is both fractional in time and space with Hurst parameters $H_0\ge 1/2$ and $ H=( H_1,  \cdots, H_d)\in (0, 1)^d$ respectively:
\begin{equation}\label{eq.SWE}
\begin{cases}
  \frac{\partial^2 u(t,x)}{\partial t^2}=\Delta u(t,x)+\dot{W}(t,x),\quad   t\ge 0,\quad x\in\RR^d\,, \\
  u(0,x)=u_0(x)\,,\quad\frac{\partial}{\partial t}u(0,x)=v_0(x)\,, \quad  x\in\RR^d
\end{cases}
\end{equation}
where $\Delta =\sum_{i=1}^d \frac{\partial ^2}{\partial x_i^2}$ is the Laplacian. 
In the above equation, $W(t,x)$ is a centered Gaussian random field  with covariance given by
\begin{equation}\label{CovW}
  \EE[W(t,x)W(s,y)]=R_{H_0}(t, s)\prod_{i=1}^d   R_{H_i}(x_i, y_i)\,, 
\end{equation}
where the function $R_{a  }(\xi , \eta )$ is defined as 
\begin{equation}
R_{a  }(\xi , \eta )=\frac12 \left(|\xi|^{2a}+|\eta|^{2a} -|\xi-\eta|^{2a}\right) \,,\quad a\in (0, 1),\ \  \xi, \eta\in \RR \,.
\end{equation}
Formally we write $\dot W(t,x)=\frac{\partial ^{d+1}}{\partial t\partial x_1\cdots \partial x_d} W(t,x)$, then the covariance of the noise is
\begin{equation}\label{CovdotW}
  \EE[\dot  W(t,x)\dot W(s,y)]=\phi_{H_0}(t-s) \prod_{i=1}^d  \phi_{H_i}(x_i, y_i)\,,
\end{equation}
where
\begin{equation}
   \phi_{a}(x - y)= \mathfrak{c}_a  |x-y|^{2a-2}  \,,\quad\text{with }\mathfrak{c}_a:=a(2a-1) \,, a\in (0, 1),\ \  x, y\in \RR \,.
\end{equation}

We denote the Green's function associated with \eqref{eq.SWE}, i.e., the wave kernel by $G_t(x-y)$ and we also  use the following notation
    \begin{align}\label{def.I_0}
		I_0(t,x):= &G_t\ast v_0(x)
		+\frac{\partial}{\partial t} G_t\ast u_0(x) \,.
%		\nonumber \\
%		=&\frac 12\int_{x-t}^{x+t}v_0(y)dy+\frac{1}{2}[u_0(x+t)+u_0(x-t)]\,,
 \end{align}
Then the  solution    to
\eqref{eq.SWE} can be written explicitly as
\begin{align}
     u(t,x)
%     &=\frac{\partial}{\partial t} G_t\ast u_0(x)+G_t\ast v_0(x)+G_t\circledast %\sigma(\cdot,\cdot,u)(x) \nonumber\\
     &=I_0(t,x) +\int_{0}^{t}\int_{\RR}G_{t-s}(x-y)W(ds,dy)  \label{eq.MildSol}
    \end{align}
if the above stochastic integral exists.  
To focus on the stochastic part, we assume $u_0=0$ and $v_0=0$. Thus,  the  resulting   solution is written as
\begin{equation}\label{LinearMildSol}
	u(t,x)=  \int_{0}^{t}\int_{\RR} G_{t-s}(x-y) W(ds,dy)
\end{equation}
if the above stochastic integral (with deterministic integrand) exists.

Our first main result in  this paper is to identify the necessary and sufficient conditions on the Hurst parameters $H_0$ and $H=(H_1,  \cdots, H_d)$  so that  the above stochastic integral \eqref{LinearMildSol}  exists  (as a mean-zero Gaussian with finite variance), thereby 
 	completely characterizing the solvability of equation \eqref{eq.SWE}. 
 
More specifically, our first main result in this paper  is encapsulated in the following theorem:
\begin{Thm}\label{t.iff}   
Let $H_0\in [\tfrac{1}{2},1]$ and $H_i\in (0,1)$ for $i=1,\cdots,d$. Denote $|H|:=H_1+\cdots+H_d$.
	The necessary and sufficient conditions for the existence of  \eqref{LinearMildSol} 
	as a finite variance Gaussian variable (namely, the solvability of \eqref{eq.SWE}) are as follows:\begin{equation}
		\begin{cases}
			|H|>d-1 &\qquad \hbox{if $H_0=1/2$}\,;\\
			|H|>d-2&\qquad \hbox{if $H_0=1 $}\,;\\
			|H|+H_0>d-1/2&\qquad \hbox{if $1/2<H_0<1 $}\,.\\
		\end{cases}\label{e.1.iff}
	\end{equation}
\end{Thm}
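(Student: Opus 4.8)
The plan is to compute the second moment of the stochastic integral in \eqref{LinearMildSol} via Fourier analysis and determine exactly when it is finite. Using the spectral representation of the covariance \eqref{CovdotW}, the spatial correlation $\prod_i \phi_{H_i}(x_i,y_i)$ has a spectral density proportional to $\prod_{i=1}^d |\xi_i|^{1-2H_i}$, while the temporal correlation $\phi_{H_0}(t-s)$ has spectral density $c_{H_0}|\tau|^{1-2H_0}$ (or is simply $\delta(t-s)$ when $H_0=1/2$). Writing $u(t,x)$ as a Wiener integral against the spectral measure, one obtains
\begin{equation}
\EE|u(t,x)|^2 = c_H \int_0^t\!\!\int_0^t \phi_{H_0}(s-r)\!\int_{\RR^d}\! \widehat{G}_{t-s}(\xi)\,\overline{\widehat{G}_{t-r}(\xi)}\prod_{i=1}^d|\xi_i|^{1-2H_i}\,d\xi\,dr\,ds\,,
\end{equation}
with $\widehat{G}_t(\xi)=\sin(t|\xi|)/|\xi|$. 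The finiteness of this integral is what must be characterized.

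The key steps are as follows. First, I would handle the temporal integration: in the white-in-time case $H_0=1/2$ the $r$-integral collapses and one is left with $\int_0^t \int_{\RR^d} \frac{\sin^2((t-s)|\xi|)}{|\xi|^2}\prod_i|\xi_i|^{1-2H_i}\,d\xi\,ds$; for $H_0>1/2$ one does the double time integral first, obtaining a kernel behaving like $|s-r|^{2H_0-2}$ against $\widehat G_{t-s}\widehat G_{t-r}$. Second, I would split the spatial integral into the region near the origin $\{|\xi|\le 1\}$ and the region at infinity $\{|\xi|\ge 1\}$. Near infinity the factor $\sin^2/|\xi|^2$ (or its analogue) decays like $|\xi|^{-2}$ and, combined with $\prod_i |\xi_i|^{1-2H_i}$ which has homogeneity degree $d-2|H|$, convergence at infinity requires $d-2|H|-2<-d$, i.e. no constraint beyond $|H|$ not too small — actually the binding constraint comes from the \emph{low-frequency} region because $\sin^2(c|\xi|)/|\xi|^2 \sim c^2$ as $\xi\to 0$, so near the origin the integrand behaves like $\prod_i|\xi_i|^{1-2H_i}$, which is \emph{always} locally integrable since $1-2H_i>-1$. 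Hence for fixed $t$ the subtlety is really at high frequencies where one must exploit the oscillation of $\sin^2((t-s)|\xi|)$ after integrating in $s$ (or in $s,r$): integrating $\int_0^t \sin^2((t-s)|\xi|)\,ds$ produces a gain, effectively replacing one power, and a careful change of variables $\xi_i = \rho\,\omega_i$ reduces the question to convergence of a one-dimensional radial integral whose exponent is precisely $d-1-2|H|$ (white case), $d-2-2|H|$ (case $H_0=1$), and $d-1/2-2|H|-H_0$ (genuinely fractional case) — matching the three regimes in \eqref{e.1.iff}.

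The main obstacle I expect is the sharp treatment of the oscillatory integral $\int_{\RR^d}\widehat G_{t-s}(\xi)\widehat G_{t-r}(\xi)\prod_i|\xi_i|^{1-2H_i}\,d\xi$ together with the time integration, because one cannot simply bound $\sin^2$ by $1$ — that would lose the decay needed for convergence and give the wrong (non-sharp) condition. The cancellation in $\sin(a|\xi|)\sin(b|\xi|) = \tfrac12[\cos((a-b)|\xi|)-\cos((a+b)|\xi|)]$ must be used, and in the fractional-in-time case this interacts nontrivially with the $|s-r|^{2H_0-2}$ kernel; one typically passes to the identity $\int_0^t\int_0^t |s-r|^{2H_0-2}\widehat G_{t-s}(\xi)\widehat G_{t-r}(\xi)\,ds\,dr \asymp |\xi|^{-2}\big(\text{something like } |\xi|^{1-2H_0}\wedge t^{\,\cdot}\big)$ after scaling, which yields the exponent $1-2H_0$ correction. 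Establishing the matching lower bound (to get necessity, not just sufficiency) requires showing the relevant integrand is genuinely positive on a set of full measure and that the oscillatory cancellation does not conspire to make the integral finite when the exponent count says otherwise — this is done by restricting $\xi$ to a cone where all $\sin$ factors are bounded below, e.g. $|\xi_i|\asymp \rho/\sqrt d$ and $(t-s)\rho \in [\pi/4,3\pi/4] \pmod \pi$. Once the radial exponent is isolated the three cases of \eqref{e.1.iff} follow by elementary integrability of $\rho^{k}\,d\rho$ near $\rho=\infty$.
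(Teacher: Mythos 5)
Your overall route --- compute $\EE|u(t,x)|^2$ by Plancherel, reduce to a radial integral at high frequency, and treat $H_0=1/2$, $H_0=1$, $1/2<H_0<1$ separately --- is the same as the paper's, and for the two endpoint cases it would go through: the time integration produces $\frac t2-\frac{\sin(2t|\xi|)}{4|\xi|}$, respectively $\frac{(1-\cos(t|\xi|))^2}{|\xi|^2}$, both nonnegative and bounded above and below by constants (resp.\ by constants times $|\xi|^{-2}$) for $|\xi|\gtrsim 1/t$, so power counting at infinity gives $|H|>d-1$ and $|H|>d-2$. The genuine gap is the case $1/2<H_0<1$, which is where all the difficulty of Theorem \ref{t.iff} sits. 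After the scaling $s|\xi|\to s$, $r|\xi|\to r$, everything hinges on the two-sided asymptotics $g(\rho):=\int_0^\rho\int_0^\rho \sin s\,\sin r\,|s-r|^{2H_0-2}\,ds\,dr\asymp\rho$ as $\rho\to\infty$; this is precisely your asserted ``identity'' $\asymp|\xi|^{-2}\bigl(|\xi|^{1-2H_0}\wedge\cdots\bigr)$, but you never prove it, and it is not routine: the paper obtains it via the product-to-sum formula, two integrations by parts, and the large-argument asymptotics of the generalized hypergeometric function ${}_1F_2$. Moreover, your proposed route to the matching lower bound --- restricting $\xi$ to a cone and the times to $(t-s)\rho\in[\pi/4,3\pi/4]\ \mathrm{mod}\ \pi$ so that ``all $\sin$ factors are bounded below'' --- fails for $1/2<H_0<1$: the temporal integrand $\sin s\,\sin r\,|s-r|^{2H_0-2}$ changes sign, so restricting the $(s,r)$-domain to a region where it is positive does \emph{not} bound the full double time integral from below (only the restriction in $\xi$ is legitimate, since for each fixed $\xi$ the time double integral is nonnegative by positive definiteness of the kernel $|s-r|^{2H_0-2}$). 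The necessity in this regime really requires $g(\rho)\gtrsim\rho$, which must come from the same asymptotic analysis as the upper bound.

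Two smaller inaccuracies: the radial exponents you quote ($d-1-2|H|$, $d-2-2|H|$, $d-1/2-2|H|-H_0$) do not reproduce \eqref{e.1.iff} under the criterion that $\int_1^\infty\rho^{e}\,d\rho<\infty$ iff $e<-1$; e.g.\ in the white case the correct exponent is $2d-2|H|-3$ (Jacobian $\rho^{d-1}$ times $\rho^{-2}$ times $\rho^{d-2|H|}$ from $\prod_i|\xi_i|^{1-2H_i}$), so the bookkeeping needs redoing even at the heuristic level. Also, the claim that bounding $\sin^2\le1$ ``would give the wrong condition'' is false for $H_0=1/2$: there $\int_0^t\sin^2(s|\xi|)\,ds\le t$ already yields the sharp condition, and it is only for $H_0=1$ and $H_0\in(1/2,1)$ that the temporal oscillation must be exploited; relatedly, your remark that the binding constraint comes from the low-frequency region contradicts your own (correct) observation that $\prod_i|\xi_i|^{1-2H_i}$ is always locally integrable near $\xi=0$.
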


\begin{figure}[htb]
\centering
\begin{tikzpicture}[scale=1.05,>=stealth]
  % Axes
  \draw[->] (0,0) -- (6.8,0) node[below right] {$H_0$};
  \draw[->] (0,0) -- (0,7.2) node[left] {$|H|$};

  % Tick labels for H0 = 1/2 and 1 (schematic positions)
  \draw (2,0) -- (2,-0.08) node[below] {$\tfrac12$};
  \draw (6,0) -- (6,-0.08) node[below] {$1$};

  % Horizontal reference levels (labels use d symbolically)
  % Place |H| = d-1 at y=3.2 and |H| = d-2 at y=2.0 (schematic heights)
  \draw[densely dashed,gray!70] (0,6) -- (2,6);
  \node[anchor=east,gray!70] at (-0.1,6) {$d-1$};

  \draw[densely dashed,gray!70] (0,1.5) -- (6,1.5);
  \node[anchor=east,gray!70] at (-0.1,1.5) {$d-2$};
  
  \draw[densely dashed,gray!70] (0,3) -- (6,3);
  \node[anchor=east,gray!70] at (-0.1,3) {$d-\tfrac32$};

  % Oblique boundary: |H| + H0 = d - 1/2, drawn schematically from H0=1/2 to 1
  % Choose points so that at H0=1/2 (x=2) the height meets "d-1" (y=3.2),
  % and as H0->1- (x->6-) it approaches "d-1/2-1" placed here at y=2.5.
  \coordinate (A) at (2,6);       % (H0=1/2, |H|=d-1)
  \coordinate (B) at (6,3);       % (H0=1-,  |H|=d-1/2-1) schematic
  \draw[very thick] (A) -- (B);

  % Endpoints and discontinuity markers
  \fill (A) circle (1.6pt);         % closed dot at H0=1/2
  \draw[thick] (B) circle (1.6pt);  % open dot at the oblique limit H0->1-

  % True boundary at H0 = 1 is |H| = d-2: filled dot
  \fill (6,1.5) circle (1.6pt);

  % Labels for the three cases
  \node[anchor=west] at (0.25,6.35) {$|H|>d-1$};
  \node[anchor=west] at (0.8,4) {$|H|+H_0>d-\tfrac12$};
  \node[anchor=west] at (6.1,1.5) {$|H|>d-2$};

  % Shaded solvable region: above oblique line on (1/2,1), and above y=d-2 at H0=1
  % (i) a polygon above the oblique line
  \path[fill=blue!30, opacity=0.3]
    (2,6) -- (6,3) -- (6,6.8) -- (2,6.8) -- cycle;

  % (ii) a thin vertical strip at H0=1 above y=d-2
  \path[fill=red!30, opacity=0.3]
    (5.95,1.5) -- (6.05,1.5) -- (6.05,6.8) -- (5.95,6.8) -- cycle;
    
  % (iii) a thin vertical strip at H0=1 above y=d-1
  \path[fill=red!30, opacity=0.3]
    (1.95,6) -- (2.05,6) -- (2.05,6.8) -- (1.95,6.8) -- cycle;

  % Axis braces for clarity (optional)
  \draw[decorate,decoration={brace,amplitude=4pt},gray!60]
    (1.95,0.2) -- (6.05,0.2) node[midway,yshift=10pt,gray!60]{\(\tfrac12<H_0<1\)};
\end{tikzpicture}
\caption{The solvability region of Thorem \ref{t.iff}}
\end{figure}

{%\red 
The model \eqref{eq.SWE} has been extensively studied in the literature. For the SWE driven by (colored) additive noise, \cite{BalanSPA10} extended Dalang's random field framework \cite{DalangEJP99} from white-in-time noise to temporally fractional noise with $H_{0}\in(1/2,1)$ and spatially homogeneous fractional covariance with $H_{i}>1/2, i=1,\dots,d$. In particular, they established that the condition $|H| + H_{0} > d - \tfrac12$ in \eqref{e.1.iff} is necessary and sufficient for solvability. For the one-dimensional SWE driven by multiplicative noise, the results in \cite{BJQ2015,BJQSPL16,LuisSPA20,LHW2022} showed that $H > 1/4$ is the exact threshold for well-posedness. For nonlinear SWEs with additive noise, the works \cite{Deya19,DeyaSPA22} likewise demonstrated that the condition $|H| + H_{0} > d - \tfrac12$ in \eqref{e.1.iff} characterizes the existence of function-valued solutions. We also remark that, to the best of our knowledge, the discontinuity phenomenon at $H_{0}=1$ observed in this work has not been reported.}

The detailed proof of Theorem \ref{t.iff} is given in Section \ref{s.2}. The argument essentially reduces to determining whether a certain elementary multiple integral, such as \eqref{est1} in the next section, is convergent or not.  
    The cases $H_0=1/2$ (time white) or $H_0=1$ (time independent) are relatively straightforward. However, evaluating this multiple integral in the case $1/2<H_0<1$ is much more sophisticated since we aim to derive the necessary and sufficient condition. By complex computations, we reduce the analysis of this multiple integral to analyzing the behavior of another integral $g_1(\rho)$ (see equation \eqref{g_1} below) as $\rho$ tends to infinity.  By utilizing the asymptotic of the generalized hypergeometric function ${}_1F_2$, we are ultimately able to determine the exact range of the Hurst parameter for which the concerned integral converges. 
	 It is worth noting that,  if we formally let $H_0=1$ in the third condition of  \eqref{e.1.iff}, we get $|H|>d-3/2$, which differs from the second condition $|H|>d-2$ given there.  This reveals an interesting discontinuity of the solvability condition at $H_0 = 1$ in \eqref{e.1.iff}.  The reason for this discontinuity is that, as $\rho\to\infty$, $g_1(\rho)\asymp \rho$ when $1/2<H_0<1$, but $g_1(\rho)$ is no longer of the order $\rho$ when $H_0=1$. 
	 This gives an explanation of the discontinuity of the solvability condition at \eqref{e.1.iff} when $H_0 = 1$.

Let us mention that for the stochastic heat  equation (SHE)
with additive noise (when $\frac{\partial ^2}{\partial t^2}$  in \eqref{eq.SWE}  is replaced by $\frac{\partial  }{\partial t}$ )   a necessary and sufficient 
condition has been found in \cite{hlt2019}  
for quite a general class of Gaussian noises.
  Extending this result to   SWE presents a  challenge, primarily due to the oscillatory nature of the Fourier transform   $\hat G_t(\xi)=\frac{\sin (t|\xi|)}{|\xi|}$  of the wave kernel compared to     the Fourier transform for  the heat kernel, which is always positive. This oscillatory nature requires much more delicate analysis about the convergence and divergence of the concerned oscillatory integrals.   
  
For this reason, we assume in this paper that the noise is a fractional one, and the temporal Hurst parameter is assumed to be greater than or equal to   $1/2$, and the spatial Hurst parameters   $(H_1,\cdots,H_d)\in(0,1)^d$ can be arbitrary. 
 For this range of Hurst parameters, we  refer to two recent papers \cite{CH2022,HLW2022} that provide a necessary and sufficient condition for the parabolic Anderson model (SHE with multiplicative noise) to be solvable.

Upon establishing the well-posedness for equation \eqref{eq.SWE}, our next objective is to derive some precise properties for the solution 
$u(t,x)$.  Inspired by the results of \cite{HW2021}, we aim to ascertain the growth property and the temporal and spatial H\"older continuities of the solution on the entire $\RR^d$. More specifically, we want to know the sharp growth rate of $\sup_{0\le t\le T, |x|\le L} |u(t,x)|$ in terms of $T$ and $L$ as $T, L\to \infty$.  It is known that $u(t,x)$ is  H\"older continuous in $t$ and $x$.  Namely, {
%\blue 
there are $\al$ and $\beta$ such that
in any bounded domain $D\subseteq \RR_+\times \RR^d$}  \begin{equation*}
	|u(t,x)-u(s,y)|\le   C_D \left[|t-s|^\beta +|x-y|^\al\right]   \,,\quad \forall \ (t,x)\in D 
\end{equation*} 
for some finite positive (random) constant $C_D$.  We would like to determine the optimal exponents $\al$ and $\beta$.  Specifically, we seek
$\al$ and $\beta$ such that
\begin{equation}
	|u(t,x)-u(s,y)|\asymp   C_D \left[|t-s|^\beta +|x-y|^\al\right]  \label{e.1.12}
\end{equation} 
for some finite positive (random) constant $C_D$.
Obviously, the   constant $C_D$ should depend  on the domain
$D$. %\tcb
{We are particularly   interested in understanding dependence of  $C_D$ on the diameters  of $D$ as  $D$ approaches $\RR^d$, namely, the global H\"older continuity.}  More precisely, we only need to consider  the  domain of the form $D=[0, T]\times \{x\in  \RR^d\,; |x|\le L\}$ and we want to know how the constant $C_D$ 
grows as $T$ and $L$ go to infinity.    Since we are concerned with  the equivalence such as  \eqref{e.1.12} instead of    only the upper bound, which is much harder,
% than obtaining the usual H\"older continuity, 
we have   succeeded only in the case $d=1$ and $H_0=1/2$ 
(one dimensional and time white case) thus far. While the method is expected to apply to higher-dimensional settings and to a broader class of Gaussian noises, a detailed treatment of these cases is deferred to future work.

It is also interesting to take the expectation in \eqref{e.1.12}.  Thus, we have three results in this paper. The first result pertains to the growth rate, presented in both mean and almost surely forms, and is described in the following theorem.   

\begin{Thm}\label{LUEESup}
 Assume $d=1$ and $H_0=1/2$.
Let the Gaussian field $u(t,x)$    be the solution to \eqref{eq.SWE} with  $u_0(x)=0$ and $v_0(x)=0$.    Then, the following conclusions hold.
    \begin{enumerate}
    \item[(1)] There exist two (strictly) positive
    constants $c_H$ and $ C_H$, independent of $T$ and $L$,   such that
\begin{align}
c_H \, \Phi(T, L)&\le
\EE \lk\sup_{{0\le t\le T\atop  -L\le x\le L}}  u (t,x) \rk\nonumber\\
&\le \EE \lk\sup_{{0\le t\le T\atop  -L\le x\le L}} |u (t,x)|\rk\le C_H  \, \Phi(T, L)\,,
 \label{SupEEasmp}
\end{align}
where
\begin{align}\label{def_phi0}
\Phi_0(T, L):=
\begin{cases}
1+\sqrt{\log_2\lc L/T \rc}\,, &L> T \\
1, &L\leq T
\end{cases}
\end{align}
and
\begin{align}
\  \Phi(T, L) =
% \begin{cases}
T^{\frac{1}{2}+H}\Phi_0(T, L).
 %&\text{if $L> T$},  \\
%      T^{\frac{1}{2}+H} & \text{if $L < T$}\,.
%    \end{cases}
\label{e.def_rho}
\end{align}
 \item[(2)]
     There exist two (strictly) positive random constants $c_H$ and $C_H$, independent of $T$ and $L$,  such that almost surely
    \begin{align}\label{Supasmp}
    c_H\,\Phi(T, L)  &\le   \sup_{(t,x)\in\Upupsilon(T,L)}  u (t,x)   \\
    &\leq  \sup_{(t,x)\in\Upupsilon(T,L)} |u (t,x)|\le C_H\,
        \Phi(T, L)   \,, \nonumber
    \end{align}
  where $\Upupsilon(T,L)
  =\{(t,x)\in[0,T]\times[-L,L]~:~L> T\}$\,. 
    \end{enumerate}
\end{Thm}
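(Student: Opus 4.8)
The plan is to prove Theorem~\ref{LUEESup} by first establishing the sharp two-sided bound \eqref{SupEEasmp} on the expected supremum and then transferring it to the almost-sure statement \eqref{Supasmp} via concentration. The natural tool for a centered Gaussian field is Talagrand's majorizing measure theorem (as flagged in the keywords), which says that $\EE[\sup u(t,x)]$ is comparable, up to universal constants, to the $\gamma_2$-functional of the index set $[0,T]\times[-L,L]$ under the canonical metric $d\blc (t,x),(s,y)\brc = \blc\EE|u(t,x)-u(s,y)|^2\brc^{1/2}$. So the core analytic task is to understand this canonical metric well enough to compute the associated $\gamma_2$-functional (equivalently, to build an efficient chaining/admissible-sequence argument). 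To that end, I would first compute, for $d=1$, $H_0=1/2$ and $H<1/2$, the variance $\EE[u(t,x)^2]$ and more generally the increment variances, using the spectral/Fourier representation $\EE[u(t,x)u(s,y)] = c_H\int_0^{t\wedge s}\int_\RR \widehat G_{t-r}(\xi)\overline{\widehat G_{s-r}(\xi)} e^{i\xi(x-y)}|\xi|^{1-2H}\,d\xi\,dr$ with $\widehat G_r(\xi)=\sin(r|\xi|)/|\xi|$. The scaling $\widehat G_r(\xi)=r\,\widehat G_1(r\xi)$ should yield a clean self-similarity: $\EE[u(t,x)^2]\asymp t^{1+2H}$ and, crucially, a bound of the form $d\blc(t,x),(s,y)\brc \lesssim |t-s|^{1/2+H} + \min\{|x-y|^H, T^H\}\,T^{1/2}$ (with a matching lower bound on the relevant scales), together with the key flat direction: once $|x-y|\gtrsim T$ the field has essentially decorrelated in space. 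This is what produces the logarithmic factor $\sqrt{\log_2(L/T)}$ — the interval $[-L,L]$ contains roughly $L/T$ "independent" blocks of width $\sim T$, and the maximum of $\sim N$ comparable Gaussians grows like $\sqrt{\log N}$.

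Concretely, for the upper bound in \eqref{SupEEasmp} I would split the rectangle into $\sim L/T$ sub-rectangles of spatial width $T$; on each the field has diameter $\asymp T^{1/2+H}$ in the canonical metric and its $\gamma_2$-functional is $O(T^{1/2+H})$ by a routine dyadic chaining in the two variables $t$ and $x$ (this is where the Hölder exponents $1/2+H$ in time and $H$ in space enter); then I would combine the blocks using the standard fact that $\EE\sup_{k\le N}\sup_{B_k}$ is controlled by $\max_k \EE\sup_{B_k} + (\text{diam}) \sqrt{\log N}$, giving the bound $C_H T^{1/2+H}(1+\sqrt{\log_2(L/T)})$ when $L\ge T$ and $C_H T^{1/2+H}$ when $L<T$. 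For the lower bound I would exhibit a well-separated subset: pick $\sim L/T$ points, one in each block, spaced so that their pairwise canonical distances are all $\asymp T^{1/2+H}$ (using the decorrelation at spatial scale $T$), and invoke Sudakov minoration, $\EE\sup \gtrsim \varepsilon\sqrt{\log N}$ for an $\varepsilon$-separated set of size $N$; the pure power $T^{1/2+H}$ lower bound comes from a single point's variance. The hard part will be making the canonical-metric estimates genuinely two-sided and uniform in $T$ and $L$ — in particular, establishing the precise spatial decorrelation at scale $\sim T$ for rough noise $H<1/2$, since the negative spectral exponent $|\xi|^{1-2H}$ makes the covariance integrals delicate and the naive triangle-inequality bound on $d$ is too lossy to capture the saturation of the spatial increment at $\asymp T^{1/2+H}$.

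For part~(2), the almost-sure statement, I would use Gaussian concentration (Borell–TIS): $\sup_{\Upupsilon(T,L)} u$ concentrates around its mean $\EE\sup \asymp \Phi(T,L)$ with fluctuations governed by $\sigma_{T,L} := \sup \blc\EE u^2\brc^{1/2} \asymp T^{1/2+H}$, which is of smaller or equal order than $\Phi(T,L)$ and, more importantly, has summable Gaussian tails once we let $T,L$ range over a discrete exhausting sequence (e.g. dyadic values). A Borel–Cantelli argument along such a sequence, combined with monotonicity of the supremum in $T$ and $L$ to fill the gaps, upgrades the in-expectation bounds to almost-sure bounds with random constants $c_H, C_H$ that do not depend on $T$ or $L$; the one-sided lower bound $c_H\Phi(T,L)\le \sup u$ follows the same way from the lower bound in \eqref{SupEEasmp} and concentration of $\sup u$ below its mean. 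I expect this last part to be essentially routine once \eqref{SupEEasmp} and the variance bound $\sigma_{T,L}\lesssim \Phi(T,L)$ are in hand.
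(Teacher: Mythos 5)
Your overall architecture coincides with the paper's: a two-sided estimate on the canonical metric obtained from the spectral representation, Talagrand's majorizing measure theorem with a dyadic partition of $[0,T]\times[-L,L]$ for the upper bound, Sudakov minoration on roughly $L/T$ spatially separated points for the lower bound, and Borell's inequality plus Borel--Cantelli for the almost sure version. However, the metric estimate you present as the crucial input is wrong in the time direction. The paper's Lemma \ref{LemMetricProp} shows
\begin{equation*}
d_1((t,x),(s,y)) \asymp (s\wedge t)^{1/2}\bigl[\,|x-y|^H \wedge (t\wedge s)^H\bigr] + (s\vee t)^{1/2}|t-s|^H,
\end{equation*}
so at times of order $T$ a temporal increment costs $T^{1/2}|t-s|^H$, not $|t-s|^{1/2+H}$: your claimed bound $d\lesssim |t-s|^{1/2+H}+\min\{|x-y|^H,T^H\}T^{1/2}$ underestimates $d$ (take $x=y$, $s\approx t\approx T$, $|t-s|$ small) and cannot be proved. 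The final chaining bound happens to survive, because dyadic chaining in time with the correct modulus $T^{1/2}|t-s|^H$ still sums to $O(T^{1/2+H})$, but as written your key lemma is false, so the derivation of the upper bound from it is not legitimate as stated. Moreover, the step you yourself flag as ``the hard part'' --- the matching lower bound, in particular the saturation $d\gtrsim (t\wedge s)^{1/2+H}$ once $|x-y|\gtrsim t\wedge s$, which is exactly what feeds Sudakov --- is where the paper spends most of its effort (a frequency-band argument restricting to the sets $E(\xi)$ where $\cos(|x-y||\xi|)\in(0,1/2)$, with the oscillatory remainder controlled by a term of order $s^{2H+1}\alpha_H^{2H+1}/R^{2H+1}$ and $R$ taken large); your proposal leaves this unproved, so the lower bound in part (1), and hence all of part (2), rests on an unestablished estimate.

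For part (2) there is an additional caveat: Borell's inequality gives deviation probabilities $\exp(-c\lambda^2/\sigma^2)$ with $\sigma\asymp T^{1/2+H}$ and $\lambda\asymp \Phi(T,L)$, and these are summable along a discrete sequence only if $\log_2(L/T)$ grows along it; with dyadic $T$ and $L\asymp T$ the exponent stays bounded and Borel--Cantelli does not apply directly. The paper handles this by taking $T=n^{\alpha}$ and $L\geq n^{(1+\varepsilon)\alpha}$, so the probabilities decay like $n^{-\alpha\varepsilon c_H}$, and then covers the remaining parameter range using monotonicity of the supremum; you would need the analogous restriction rather than ``dyadic values'' in both variables.
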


{%\red  
To compare the above result concerning the corresponding result  for  SHE (i.e., $\partial_{tt}$ replaced by $\partial_t$ in model \eqref{eq.SWE}), it is worth noting that in \cite[Theorem 1.1]{HW2021}, $\Phi_0(T, L)=1+\sqrt{\lt(\log_2\lc L/\sqrt{T} \rc\rt)^+}$ and $\Phi(T, L)= T^{\frac{H}{2}}\Phi_0(T, L) $, which differentiate   from the corresponding quantities in current  Theorem \ref{LUEESup}. In addition, \cite{KKXAOP17} shows that for any $t>0$, $\limsup\limits_{|x|\to \infty}\frac{u(t,x)}{\|u(t,x)\|_{L^2(\Omega)}\sqrt{\log_2|x|}}=\sqrt{2}$ almost surely where $u$ is the solution to SHE. We also remark that the global spatial behavior of the solution of SHE/SWE driven by additive noise is closely related to the intermittency properties of parabolic/hyperbolic Anderson models studied in \cite{balan2022,BalanConusAOP16,BJQ2017,ConusAOP13,HW2022,KKXAOP17}.
}

% \end{Rmk}

{%\blue 
Next, we aim to prove the global H\"older continuity with exponent $H-\epsilon$ for any $\epsilon>0$ of the solution in the spatial variable for all $t> 0$.} 
\begin{Thm}\label{LUHolderEESup}  Assume $d=1$ and $H_0=1/2$.
	Let $u (t,x)$  be the solution to \eqref{eq.SWE} with $u_0(x)=0$ and $v_0(x)=0$. Denote
	\[
	\Delta _h u (t,x):=u (t,x+h)-u (t,x)\,.
	\]
 Then for any  given $0<\theta<H$, there are (strictly) positive constants $c$, $c_H$
	and $C_{H, \theta}$ such that    the following inequalities hold  true for all
	 $L> T>0$ and    $0<|h|\leq c(t\wedge1)$:
\begin{align}
  c_{H}\,t^{\frac12}|h|^{H} \Phi_0(t, L)&\leq \EE \lk\sup_{-L\le x\le L} \Delta _h u (t,x) \rk \label{SupHolderEEasmp}\\
       & \leq \EE \lk\sup_{-L\le x\le L} |\Delta _h u (t,x)|\rk\leq C_{H,\theta}\,  t^{H-\theta+\frac12} |h|^{\theta} \Phi_0(t, L).  \nonumber
	\end{align}
 Moreover,  there are two (strictly) positive  random constants $c_{H}$ and
    $C_{H,\theta} $ such that it holds almost surely
\begin{align}
	c_{H}\,  t^{\frac12}|h|^{H} \Phi_0(t, L)
	 &\leq \sup_{-L\le x\le L}   \Delta _h u (t,x)
\label{SupasmpHolder}  \\
	 &\leq \sup_{-L\le x\le L} \left| \Delta _h u (t,x) \right| \leq
    C_{H,\theta}\,  t^{H-\theta+\frac12} |h|^{\theta} \Phi_0(t, L)
\nonumber
	\end{align}
	for all
	$L> T>0$ and    $0<|h|\leq c(t\wedge1)$.
\end{Thm}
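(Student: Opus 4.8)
Here $u$ is the purely stochastic solution \eqref{LinearMildSol}, which exists by Theorem~\ref{t.iff} (for $d=1$, $H_0=1/2$ the condition $|H|>d-1$ reads $H>0$). Fix $t>0$ and $h\ne0$; then the increment
\[
X_x:=\De_hu(t,x)=\frac12\int_0^t\!\!\int_\RR\Big(\1_{\{|x+h-y|<t-s\}}-\1_{\{|x-y|<t-s\}}\Big)\,W(ds,dy),\qquad x\in[-L,L],
\]
is a centered Gaussian field, and the plan is to control $\EE[\sup_{|x|\le L}X_x]$ via Talagrand's majorizing measure theorem. Writing $d_h(x,x')=(\EE|X_x-X_{x'}|^2)^{1/2}$ for the canonical metric and $\si_h^2(t)=\EE[X_x^2]$ (which is $x$-independent by stationarity), one has $\EE[\sup_{|x|\le L}X_x]\asymp\gamma_2([-L,L],d_h)$, so everything reduces to sharp two-sided estimates for $\si_h$ and $d_h$ and to a computation of $\gamma_2$. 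Since $H_0=1/2$ the noise is white in time, so by Plancherel's identity
\[
\EE[X_xX_{x'}]=c_H\int_0^t\!\!\int_\RR\frac{\sin^2((t-s)|\xi|)}{|\xi|^2}\,\big|e^{i\xi h}-1\big|^2\cos(\xi(x-x'))\,|\xi|^{1-2H}\,d\xi\,ds ,
\]
which is the object on which all estimates are built.

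First I would carry out the covariance estimates. Using $\int_0^t\sin^2((t-s)|\xi|)\,ds\asymp t$ for $t|\xi|\gs1$ (and $\asymp t^3|\xi|^2$ otherwise) together with $|e^{i\xi h}-1|^2\asymp\min(\xi^2h^2,1)$, and splitting the frequency integral over $|\xi|\ls1/t$, $1/t\ls|\xi|\ls1/|h|$ and $|\xi|\gs1/|h|$, one obtains $\si_h^2(t)\asymp t|h|^{2H}$ for all $0<|h|\le c\,t$ (the middle band dominates, using $2H-2<0$). Inserting the extra factor $|e^{i\xi(x-x')}-1|^2$ and splitting the same way yields the clean estimate
\[
d_h(x,x')\;\asymp\;\sqrt t\,\big(\min\{|h|,|x-x'|\}\big)^{H}\qquad\text{uniformly in }x,x'\in[-L,L]\ \ (0<|h|\le c\,t),
\]
i.e.\ $d_h$ behaves like the H\"older-$H$ modulus $\sqrt t\,|x-x'|^H$ at separations $\le|h|$ and saturates at $\si_h(t)\asymp\sqrt t\,|h|^H$ beyond; the saturation lower bound comes from the band $|\xi|\asymp1/|h|$, where $|e^{i\xi h}-1|^2\asymp1$, $\int_0^t\sin^2\asymp t$ (as $1/|h|\gs1/t$), and the oscillation $\cos(\xi(x-x'))$ averages to a positive constant once $|x-x'|\gs|h|$. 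I expect this step — pinning down the exact power in each regime uniformly in the three parameters $t$, $h$, $|x-x'|$ — to be the main obstacle, the rough range $H<1/2$ forcing the high-frequency part to dominate and to require care.

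With the metric understood, the expectation bounds \eqref{SupHolderEEasmp} follow the standard two-block scheme. For the upper bound, the $\ep$-covering number of $([-L,L],d_h)$ is $\asymp L\,t^{1/(2H)}\ep^{-1/H}$ for $\ep<\sqrt t\,|h|^H$ and $1$ beyond, so Dudley's entropy integral evaluates to $\ls\sqrt t\,|h|^H\sqrt{\log(L/|h|)}$; writing $\log(L/|h|)=\log(L/t)+\log(t/|h|)$ and absorbing the second term via $\sqrt{\log(t/|h|)}\le C_\theta(t/|h|)^{H-\theta}$ converts this into $\EE[\sup_{|x|\le L}|X_x|]\le C_{H,\theta}\,t^{H-\theta+\frac12}|h|^\theta\,\Phi_0(t,L)$ — this is exactly where the loss $0<\theta<H$ enters, being the price of expressing the answer with the same factor $\Phi_0(t,L)$ that appears for the field itself. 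For the lower bound, take $N\asymp L/t$ points $x_j$ spaced $\asymp t$ apart in $[-L,L]$; being pairwise at $d_h$-distance $\asymp\sqrt t\,|h|^H$, Sudakov minoration gives $\EE[\max_jX_{x_j}]\gs\sqrt t\,|h|^H\sqrt{\log N}\gs\sqrt t\,|h|^H\sqrt{\log_2(L/t)}$, and combining with $\EE[\sup_{|x|\le L}X_x]\ge\frac12\EE|X_{x_0}-X_{x_1}|\gs d_h(x_0,x_1)\gs\sqrt t\,|h|^H$ for a well-chosen pair (and using $X\overset{d}{=}-X$ to pass between the one-sided and absolute forms) gives $\EE[\sup_{|x|\le L}X_x]\gs c_H\sqrt t\,|h|^H\,\Phi_0(t,L)$.

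Finally, the almost sure statement \eqref{SupasmpHolder} is deduced from \eqref{SupHolderEEasmp} by the Borell--TIS concentration inequality
\[
\PP\Big(\big|\sup_{|x|\le L}X_x-\EE\sup_{|x|\le L}X_x\big|>\la\,\si_h(t)\Big)\le 2e^{-\la^2/2},
\]
combined with a Borel--Cantelli argument along a dyadic skeleton of the parameters $(t,h,L)$, the gaps being filled using monotonicity of $\sup_{|x|\le L}$ in $L$ and the a.s.\ continuity of $(t,h,x)\mapsto\De_hu(t,x)$ (which follows from the covariance estimates above via Kolmogorov's criterion), and the unbounded ranges of $t$ and $h$ handled with the help of the self-similarity $u(\la t,\la x)\overset{d}{=}\la^{H+\frac12}u(t,x)$ of \eqref{eq.SWE}, along the lines of the argument for the growth estimate in Theorem~\ref{LUEESup}. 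The point to keep in mind is that the slack between the fluctuation scale $\si_h(t)\asymp\sqrt t\,|h|^H$ and the stated reference $t^{H-\theta+\frac12}|h|^\theta\Phi_0(t,L)$ — produced precisely by the room $\theta<H$ and the factor $\Phi_0$ — is what renders the relevant probabilities summable, so that the residual multiplicative fluctuations are absorbed into the (strictly positive, a.s.\ finite) random constants $c_H$ and $C_{H,\theta}$.
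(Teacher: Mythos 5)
Your proposal is correct and follows the same overall architecture as the paper's proof: Plancherel-based estimates of the canonical metric of $x\mapsto\Delta_h u(t,x)$, a chaining upper bound, a Sudakov lower bound over $\asymp L/t$ points spaced $\asymp t$ apart (together with a two-point bound to produce the constant term in $\Phi_0$), and Borell's inequality plus Borel--Cantelli for the almost sure version. There are two genuine local differences worth noting. First, you establish the sharp two-sided equivalence $d_h(x,x')\asymp \sqrt{t}\,\bigl(\min\{|h|,|x-x'|\}\bigr)^{H}$ and take the $\theta$-loss only at the very end, via $\sqrt{\log(t/|h|)}\le C_\theta (t/|h|)^{H-\theta}$ applied to the intermediate bound $\sqrt{t}\,|h|^{H}\bigl(1+\sqrt{\log(L/|h|)}\bigr)$; the paper instead inserts the loss at the start through $1-\cos(h\xi)\le C_\theta (h\xi)^{2\theta}$, bounds the metric by $C_{H,\theta}\,t^{1/2}|h|^{\theta}(|x-y|\wedge t)^{H-\theta}$ as in \eqref{est_d2}, and recycles the computations of Lemma \ref{LemMetricProp} with $H$ replaced by $H-\theta$. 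Second, you run the upper bound through Dudley's entropy integral, while the paper feeds an explicit dyadic partition of $[-L,L]$ into Talagrand's majorizing measure theorem (Theorem \ref{Talagrand}). Both substitutions are harmless, and your route actually records a slightly sharper intermediate estimate before specializing to the stated form; the lower bound arguments (saturation of the metric at $\sqrt{t}\,|h|^{H}$ for separations $\gtrsim|h|$, then Sudakov) coincide in substance with the paper's. The only place where you are brief is the almost sure statement: obtaining a single pair of random constants uniformly over all $L\ge t>0$ and $0<|h|\le c(t\wedge 1)$ really does require the multi-parameter dyadic skeleton, the monotonicity/continuity interpolation, and the scaling you invoke — but the paper itself disposes of this step with one sentence referring back to the Borell/Borel--Cantelli argument of Theorem \ref{LUEESup}, so your level of detail matches its.
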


{%\blue 
We now present the final main result of our work, which concerns the global H\"older continuity with exponent $H-\epsilon$ for any $\epsilon>0$ of the solution in the time variable over the entire space $x\in\RR$ for the solution.} 
\begin{Thm}\label{LUtHolderEESup}   Suppose $d=1$ and $H_0=1/2$.
Let $u (t,x)$    be the solution to \eqref{eq.SWE} with  $u_0(x)=0$ and $v_0(x)=0$  and denote
	\begin{equation*}
	\Delta_\tau u(t,x):=u (t+\tau,x)-u(t,x) \,.
	\end{equation*}
Then for any given $0<\theta<H$, there exist (strictly) positive constants  $c$, $c_H$
	and $C_{H, \theta}$ such that
\begin{align}	
 c_{H}\, t^{1/2}\tau^{H}\Phi_0(\tau,L)&\leq \EE \lk\sup_{-L\le x\le L}  \Delta_\tau u(t,x) \rk\label{SuptHolderEEasmp}\\
    &\leq \EE \lk\sup_{-L\le x\le L} |\Delta_\tau u(t,x)|\rk\leq
     C_{H,\theta}t^{1/2}\tau^{\theta} \Phi_0(\tau,L) \nonumber
\end{align}
for $L\geq\tau>0$  and    $0<\tau\leq c( {t}\wedge1)$.
Furthermore, we have the almost sure version of the above result. This is,
\begin{align}		
c_{H}\, t^{1/2}\tau^{H}\Phi_0(\tau,L) &\leq \sup_{-L\le x\le L} \Delta_\tau u(t,x)
\label{SuptasmpHolder}\\
&\leq \sup_{-L\le x\le L} |\Delta_\tau u(t,x) |\leq
    C_{H,\theta}t^{1/2}\tau^{\theta} \Phi_0(\tau,L) \nonumber
\end{align}
holds almost surely for all $L\geq \tau>0$ and    $0<\tau\leq c( {t}\wedge1)$,  where  $c$ is a positive constant, $c_H$
	and $C_{H, \theta}$ are  two random positive constants.
\end{Thm}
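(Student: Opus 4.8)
\noindent\emph{Proof plan.}
Throughout I keep the standing assumptions $d=1$, $H_0=\tfrac12$ and $H=H_1\in(0,\tfrac12)$, and I fix $t>0$. Since in this regime the solvability condition of Theorem~\ref{t.iff} reads $|H|>d-1=0$ and is automatically satisfied, the field $u$ of \eqref{LinearMildSol} is well defined, and for each $\tau>0$ the increment $\{\Delta_\tau u(t,x):x\in[-L,L]\}$ is a centered Gaussian process in $x$. The whole argument is an application of Talagrand's majorizing measure theorem, supplemented by the Borell--Tsirelson--Ibragimov--Sudakov concentration inequality and a Borel--Cantelli step for the almost sure statement \eqref{SuptasmpHolder}; both directions of \eqref{SuptHolderEEasmp} reduce to a two-sided control of the canonical metric
\[
d_{t,\tau}(x,y):=\Big(\EE\big|\Delta_\tau u(t,x)-\Delta_\tau u(t,y)\big|^2\Big)^{1/2},\qquad x,y\in[-L,L].
\]
The first step is to record, using that the noise is white in time with spatial spectral density $c_H|\xi|^{1-2H}$, the spectral representation
\[
\EE\big[\Delta_\tau u(t,x)\,\Delta_\tau u(t,y)\big]=c_H\int_{\RR}e^{i\xi(x-y)}\,N_{t,\tau}(\xi)\,|\xi|^{1-2H}\,d\xi,\qquad \widehat G_r(\xi)=\frac{\sin(r|\xi|)}{|\xi|},
\]
with $N_{t,\tau}(\xi)=\int_0^{t+\tau}\big(\widehat G_{t+\tau-s}(\xi)-\1_{\{s\le t\}}\widehat G_{t-s}(\xi)\big)^2\,ds$, so that $d_{t,\tau}(x,y)^2=2c_H\int_{\RR}(1-\cos\xi(x-y))\,N_{t,\tau}(\xi)|\xi|^{1-2H}d\xi$. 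Everything then reduces to estimating this oscillatory integral and its value at $x=y$.

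Next I would establish three estimates that feed the chaining, all for $0<\tau\le c(t\wedge1)$. First, the variance bound $\EE|\Delta_\tau u(t,x)|^2\asymp t\,\tau^{2H}$, uniformly in $x$: split $N_{t,\tau}$ into the contribution of $s\in[t,t+\tau]$, which equals $\int_0^\tau\sin^2(r|\xi|)|\xi|^{-2}dr$ and is of lower order, and that of $s\in[0,t]$, on which the product-to-sum identity gives $\widehat G_{t+\tau-s}(\xi)-\widehat G_{t-s}(\xi)=2\cos\!\big((t+\tfrac{\tau}{2}-s)|\xi|\big)\sin(\tfrac{\tau}{2}|\xi|)|\xi|^{-1}$, so the $s$-integration contributes a factor $\asymp t$ and the leading term of $N_{t,\tau}(\xi)$ is $\asymp t\sin^2(\tfrac{\tau}{2}|\xi|)|\xi|^{-2}$; the remaining scalar integral $\int_{\RR}|\xi|^{-1-2H}\sin^2(\tfrac{\tau}{2}|\xi|)\,d\xi$ converges precisely because $0<H<1$ and equals $c_H\tau^{2H}$ by scaling. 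Second, the modulus of continuity: inserting $1-\cos\xi h\le C_\theta|\xi h|^{2\theta}$ into the same decomposition yields, for $0<|h|\le c(t\wedge1)$ and any $\theta\in(0,H)$, that $d_{t,\tau}(x,x+h)^2\lesssim t\,\tau^{2(H-\theta)}|h|^{2\theta}$; combined with the trivial $d_{t,\tau}\le(2\EE|\Delta_\tau u(t,x)|^2)^{1/2}\le C\,t^{1/2}\tau^{H}=:D$, this gives $d_{t,\tau}(x,x+h)\lesssim t^{1/2}\tau^{H-\theta}(|h|\wedge\tau)^{\theta}$. Third, the matching lower bound $d_{t,\tau}(x,y)\gtrsim t^{1/2}\tau^{H}$ whenever $|x-y|\gtrsim\tau$, which I would read off from the same kernel analysis, the point being that the noise freshly created over the window $[t,t+\tau]$ at spatial locations more than $\tau$ apart is essentially decorrelated.

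Granting these, the metric entropy of $[-L,L]$ under $d_{t,\tau}$ obeys $N(\varepsilon)\lesssim(L/\tau)(D/\varepsilon)^{1/\theta}$ for $\varepsilon\le D$. Dudley's entropy bound together with $D\asymp t^{1/2}\tau^{H}$ then produces the right-hand inequality in \eqref{SuptHolderEEasmp}: the factor $\Phi_0(\tau,L)=1+\sqrt{\log_2(L/\tau)}$ comes from the $\log(L/\tau)$ term in the entropy, and the mild enlargement of the exponent from $H$ to $\theta$ comes from the non-sharp high-frequency estimate used above, exactly as in Theorem~\ref{LUHolderEESup}. For the left-hand inequality I would pick $\asymp L/\tau$ points in $[-L,L]$ that are pairwise $\gtrsim\tau$ apart, hence pairwise at canonical distance $\gtrsim t^{1/2}\tau^{H}$, and apply Sudakov's minoration to get $\EE[\sup_x\Delta_\tau u(t,x)]\gtrsim t^{1/2}\tau^{H}\sqrt{\log(L/\tau)}$, while two separated points already give $\gtrsim t^{1/2}\tau^{H}$; together these yield $\gtrsim t^{1/2}\tau^{H}\Phi_0(\tau,L)$. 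Finally, the almost sure bounds \eqref{SuptasmpHolder} follow from \eqref{SuptHolderEEasmp} via the Borell--Tsirelson--Ibragimov--Sudakov inequality (with variance proxy $\sup_x\EE|\Delta_\tau u(t,x)|^2\asymp t\tau^{2H}$) combined with a Borel--Cantelli argument over a dyadic mesh in the parameters $(t,\tau,L)$ and the joint continuity of $(t,\tau,x)\mapsto u(t+\tau,x)-u(t,x)$; it is the uniformity over the $\tau$-mesh that makes one work with $\theta<H$ rather than $H$ in the upper bounds.

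The step I expect to be the main obstacle is the sharp analysis of the kernel $N_{t,\tau}(\xi)$. In contrast with the purely spatial increment treated in Theorem~\ref{LUHolderEESup}, here the integrand carries \emph{both} the restart cut-off $\1_{\{s\le t\}}$ \emph{and} the difference of two wave propagators, so the oscillatory integrals in $s$ and then in $\xi$ have to be estimated with matching upper and lower bounds across the three frequency ranges $|\xi|\lesssim 1/t$, $1/t\lesssim|\xi|\lesssim1/\tau$ and $|\xi|\gtrsim1/\tau$, carefully tracking the cancellation in $\widehat G_{t+\tau-s}-\widehat G_{t-s}$. Once this is in place, the remaining chaining, concentration and Borel--Cantelli steps are the machinery already developed for Theorems~\ref{LUEESup}--\ref{LUHolderEESup}.
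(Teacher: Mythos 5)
Your plan is essentially the paper's own proof: two-sided Fourier/Plancherel estimates of the canonical spatial metric of the temporal increment (with the leading kernel $t\,[1-\cos(\tau\xi)]\,|\xi|^{-2}$, i.e.\ $t\sin^2(\tau\xi/2)|\xi|^{-2}$, plus lower-order corrections of size $O(\tau^{2H+1})$), a chaining upper bound (your Dudley entropy integral is interchangeable with the paper's explicit Talagrand partition, and applying the $1-\cos x\le C_\theta|x|^{2\theta}$ trick to the spatial rather than the temporal factor even yields the slightly sharper $t^{1/2}\tau^{H}\Phi_0$, which implies the stated $\tau^{\theta}$ bound), Sudakov minoration over $\sim L/\tau$ points that are $\tau$-separated for the lower bound, and Borell plus Borel--Cantelli for the almost sure version.

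One caveat on the step you yourself flag as the main obstacle: your stated lower bound $d_{t,\tau}(x,y)\gtrsim t^{1/2}\tau^{H}$ for $|x-y|\gtrsim\tau$ is correct, but the heuristic you give for it is misattributed. The noise freshly created on $[t,t+\tau]$ contributes only $O(\tau^{2H+1})$ to the increment variance, which would give $\tau^{H+1/2}\ll t^{1/2}\tau^{H}$; the factor $t^{1/2}$ comes from the noise already laid down on $[0,t]$ being propagated by wave kernels whose times differ by $\tau$ (the $t[1-\cos(\tau\xi)]$ term, exactly the leading kernel you identified in the variance computation). To execute the lower bound one must extract $\gtrsim t\tau^{2H}$ from the oscillatory integral $\int(1-\cos\xi(x-y))\,t\sin^2(\tau\xi/2)|\xi|^{-1-2H}d\xi$ by restricting to suitable frequency bands (as the paper does), and then check that the remaining cross terms are $O(\tau^{1+2H})$, hence dominated once $\tau\le c\,t$; with that substitution your outline matches the paper's argument.
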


The above  four inequalities 
\eqref{SupHolderEEasmp}-\eqref{SuptasmpHolder}
are sharp since compared to the Brownian motion case, we believe that we can only allow   $\theta <H$ to 
be arbitrarily close to $H$  
on the right hand sides but usually it is impossible 
to allow $\theta=H$. This fact imposes that on the left
hand side we must take $\theta=H$. 

To prove the above results (Theorems \ref{LUEESup}-\ref{LUtHolderEESup}), we shall apply Talagrand's majorizing measure theorem and Sudakov's minoration theorem. This requires us to get the precise (matching) upper and lower bounds of the corresponding canonical metric, denoted as $d_1((t,x),(s,y)) = \sqrt{\mathbb{E}|u(t,x) - u(s,y)|^2}$,  associated with the solution $u(t,x)$. The analysis of these bounds differs from that of the SHE (\cite{HW2021}) in many aspects. A notable difficulty arises from the lack of monotonicity in the Fourier transform of the wave kernel. This obstacle is effectively solved by delving into integrals across infinitely varied intervals. The efficacy of our approach lies in carefully considering the integral over diverse intervals, circumventing the non-monotonicity issue inherent in the Fourier transform of the wave kernel. The detailed analysis is presented in Section 3 below.  

{%\red 
For nonlinear SWE with rough noise, it has been shown in \cite{BJQSPL16, HuAOP2017} that the solution admits a modification which is H\"older continuous of order $(H-\epsilon)$ in both time and space, for any $\epsilon > 0$. In particular, they proved that the $p$-moments of $\Delta _\tau u (t,x)$ and $\Delta _h u (t,x)$ can be bounded above by constants of order $|\tau|^H$ and $|h|^H$, respectively. Under   Dalang's condition for fractional Brownian field ($H_0, H_j > 1/2$, $j=1,\cdots,d$), similar H\"older regularity results for both nonlinear SHE and SWE have been well established; see, for example, \cite{dalang2009, DalangBEJ10, delgado2020, hu2014, QT2007}.

Furthermore, the papers \cite{LX2019, LXSPA23} investigate the exact modulus of continuity for the SWE under Dalang's condition. They establish that there exists a finite positive constant $K$ such that:
	\[
	\lim_{\epsilon \to 0+} \sup_{\substack{(t,x),(t',x') \in [a,a']\times [-b,b]^{d} \\ \sigma[(t,x),(t',x')] \le \epsilon}} \frac{|u(t,x) - u(t',x')|}{\gamma[(t,x),(t',x')]} = K \quad \text{a.s.}
	\]
	where the modulus function $\gamma$ is defined by the canonical metric $\sigma[(t,x),(s,y)]^2\\ =\EE[|u (t,x)-u(s,y)|^2]$ (which is $d_1$ in \eqref{NaturalMetric} below in one-dimensional setting) and a logarithmic correction:
	\[\gamma(\cdot) = \sigma(\cdot) \sqrt{\log(1 + \sigma(\cdot)^{-1})}\,.\]
Similar results for the SHE have been obtained in \cite{HSWX2020}.
}

To compare the results in the above-mentioned references with those obtained in this work, we stress the following three points. 
First, our analysis accommodates spatial Hurst parameters {%\blue 
$H_1$ that may be smaller than $1/2$}. 
Second, we establish matching lower bounds for the H\"older continuity exponents in \eqref{SupHolderEEasmp}-\eqref{SuptasmpHolder}, showing that the temporal and spatial H\"older exponents obtained in this work are indeed sharp.  
The third one is that we find the explicit global dependence of the H\"older constants on the 
diameters of domain.   
To the best of our knowledge,  when the spatial parameters are rough,  there is no necessary and sufficient condition on the solvability of the SWE and there is no result on the exact H\"older continuity of the solution. In particular, there has been no result on the global  H\"older continuity of the solution when the considered domain grows to infinity.  As in \cite{HW2021}, these results were critical to determining the solution space for the general nonlinear SHE, and we expect these results to also be needed to study the general nonlinear SWE.

The paper is organized as follows. Section 2
gives the proof of Theorem \ref{t.iff}.  
Section 3 gives  proofs of  Theorems  \ref{LUEESup}-\ref{LUtHolderEESup}  after 
obtaining the precise bound for the %\tcb
{canonical metric} 
$d_1((t,x),(s,y)) = \sqrt{\mathbb{E}|u(t,x) - u(s,y)|^2}$,  associated with the solution $u(t,x)$.   
In \ref{appenA}, we summarize the main theorems employed in this paper, and in \ref{appenB}, we present several technical proofs that were omitted from Section~\ref{s.2}.

%\tcb
{Throughout this paper, we write $A\approx B$ to indicate that there exists a nonzero  constant $C_1$ such that $|A-C_1B|=o(B)$.  We use   $A\ls B$ (or $A\gs B$)  to mean that there exists universal constants $C_1,\ C_2\in (0,\infty)$ such that $A\leq C_1B$ (or $A\geq C_2 B$). The notation $a\wedge b$ means the minimum of $a$ and $b$. Similarly, the notation $a\vee b$ means the maximum of $a$ and $b$.}

\section{Sufficient and necessary conditions
}\label{s.2}

In this section, we begin with an overview of some preliminary concepts. Subsequently, we provide the proof for Theorem \ref{t.iff}. Additionally, we elucidate the discontinuity observed in conditions \eqref{e.1.iff} when $H_0=1$.

 For any $\ph\in L^1(\RR^d)$, let $\cF\ph$ denote the Fourier transform of $\ph$ given by:
\[\cF\ph(\xi)=\int_{\RR^d}e^{-i\xi\cdot x}\ph(x)dx.\]
The Hilbert space $\HH$ is defined as the completion of the Schwartz space $\cS(\RR_+,\RR)$ concerning the inner product in spatial Fourier mode, as expressed below:
\begin{equation}\label{four_mode}
\langle f, g\rangle_\HH=C_{H}\int_{\RR_{+}^2\times \RR^{ d}} |r-s|^{2H_0-2}\cF f(r,\xi) {%\blue 
\overline{\cF g(s,\xi)}}\prod_{k=1}^d|\xi_k|^{1-2H_k}drdsd\xi,
\end{equation}
where $C_H=\frac{\Gamma(2H+1)\sin(\pi H)}{2\pi}$. Proceeding, we give the stochastic integration with respect to $W$, commencing with the integration of elementary processes.
\begin{Def}\label{Ele_def}
For $t\geq 0$, an elementary process $g$ is $\cF_t$-adapted random process given by  the following form:
\[
g(t,x)=\sum_{i=1}^n\sum_{j=1}^mX_{i,j}\1_{(a_i,b_i]}(t)\1_{(c_j,d_j]}(x)\,,
\]
where $n$ and $m$ are  positive and finite integers, $0\le a_1< b_1 < \cdots<a_n< b_n<+\infty$, $c_j<d_j$ and $X_{i,j}$ are $\mathcal{F}_{a_i}$-measurable random variables for $i=1,\cdots,n, j=1, \cdots, m$. The  stochastic integral of such a process $g$ with respect to $W$ is defined as
\begin{align}\label{elem_def}
\int_{\mathbb{R}_+\times \mathbb{R}^d}&g(t,x)W(dt,dx)=\sum_{i=1}^n\sum_{j=1}^mX_{i,j}W\lt(\1_{(a_i,b_i]}\otimes\1_{(c_j,d_j]}\rt)\nonumber\\
&=\sum_{i=1}^n\sum_{j=1}^mX_{i,j}\lt[W(b_i,d_j)-W(a_i,d_j)-W(b_i,c_j)+W(a_i,c_j) \rt].
\end{align}
\end{Def}
The integration with respect to $W$ can be extended to a broader class of adapted processes (cf. \cite{BJQ2015,HuAOP2017}).
\begin{Propos}\label{Int_Gen}
Let $\Lambda_{H}$ be the space of adapted random {%\blue 
processes} defined on $\mathbb{R}_+\times\mathbb{R}$ such that $g\in\HH$ almost surely and $\mathbb{E}[\|g\|_{\HH}^2]<\infty$. Then, we have the following statements.
\begin{enumerate}
\item The space of elementary process defined in Definition \ref{Ele_def} is dense in $\Lambda_{H}$;
\item For $g\in\Lambda_{H}$, the stochastic integral $\int_{\mathbb{R}_+\times\mathbb{R}^d}g(t,x)W(dt,dx)$ is defined as the $L^2(\Omega)$-limit of stochastic integrals of elementary processes which approximates $g(t,x)$ in $\Lambda_{H}$. We have the following isometry equality  for this kind of stochastic integral
    $$\mathbb{E}\lt[\lt(\int_{\mathbb{R}_+\times\mathbb{R}^d}g(t,x)W(dt,dx)\rt)^2 \rt]=\mathbb{E}[\|g\|_{\HH}^2].$$
\end{enumerate}
\end{Propos}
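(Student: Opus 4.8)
The plan is to follow the classical Walsh--Dalang scheme for constructing a stochastic integral against a Gaussian noise, with the one twist that some spatial Hurst parameters may lie below $1/2$, so that the spatial ``covariance kernel'' $\phi_{H_i}$ is not a genuine kernel and every second--moment computation must be carried out on the Fourier side through the inner product \eqref{four_mode}. Concretely: (i) establish the isometry on the elementary processes of Definition \ref{Ele_def}; (ii) show these are dense in $\Lambda_{H}$; (iii) extend the integral to $\Lambda_H$ by continuity and pass to the limit in the isometry.

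\emph{Step 1: isometry on elementary processes.} For $g(t,x)=\sum_{i,j}X_{i,j}\1_{(a_i,b_i]}(t)\1_{(c_j,d_j]}(x)$ I would expand $\EE[(\int g\,dW)^2]$ using \eqref{elem_def}; the adaptedness of the $X_{i,j}$ together with the assumption $H_0\ge 1/2$ (in the white--in--time case $H_0=1/2$, increments of $W$ over disjoint time intervals are independent of the past) is used to reduce the double sum to terms in which the temporal intervals coincide, leaving $\sum_{i,j,j'}\EE[X_{i,j}X_{i,j'}]\,(b_i-a_i)\,\prod_{k}\big(\text{increments of }R_{H_k}\big)$ in the white case and an analogous expression built from $|r-s|^{2H_0-2}$ when $H_0>1/2$. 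Separately I would compute $\EE[\|g\|_\HH^2]$ directly from \eqref{four_mode}: the spatial Fourier transform $\cF\1_{(c_j,d_j]}$ is explicit, and the one--dimensional integral $\int_\RR |\xi_k|^{1-2H_k}\cF\1_{(c_j,d_j]}(\xi_k)\overline{\cF\1_{(c_{j'},d_{j'}]}(\xi_k)}\,d\xi_k$ reproduces, up to the normalizing constant $C_H$, exactly the same $R_{H_k}$--increments, by the standard Fourier identity $\cF[\,|\cdot|^{2a-2}\,]=c_a\,|\cdot|^{1-2a}$ (with the convention $\phi_{1/2}=\delta$, i.e. exponent $0$) together with Parseval. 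Matching the two expressions gives $\EE[(\int g\,dW)^2]=\EE[\|g\|_\HH^2]$ for elementary $g$, i.e. $g\mapsto\int g\,dW$ is a linear isometry from the elementary processes, equipped with the $L^2(\Omega;\HH)$ norm, into $L^2(\Omega)$.

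\emph{Steps 2 and 3: density and extension.} Given an adapted $g$ with $\EE\|g\|_\HH^2<\infty$, I would first truncate to reduce to bounded $g$, then regularize by convolving in time with a smooth kernel supported in $(-\ep,0)$ (which preserves adaptedness) and in space with a standard mollifier, producing adapted, jointly smooth processes converging to $g$ in $L^2(\Omega;\HH)$, and finally discretize on a fine space--time grid to land in the class of Definition \ref{Ele_def}; each convergence is checked in the $\HH$--norm via \eqref{four_mode} and dominated convergence, using that $\cS(\RR_+,\RR)$ is dense in $\HH$ by the very definition of $\HH$ as a completion. With density in hand, for general $g\in\Lambda_H$ take elementary $g_n\to g$ in $L^2(\Omega;\HH)$; by Step 1 the random variables $\int g_n\,dW$ are Cauchy in $L^2(\Omega)$, hence convergent, with limit independent of the approximating sequence, so $\int g\,dW$ is defined as this $L^2(\Omega)$--limit, and letting $n\to\infty$ in the identity of Step 1 yields $\EE[(\int g\,dW)^2]=\EE[\|g\|_\HH^2]$.

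I expect the main obstacle to be Step 1: one must correctly match the physical--space description \eqref{CovW}--\eqref{CovdotW} of the noise with the Fourier--mode inner product \eqref{four_mode}, which means pinning down the normalizing constants and, crucially, using the Fourier transform of $|\cdot|^{2H_k-2}$ in the rough regime $H_k<1/2$ where no pointwise kernel interpretation is available; the bookkeeping of the cross terms in the adapted setting, where the hypothesis $H_0\ge 1/2$ enters, is the other delicate point.
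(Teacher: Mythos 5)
The paper itself offers no proof of Proposition \ref{Int_Gen}: it is stated as the standard Walsh--Dalang construction and, in the body of the paper, is only ever applied to \emph{deterministic} integrands (the mollified Green functions in \eqref{est1}). Your scheme --- isometry on elementary processes, density, extension by $L^2(\Omega)$-continuity --- is the right standard route, and Steps 2--3 are routine once Step 1 holds. The problem is that Step 1, as you describe it, fails in the regime $1/2<H_0<1$. Your reduction of the double sum to terms with coinciding temporal intervals rests on independence of the noise increment over $(a_{i'},b_{i'}]$ from $\cF_{a_{i'}}$; that is available only in the white-in-time case $H_0=1/2$. For $H_0>1/2$ the temporal kernel $|r-s|^{2H_0-2}$ couples disjoint time intervals, so the cross terms $i\neq i'$ do not disappear from $\EE\|g\|_{\HH}^2$, and on the probabilistic side the expectation $\EE\bigl[X_{i,j}X_{i',j'}\,W(\1_{(a_i,b_i]}\otimes\1_{(c_j,d_j]})\,W(\1_{(a_{i'},b_{i'}]}\otimes\1_{(c_{j'},d_{j'}]})\bigr]$ does \emph{not} factor into $\EE[X_{i,j}X_{i',j'}]$ times the noise covariance, because the later increment is correlated with $\cF_{a_{i'}}$ and hence with the random coefficients. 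The asserted equality is in fact false for genuinely random adapted integrands when $H_0>1/2$: already in the purely temporal situation, taking $g=B^{H_0}(a)\,\1_{(a,b]}$ with $V=B^{H_0}(b)-B^{H_0}(a)$, one computes $\EE[(B^{H_0}(a)V)^2]=\EE[B^{H_0}(a)^2]\,\EE[V^2]+2\bigl(\EE[B^{H_0}(a)V]\bigr)^2$, and the last term is strictly positive for $H_0\neq 1/2$, whereas $\EE\|g\|^2$ equals only the first product. So your matching argument can legitimately be carried out only when $H_0=1/2$ (independent increments) or when the integrand is deterministic (Wiener-integral isometry, no cross-term issue); the latter is all that the paper actually needs for \eqref{LinearMildSol} and \eqref{est1}, but your proposal claims the full adapted statement for all $H_0\ge 1/2$ and the argument as written cannot deliver it.

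Two smaller points. First, in the density step your time mollifier should average the \emph{past}: with the convolution $g_\ep(t)=\int g(t-s)\psi_\ep(s)\,ds$ one needs $\psi_\ep$ supported in $(0,\ep)$; support in $(-\ep,0)$, as you wrote, makes $g_\ep(t)$ depend on $g$ after time $t$ and destroys adaptedness. Second, your plan to verify the Fourier-side computation of $\langle\1_{(c_j,d_j]},\1_{(c_{j'},d_{j'}]}\rangle$ via $\cF[|\cdot|^{2a-2}]=c_a|\cdot|^{1-2a}$ is fine for $H_k>1/2$, but for $H_k<1/2$ there is no pointwise kernel and the identification with increments of $R_{H_k}$ must be argued directly from \eqref{four_mode} (e.g.\ by computing $\int_{\RR}|\xi|^{1-2H_k}\cF\1_{(c,d]}(\xi)\overline{\cF\1_{(c',d']}(\xi)}\,d\xi$ explicitly); you flag this as a bookkeeping issue, which is fair, but it is part of what makes Step 1 nontrivial rather than a formality.
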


Before giving the proof of Theorem \ref{t.iff}, we approximate the noise by a more regular one so that the corresponding equation does have a solution.
Let $p_\vare(x)=\frac{1}{(2\pi \vare)^{d/2}} e^{-\frac{|x|^2}{2\vare}}$
be the heat kernel and
consider
\begin{equation}
\dot W_\vare(t, x)= \int_{\RR^d}   p_\vare(x-y) \dot W( t ,  y) dy\,.
\end{equation}
The covariance  of $\dot W_\vare(t,x)$ is
then
\begin{equation}\label{CovdotW}
  \EE[\dot  W_\vare(t,x)\dot W_\vare (s,y)]=\phi_{H_0}(t-s)
  \int_{\RR^{2d}}\prod_{i=1}^d  \phi_{H_i}( z_i,  \zeta_i)p_\vare(x-z)p_\vare(y-\zeta)dzd\zeta\,.
\end{equation}
With $\dot W_\vare$ we approximate the equation \eqref{eq.SWE} by the following
stochastic wave equation
\begin{equation}\label{eq.SWE_approximate}
\begin{cases}
  \frac{\partial^2 u_\vare (t,x)}{\partial t^2}=\Delta u_\vare(t,x)+\dot{W}_\vare (t,x),\quad   t\ge 0,\quad
   x\in\RR^d\,, \\
  u_\vare(0,x)=0\,,\quad\frac{\partial}{\partial t}u_\vare(0,x)=0\,.
\end{cases}
\end{equation}
We denote Green's function for the wave operator associated with \eqref{eq.SWE} 
(or \eqref{eq.SWE_approximate})   by
$G_t(x-y)$,  which  has the following well-known form when $d=1, 2, 3$:
\begin{equation}
	G_t(x)=\begin{cases}
		\frac{1}{2} \1_{\{|x|< t\}}&\qquad \hbox{when $d=1$}\,,\\
		\frac{1}{2\pi}\frac{1}{\sqrt{t^2-|x|^2}}{
		%%\blue 
		\1_{B(0,t)}(x)} &\qquad \hbox{when $d=2$}\,,\\
		\frac{1}{4\pi t}\sigma_t(dx)&\qquad \hbox{when $d=3$}\,,
	\end{cases}
\end{equation}
{%\blue 
where $\sigma_t(dx)$ denotes  the uniform measure on sphere $\partial B(0,t)$ centered at $0$ with radius $t$.} 
Green's function in the higher dimensional case is more complicated; however, its Fourier transform has the following consistent form for all dimensions:
\begin{equation}
	\hat G_t(\xi)=\frac{\sin (t|\xi|)}{|\xi|}\,,
	\quad t\ge 0\,, \ \xi \in \RR^d\,.
\end{equation}

It is easy to see that for any  $H_0\ge 1/2$ and for any $H\in (0, 1)^d$,
\[
u_\vare(t,x)=\int_0^t \int_{\RR^d} G_{t-s}(x-y) W_\vare(ds, dy)
\]
exists in the sense of Proposition \ref{Int_Gen}  as a finite variance Gaussian random field.
\begin{Def} If $\{  u_\vare(t,x)\,, \vare\to 0\}$ is a Cauchy sequence in $L^2(\Om, \cF, \PP)$ for any
$(t,x)\in \RR_+\times \RR^d$,  then we say \eqref{eq.SWE} is solvable, and the limit is called its solution.
\end{Def}

\begin{proof}[Proof of Theorem \ref{t.iff}]\
By Proposition \ref{Int_Gen} and Plancherel's identity, we have {%\blue 
for $H_0\in [1/2,1]$ and $H=(H_1,\cdots,H_d)\in(0,1)^d$},
\begin{align}\label{est1}
\EE[ u_\vare (t,x)
u_{\vare'}  (t,x) ]&=\EE\lt[\lt|\int_{0}^{t}\int_{\RR^d} G_{t-s}(x-y)
 W_\vare(ds,dy)\rt|^2\rt]\nonumber\\
&=\int_{[0,t]^2}\int_{\RR^d}\cF[G_{t-s}(x-\cdot)](\xi)\overline{\cF[G_{t-r}(x-\cdot)](\xi)}
\nonumber\\
&\qquad \cdot\Lambda_{H_0}(r-s)\cdot\prod_{i=1}^d|\xi_i|^{1-2H_i}e^{-\frac{(\vare +\vare') |\xi|^2}
{2}}d\xi dsdr\nonumber\\
&=\int_{[0,t]^2}\int_{\RR^d}\frac{\sin(s|\xi|)\cdot\sin(r|\xi|) }{|\xi|^2}
\nonumber\\
&\qquad \cdot\Lambda_{H_0}(r-s)\cdot\prod_{i=1}^d|\xi_i|^{1-2H_i}e^{-\frac{(\vare +\vare') |\xi|^2}
{2}}d\xi dsdr\,.
\end{align}
where {%\blue 
$\Lambda_{H_0}(r-s)=\mathfrak{c}_{H_0}|r-s|^{2H_0-2}$ if $H_0\in(1/2,1)$, $\Lambda_{1/2}(r-s)=\delta(r-s)$ and $\Lambda_{1}(r-s)=1$.}
Denote
\[
I_{\vare, \vare'}:=\EE[ u_\vare (t,x)
u_{\vare'}  (t,x) ].
\]
Then we know that $\{u_\vare (t,x)\}_{\vare\geq 0} $ is Cauchy in $L^2(\Om, \cF, \PP)$ if $I_{\vare, \vare'}$ is convergent.

In the following, we shall divide the discussion of the  convergence of $I_{\vare, \vare'}$ into three cases according to the values of the temporal Hurst parameter: $H_0=1/2$, $H_0=1$, $0<H_0<1/2$.\\

\noindent \textbf{Step 1: the case $H_0=1/2$.} In this case, equation \eqref{est1} takes the form
\begin{align*}
I_{\vare, \vare'} 
&=\int_0^t \int_{\RR^d}\frac{\sin^2(s|\xi|) }{|\xi|^2}\cdot\prod_{i=1}^d|\xi_i|^{1-2H_i} e^{-\frac{(\vare +\vare') |\xi|^2}{2} }  d\xi ds\\
&=\int_{\RR^d}\frac{1 }{|\xi|^2}\cdot\lt[\frac t2-\frac{\sin(2t|\xi|)}{4|\xi|}\rt]\cdot\prod_{i=1}^d|\xi_i|^{1-2H_i}  e^{-\frac{(\vare +\vare') |\xi|^2}{2} }  d\xi \\
&=:\int_{\RR^d}f_{(\vare,\vare')}(t,\xi,H)d\xi .
\end{align*}
It is clear that  as $\vare, \vare'\to 0$
\[
f_{(\vare,\vare')}(t,\xi,H)\rightarrow\frac{1 }{|\xi|^2}\cdot\lt[\frac t2-\frac{\sin(2t|\xi|)}{4|\xi|}\rt]\cdot\prod_{i=1}^d|\xi_i|^{1-2H_i} 
\]  
and $f_{(\vare,\vare')}(t,\xi,H)$ is also dominated by the above limiting quantity for any $\vare,\vare'\geq 0$. Thus, by the Lebesgues  dominated convergence theorem, if we can show
\begin{equation}
I:=\int_{\RR^d}\frac{1 }{|\xi|^2}\cdot\lt[\frac t2-\frac{\sin(2t|\xi|)}{4|\xi|}\rt]\cdot\prod_{i=1}^d|\xi_i|^{1-2H_i}     d\xi <\infty, \label{e.I.half}
\end{equation}
then, it holds that
\[
\lim_{\vare, \vare'\to 0}I_{\vare, \vare'} =I\,.
\]
This is to say, $\{u_\vare (t,x)\}_{\vare\geq 0}$ is Cauchy in $L^2(\Om, \cF, \PP)$.

{ 
We shall use the spherical coordinates to estimate the integral in \eqref{e.I.half} (e.g., see also \cite[(7.5)]{HW2022}):
\[
\left\{\begin{split}
\xi_1 &= \rho  \cos(\varphi_1) \\
\xi_2 &= \rho \sin(\varphi_1) \cos(\varphi_2) \\
\xi_3 &= \rho \sin(\varphi_1) \sin(\varphi_2) \cos(\varphi_3) \\
    &\,\,\,\vdots\\
\xi_{d-1} &= \rho \sin(\varphi_1) \cdots \sin(\varphi_{d-2}) \cos(\varphi_{d-1}) \\
\xi_d     &= \rho \sin(\varphi_1) \cdots \sin(\varphi_{d-2}) \sin(\varphi_{d-1}) \,,
\end{split} \right.
\]
where $0\le \rho <\infty\,, 0\le \varphi_1, \cdots, \varphi_{d-2}\le \pi  \,,
0\le \varphi_{d-1}\le 2\pi$, and whose Jacobian is
\begin{align*}
|J_d| &= \rho^{d-1}\sin^{d-2}(\varphi_1)\sin^{d-3}(\varphi_2)\cdots \sin(\varphi_{d-2})\,.
\end{align*}
For $t\geq 0$ and $|\xi|\geq 1$, rewriting the integral in spherical coordinates yields
\begin{align*}
	\int_{|\xi|\geq 1} &\frac{1}{|\xi|^2}\cdot\lt[\frac t2-\frac{\sin(2t|\xi|)}{4|\xi|}\rt]\cdot\prod_{i=1}^d|\xi_i|^{1-2H_i} d\xi \\
	&\lesssim \int_1^\infty \int_{[0,\pi]^{d-2}}\int_0^{2\pi} \rho^{-2} \rho^{\sum_{i=1}^d (1-2H_i)} \rho^{d-1} \prod_{i=1}^{d-1}d\varphi_i d\rho \\
	&\lesssim \int_1^\infty \rho^{\sum_{i=1}^d (1-2H_i)+d-3}d\rho
\end{align*}
which is finite if and only if
\begin{align*}
	\sum_{i=1}^d (1-2H_i)+d-3<-1 \ \Leftrightarrow \ |H|>d-1\,.
\end{align*}
Moreover, for $t\geq 0$ and $|\xi|< 1$, we have that
\begin{align*}
	\int_{|\xi|< 1} &\frac{1}{|\xi|^2}\cdot\lt[\frac t2-\frac{\sin(2t|\xi|)}{4|\xi|}\rt]\cdot\prod_{i=1}^d|\xi_i|^{1-2H_i} d\xi  
    \lesssim \int_0^1 \rho^{\sum_{i=1}^d (1-2H_i)+d-1}d\rho
\end{align*}
which is finite if and only if $|H|<d$. But this clearly holds since $H_i<1$ for any $i=1,\cdots,d$.
}

Thus, we can see from \eqref{e.I.half} that $I<\infty$  if and only if $|H|>d-1$, which proves the theorem when $H_0=1/2$.\\

\noindent \textbf{Step 2: the case $H_0=1$.} In this case, \eqref{est1} reduces to
\begin{align*}
I_{\vare, \vare'} &=\int_{[0,t]^2}\int_{\RR^d}\frac{\sin(s|\xi|)\cdot\sin(r|\xi|) }{|\xi|^2}\cdot\prod_{i=1}^d|\xi_i|^{1-2H_i} e^{-\frac{(\vare +\vare') |\xi|^2}{2} } d\xi dsdr\\
&=\int_{\RR^d}\frac{1}{|\xi|^4}\cdot[\cos(t|\xi|)-1]^2\cdot\prod_{i=1}^d|\xi_i|^{1-2H_i}e^{-\frac{(\vare +\vare') |\xi|^2}{2} }d\xi\,.
\end{align*}
As in the previous case, we can show that $\{u_\vare (t,x)\}_{\vare\geq 0} $ is Cauchy in $L^2(\Om, \cF, \PP)$ provided
\begin{align*} 
I=\int_{\RR^d}\frac{1}{|\xi|^4}\cdot[\cos(t|\xi|)-1]^2\cdot\prod_{i=1}^d|\xi_i|^{1-2H_i} d\xi<\infty \,.
\end{align*}
{ 
Similarly, one can utilize the spherical coordinates for $t\geq 0$ and $|\xi|\geq 1$:
\begin{align*}
	\int_{|\xi|\geq 1}\frac{1}{|\xi|^4}\cdot[\cos(t|\xi|)-1]^2\cdot\prod_{i=1}^d|\xi_i|^{1-2H_i} d\xi
	\lesssim \int_1^\infty \rho^{\sum_{i=1}^d (1-2H_i)+d-5} d\rho 
\end{align*}
which is finite if and only if
\begin{align*}
	\sum_{i=1}^d (1-2H_i)+d-5<-1\ \Leftrightarrow \ |H|>d-2\,.
\end{align*}
Furthermore, for $t\geq 0$ and $|\xi|< 1$ we have
\begin{align*}
	\int_{|\xi|< 1}\frac{1}{|\xi|^4}\cdot[\cos(t|\xi|)-1]^2\cdot\prod_{i=1}^d|\xi_i|^{1-2H_i} d\xi
	\lesssim \int_0^1 \rho^{\sum_{i=1}^d (1-2H_i)+d-1}d\rho
\end{align*}
which is finite if and only if $|H|<d$; this condition holds since $H_i<1$ for any $i=1,\cdots,d$.

Therefore, $I<\infty$ in this case if and only if $|H|>d-2$, which completes the proof for $H_0=1 $.\\
}

\noindent \textbf{Step 3: the case $H_0\in(\tfrac12,1)$.} In this case, by a change of  variables  $s|\xi|\to s$ and $ r|\xi|\to r$, equation \eqref{est1} can be rewritten as
\begin{align}
I_{\vare, \vare'} 
&= \int_{\RR^d} \int_0^{t|\xi|}\int_0^{t|\xi|} \frac{\sin( s)\cdot\sin( r) }{|\xi|^{2+2H_0}}\cdot| r- s|^{2H_0-2}\nonumber\\
&\qquad\qquad \cdot\prod_{i=1}^d|\xi_i|^{1-2H_i}  e^{-\frac{(\vare +\vare') |\xi|^2}{2} } dsd r d\xi.\label{u2}
\end{align}
Applying the spherical coordinates, we obtain that
\begin{align}\label{u1}
I_{\vare, \vare'}
&= C_{H_0, H}\int_0^\infty  \rho^{2d-2|H|-2H_0-3}
e^{-\frac{(\vare +\vare') \rho^2 }{2} } \nonumber\\
& \qquad\qquad \times\int_0^{t \rho }\int_0^{t\rho}  \sin( s)\cdot\sin( r)    \cdot| r- s|^{2H_0-2}  dsd r d\rho\nonumber\\
&= C_{H_0, H} t^{2+2|H|+2H_0-2d} \int_0^\infty  \rho^{2d-2|H|-2H_0-3}  e^{-\frac{(\vare +\vare') \rho^2 }{2} } \nonumber\\
& \qquad\qquad \times\int_0^{  \rho }\int_0^{ \rho}  \sin( s)\cdot\sin( r)    \cdot| r- s|^{2H_0-2}  dsd r d\rho\nonumber\\
&= C_{H_0, H} t^{2+2|H|+2H_0-2d} \int_0^\infty  \rho^{2d-2|H|-2H_0-3}
e^{-\frac{(\vare +\vare') \rho^2 }{2} }    g(\rho)  d\rho \,,
\end{align}
where $C_{H_0, H}$ is a finite positive constant depending only on $H_0$ and $H=(H_1,\cdots,H_d)$, and 
\[
g(\rho):=\int_{0< s<r<\rho}\sin( s)\cdot\sin( r) \cdot|r- s|^{2H_0-2}d sd r.
\]
%We shall prove that $|g(x)|\leq C x$ for some positive constant $C$ when $x\to +\infty$.
As in the previous two cases, we see that $\{u_\vare (t,x)\}_{\vare} $ is Cauchy in $L^2(\Om, \cF, \PP)$ if and only if 
\begin{align}\label{inte_g1}
\int_0^\infty  \rho^{2d-2|H|-2H_0-3}
e^{-\frac{(\vare +\vare') \rho^2 }{2} }    g(\rho)  d\rho 
&=\int_0^1 \rho^{2d-2|H|-2H_0-3}
e^{-\frac{(\vare +\vare') \rho^2 }{2} } g(\rho)  d\rho \nonumber\\
&+\int_1^\infty  \rho^{2d-2|H|-2H_0-3}
e^{-\frac{(\vare +\vare') \rho^2 }{2} }    g(\rho)  d\rho <\infty\,.
\end{align}
When $\rho\leq 1$, we have
\begin{align*}
	g(\rho) & \lesssim \int_{0< s<r<\rho}sr \cdot|r- s|^{2H_0-2}d sd r \approx \rho^{2H_0+2}\,.
\end{align*} 
Therefore
\[
\int_0^1 \rho^{2d-2|H|-2H_0-3}
e^{-\frac{(\vare +\vare') \rho^2 }{2} } g(\rho)  d\rho<+\infty
\]
if and only if $2d-2|H|-1>-1$, which holds obviously. Thus, from \eqref{inte_g1} we know
\[
\int_0^\infty  \rho^{2d-2|H|-2H_0-3}
e^{-\frac{(\vare +\vare') \rho^2 }{2} }    g(\rho)  d\rho<+\infty
\]  
is  equivalent to
\begin{equation}\label{I_finite}
\int_1^\infty  \rho^{2d-2|H|-2H_0-3}
     g(\rho)   d\rho<+\infty.
\end{equation}
Our goal in the following is to prove that \eqref{I_finite} holds if and only if
\[
|H|+H_0>d-1/2\,.
\]
To complete this task, we must find the exact asymptotics of $g(\rho)$ as $\rho\to \infty$. 
% {\red  A modification is added according to our discussion on Oct 18th. (Both reviewers have concerns about this argument, especially on page 15 of the original submission.)}

Let $\tilde s=r- s$ and $\tilde r= r+ s$. By elementary trigonometric identity
\[
{\sin(s)\sin(r) }=\frac{\cos(r-s)-\cos(s+r)}{2},
\]  
we obtain
\begin{align}
g(\rho)&=\frac 12\int_0^{\rho}\int_{\tilde s}^{2\rho-\tilde s}\frac{(\cos\tilde s)-(\cos\tilde r)}{2} \cdot|\tilde s|^{2H_0-2}d \tilde rd \tilde s\nonumber\\
&=\frac 12\int_0^{\rho}(\rho-  s)\cos  (s)\cdot  s ^{2H_0-2}d  s-\frac14\int_0^{\rho}[\sin(2\rho-  s)-\sin(  s)]\cdot  s ^{2H_0-2}d  s\nonumber\\
&=:\frac 12 g_1(\rho)-\frac14 g_2(\rho)\,.\label{compu1}
\end{align}
Let us first deal with the integral  $g_2(\rho)$. We write
\begin{align*}
g_2(\rho)&=  \int_0^1 [\sin(2\rho-  s)-\sin(  s)]\cdot  s ^{2H_0-2}d  s \nonumber\\
&+\int_1^{\rho} [\sin(2\rho-  s)-\sin(  s)]\cdot  s ^{2H_0-2}d  s \nonumber\\
&=:g_{21}(\rho)+g_{22}(\rho)\,.
\end{align*}
It is obvious that $g_{21}(\rho)$ is a bounded function of $\rho$ by recalling that $H_0\in (\tfrac{1}{2},1)$. 
For the term $g_{22}(\rho)$, integration by parts implies
\begin{align*}
g_{22}(\rho)&= \int_1^\rho     s ^{2H_0-2}
d [\cos(2\rho-  s)+\cos(  s) -2\cos(\rho)]\\
&=[\cos(2\rho-  s)+\cos(  s) -2\cos(\rho)]\cdot  s ^{2H_0-2}\big|_{s=1}^{s=\rho}\\
&\qquad\qquad -(2H_0-2) \int_1^{\rho}[\cos(2\rho-  s)+\cos(  s) -2\cos(\rho)]\cdot  s ^{2H_0-3}d  s
\end{align*}
which shows that $g_{22}(\rho)$ is bounded as well.
Thus $g_2(\rho)$ is bounded for $\rho\in (0, \infty)$.
% On the other hand, when $\rho\rightarrow 0+$, $g_2(\rho)\simeq \rho^{2H_0}$.
Hence, the following integral
\[
\int_1^\infty  \rho^{2d-2|H|-2H_0-3}  g_2(\rho)  d\rho
\]
is finite if  $2d-2|H|-2H_0-3<-1$, i.e.,    % and $ 2d-2|H|-2H_0-3+2H_0>-1$
\begin{equation}\label{cond1}
 |H|+H_0>d-1 \,.
\end{equation}

On the other hand, integration by parts yields
\begin{align}\label{g_1}
g_1(\rho)&= \int_0^{\rho}(\rho-  s)\cos(s)\cdot  s ^{2H_0-2}d  s \nonumber\\
&= \int_0^{\rho} s ^{2H_0-2}d   \left[\rho \sin (s) -s\sin (s)-\cos (s)+1\right] \nonumber\\
&=s ^{2H_0-2}    \left[\rho \sin (s) -s\sin (s)-\cos (s)+1\right] \big|_{s=0}^{\rho} \\
&\qquad -(2H_0-2) \int_0^{\rho}  \left[\rho \sin (s) -s\sin (s)-\cos (s)+1\right]  s ^{2H_0-3}  d  s\nonumber \,.
\end{align}
{%\red  
For sufficiently large $\rho$, the first term in \eqref{g_1} becomes negligible, so that 
\begin{align*}
	g_1(\rho)  \approx&     \int_0^{\rho}  \left[\rho \sin s -s\sin s-\cos s+1\right]  s ^{2H_0-3}  d  s \\
 =& \int_0^{\rho}  \left(\rho  -s\right)\cdot(\sin s )\cdot s ^{2H_0-3}  d  s +\int_0^{\rho}(1-\cos s)s ^{2H_0-3}  d  s\\
 \approx&\int_0^{\rho}  \left(\rho -s\right)\cdot(\sin s) \cdot s ^{2H_0-3}  d  s\,.
\end{align*}
Thus, a standard change of variable yields that
\begin{align}\label{g_1_infty}
 g_1(\rho)  
 &\approx \rho^{2H_0-1}\int_0^{1}  (1-s)\cdot \sin(\rho s) \cdot s ^{2H_0-3}  d  s \nonumber\\
 &\approx   \rho^{2H_0-1}\int_0^{1}   \sin(\rho s) \cdot s ^{2H_0-3}  d  s\,.
\end{align}
Denote $I(\rho, H_0) :=\int_0^{1}   \sin(\rho s) \cdot s^{2H_0-3}  ds$. In \ref{appenB.1}, we show that 
\begin{align}\label{eq:sin-1F2}
	I(\rho, H_0) \approx   \rho\cdot   {}_1F_2(H_0-\tfrac12; \tfrac32,  H_0+\tfrac12; -\tfrac{\rho^2}{4}),
\end{align}
where we have applied the generalized hypergeometric function $_1F_2(a_1;b_1,b_2;z)$ with parameters $a_1,\ b_1,\ b_2$. 
}
According to \cite[Eq. 16.11.8]{OF2010} with
\[
p=1, \quad q=2, \quad 
a_1=H_0-\tfrac12 , \quad b_1=\tfrac32, \quad b_2=H_0+\tfrac12\,,
\]
we have as $\rho\to +\infty$
\begin{eqnarray*}
{}_1F_2(a_1; b_1,  b_2; -\tfrac{\rho^2}{4})
&\approx &\tfrac{\Gamma(b_1)\Gamma(b_2)}{\Gamma(a_1)} [ H_{1,2}(\tfrac{\rho^2}{4})+E_{1,2}(\tfrac{\rho^2}{4}e^{\pi i} ) +E_{1,2}(\tfrac{\rho^2}{4}e^{-\pi i} )]
%\\
%&\approx & \rho^{-2a_1}+\rho^{\nu} \\
%&\approx & \rho ^{-2H_0+1}
\end{eqnarray*}
where the functions $H_{1,2}$ and $E_{1,2}$ are borrowed from  \cite[16.11.1 and 16.11.2]{OF2010} with two more parameters $\kappa=2,~\nu=-2$:
\begin{align*}
	H_{1,2}(z)&=\sum_{k=0}^{\infty}\frac{(-1)^k}{k!} \cdot\frac{\Gamma(a_1+k)}{\Gamma(b_1-a_1-k)\Gamma(b_2-a_1-k)}\cdot z^{-a_1-k},\\
	E_{1,2}(ze^{\pm\pi i})&=(2\pi)^{-1/2}\cdot 2^{3/2}\cdot e^{2z^{1/2}e^{\pm\frac{\pi}{2}i}}\sum_{k=0}^{\infty}\lt[2(ze^{\pm\pi i})^{1/2}\rt]^{-2-k}.
\end{align*}
{%\red  
Substituting $z = \tfrac{\rho^2}{4}$ into the above series, we observe that, as $\rho \to \infty$, only the term corresponding to $k = 0$ contributes. Consequently,
\begin{align}\label{eq:1F2_infty}
	{}_1F_2(a_1; b_1,  b_2; -\tfrac{\rho^2}{4})
	&\approx  \rho^{-2a_1}+\rho^{-2} \approx  \rho ^{-2H_0+1}\,, \quad\hbox{ as } \rho \to \infty\,.
\end{align}
Alternatively, we provide a direct verification of \eqref{eq:1F2_infty} using the Mellin-Barnes integral in \ref{appenB.2}.
}

Combining \eqref{g_1_infty}, \eqref{eq:sin-1F2} and \eqref{eq:1F2_infty} proves that
\begin{align}\label{exact_rho}
	g_1(\rho)\approx \rho^{2H_0-1}\cdot \rho\cdot \rho ^{-2H_0+1} \approx  \rho\,.
\end{align} 
Besides,  to further illustrate its asymptotics, we plot some graphs of $g_1(\rho)$ defined by \eqref{g_1} under different $H_0 $ in Figure \ref{image_g1}, which also shows that $g_1(\rho)\approx \rho$ when $1/2<H_0<1$. 
Thus,
 \[\int_1^\infty  \rho^{2d-2|H|-2H_0-3}  g_1(\rho)  d\rho<\infty\]
 if and only if
\begin{equation}\label{cond2}
2d-2|H|-2H_0-2<-1\Leftrightarrow |H|+H_0>d-1/2.
\end{equation}
This verifies \eqref{I_finite} and consequently proves Theorem \ref{t.iff} when $1/2<H_0<1$. 
The proof of all cases in \eqref{e.1.iff} is complete.
\end{proof}

\begin{figure}[htbp]
\begin{subfigure}{.3\textwidth}
  \centering
  \includegraphics[width=1\linewidth]{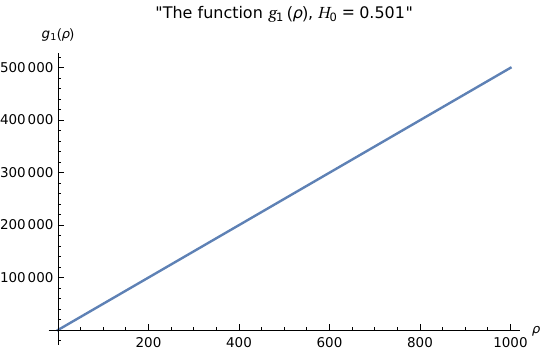}
 % \caption{$d_0=1$}
  \label{fig101}
\end{subfigure}%
 \hfill
\begin{subfigure}{.3\textwidth}
  \centering
  \includegraphics[width=1\linewidth]{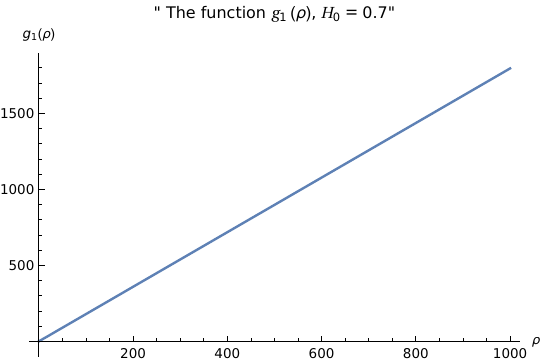}
  %\caption{$d_0=0.7$}
  \label{fig102}
\end{subfigure}
 \hfill
\begin{subfigure}{.3\textwidth}
  \centering
  \includegraphics[width=1\linewidth]{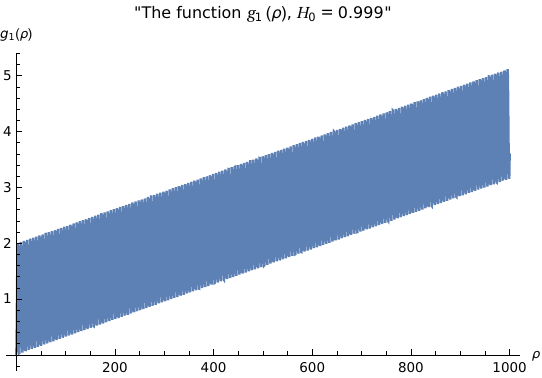}
  %\caption{$d_0=0.3$}
  \label{fig103}
\end{subfigure}
\caption{Images of $g_1(\rho)$ defined in \eqref{g_1} under three cases. \textbf{Left:} $H_0=0.501$, $0\leq\rho\leq1000$; \textbf{Middle:} $H_0=0.7$, $0\leq\rho\leq1000$; \textbf{Right:} $H_0=0.999$, $0\leq\rho\leq1000$.}
\label{image_g1}
\end{figure}

\begin{Rmk}\label{rem_trans}
As highlighted in the introduction, a discontinuity arises in the conditions \eqref{e.1.iff} of Theorem \ref{t.iff} when $H_0=1$. Examination of Figure \ref{image_g1} reveals that for $H_0\in(\tfrac12,1)$, the function $g_1(\rho)\asymp \rho$ shares the same order as $\rho$ as $\rho\to\infty$. Conversely, for $H_0=1$, the behavior of $g_1(\rho)$ deviates from being asymptotically of the order $\rho$ as $\rho\to\infty$ (refer to Figure \ref{rmk_image_g1}). This observation provides insight into the discontinuity in conditions \eqref{e.1.iff} at $H_0=1$.

\begin{figure}[htbp]
\begin{subfigure}{.45\textwidth}
  \centering
  \includegraphics[width=1\linewidth]{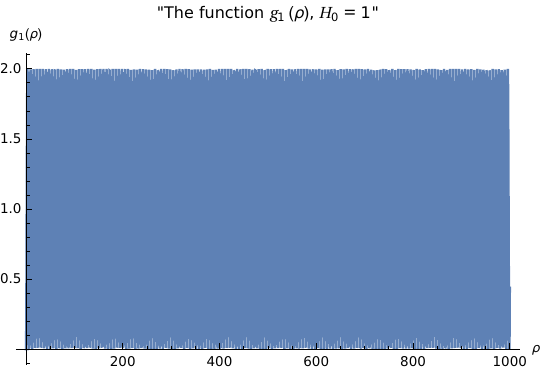}
 % \caption{$d_0=1$}
  \label{fig201}
\end{subfigure}%
 \hfill
\begin{subfigure}{.45\textwidth}
  \centering
  \includegraphics[width=1\linewidth]{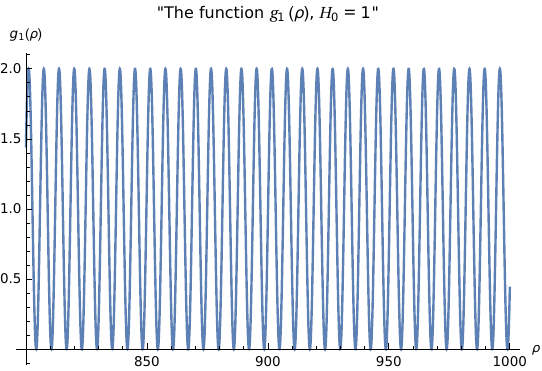}
  %\caption{$d_0=0.7$}
  \label{fig202}
\end{subfigure}
\caption{Images of $g_1(\rho)$ when $H_0=1$. \textbf{Left:} $H_0=1$, $0\leq\rho\leq1000$; \textbf{Right:} $H_0=1$, $800\leq\rho\leq1000$.}
\label{rmk_image_g1}
\end{figure}

\end{Rmk}

\section{Properties of the solution}

In this section, we focus on the properties of the solution to \eqref{eq.SWE} in one spatial dimension, particularly in the case where the noise is white in time ($H_0=\tfrac12$). The proofs of Theorem \ref{LUEESup}, Theorem \ref{LUHolderEESup}, and Theorem \ref{LUtHolderEESup} are based on Talagrand's majorizing measure theorem and the Sudakov minoration theorem. To this end, we begin by establishing sharp upper and lower bounds for the canonical metric associated with the solution $u(t,x)$:
\begin{align}\label{NaturalMetric}
   d_1((t,x),(s,y))= \sqrt{\EE|u (t,x)-u
 (s,y)|^2}\,.
\end{align}
It is important to note that $d_{1}((t,x), (s, y))$ does not represent a distance.

\begin{Lem}\label{LemMetricProp}
Let  $d_1((t,x),(s,y))$  be    the canonical metric  defined by  \eqref{NaturalMetric}. Denote 
\begin{align}\label{EqvMetric}
     D_{1,H}((t,x), (s, y)) &:= (s\wedge t)^{\frac 12}\cdot\big[|x-y|^H{\wedge  } (t\wedge s)^{H}\big]+(s \vee t)^{\frac12}\cdot|t-s|^{H}\,.
   \end{align}
Then,
\begin{equation}
 d_1((t,x),(s,y)) \approx D_{1,H}((t,x), (s, y))\,.
\end{equation}
This means that there exist  strict positive  constants $c_{H}$ and $C_{H}$ such that
    \begin{align}
   	c_H D_{1,H}((t,x), (s, y))\leq d_1((t,x),(s,y))
       \leq C_H D_{1,H}((t,x), (s, y)) \label{LUofNaturalMetric}
   \end{align}
 for all  $(t,x),(s,y)\in \RR_+\times\RR$.
\end{Lem}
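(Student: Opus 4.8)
The plan is to compute $d_1((t,x),(s,y))^2 = \EE|u(t,x)-u(s,y)|^2$ explicitly via the spectral (Fourier) representation and then match it, up to constants, with $D_{1,H}^2$. Since $H_0=1/2$ and $d=1$, Proposition \ref{Int_Gen} and Plancherel give
\begin{align*}
d_1((t,x),(s,y))^2 = C_H\int_0^\infty\int_{\RR}\bigl|\,\widehat{G_t}(\xi)e^{i t\cdot 0}\text{-type terms}\,\bigr|^2|\xi|^{1-2H}\,d\xi\,,
\end{align*}
more precisely one writes $u(t,x)-u(s,y)=\int_0^\infty\int_\RR\bigl[\1_{[0,t]}(r)G_{t-r}(x-\cdot)-\1_{[0,s]}(r)G_{s-r}(y-\cdot)\bigr](z)\,W(dr,dz)$ and uses the isometry with the white-in-time, fractional-in-space covariance to get, after the change of variables $r\mapsto t-r$ etc.,
\[
d_1((t,x),(s,y))^2 = C_H\int_{\RR}\int_0^{t\vee s}\Bigl|\1_{[0,t]}(r)\tfrac{\sin(r|\xi|)}{|\xi|}e^{i\xi x}-\1_{[0,s]}(r)\tfrac{\sin(r|\xi|)}{|\xi|}e^{i\xi y}\Bigr|^2|\xi|^{1-2H}\,dr\,d\xi\,.
\]
Assuming WLOG $s\le t$, I would split this into the "diagonal" contribution on $r\in[0,s]$, which contributes $C_H\int_\RR\frac{\sin^2(r|\xi|)}{|\xi|^2}|e^{i\xi x}-e^{i\xi y}|^2|\xi|^{1-2H}\,dr\,d\xi$ over $r\in[0,s]$, plus the "tail" contribution on $r\in[s,t]$, which contributes $C_H\int_\RR\int_s^t\frac{\sin^2(r|\xi|)}{|\xi|^2}|\xi|^{1-2H}\,dr\,d\xi$. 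Carrying out the $r$-integral (it produces $\frac{r}{2}-\frac{\sin(2r|\xi|)}{4|\xi|}$ evaluated at the endpoints) and then scaling $\xi\mapsto\xi/s$, $\xi\mapsto\xi/(t-s)$, etc., one sees the diagonal term is comparable to $s\cdot\bigl(|x-y|^{2H}\wedge s^{2H}\bigr)$ and the tail term is comparable to $t\cdot|t-s|^{2H}$ (using $t\vee s=t$ here), which is exactly $D_{1,H}^2$ up to the elementary inequality $a^2+b^2\asymp(a+b)^2$.

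The technical heart is the two scalar asymptotic estimates: first, $\int_\RR\frac{1}{|\xi|^2}\bigl(\frac{a}{2}-\frac{\sin(2a|\xi|)}{4|\xi|}\bigr)|\xi|^{1-2H}\,d\xi\asymp a^{1+2H}$ for $a>0$ (this is a clean scaling argument once one checks the integral $\int_\RR|\xi|^{-1-2H}\bigl(\frac12-\frac{\sin(2|\xi|)}{4|\xi|}\bigr)d\xi$ converges at both $0$ and $\infty$, which needs $0<H<1$ — note $H<1/2$ is assumed); and second, the "increment in $x$" estimate $\int_\RR\frac{\sin^2(r|\xi|)}{|\xi|^2}|1-e^{i\xi(x-y)}|^2|\xi|^{1-2H}\,dr\,d\xi$ over $r\in[0,s]$ being comparable to $s\,(|x-y|^{2H}\wedge s^{2H})$. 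For the latter I would use $|1-e^{i\xi h}|^2=2(1-\cos(\xi h))\asymp(|\xi h|^2\wedge 1)$, split the $\xi$-integral at $|\xi|\sim 1/|h|$ and at $|\xi|\sim 1/s$, and handle the three resulting regimes by elementary bounds, crucially using $\sin^2(r|\xi|)\asymp (r|\xi|)^2$ for $r|\xi|$ small and the averaging $\frac{1}{s}\int_0^s\sin^2(r|\xi|)\,dr\asymp 1$ for $|\xi|\gtrsim 1/s$. The main obstacle I anticipate is getting the \emph{lower} bound in the right regime: upper bounds follow from dropping oscillatory cancellation, but for the lower bound one must exhibit a region of $\xi$ where no cancellation occurs — e.g. for the spatial increment, restrict to $|\xi|\in[1/|h|,2/|h|]$ (so $1-\cos(\xi h)\gtrsim 1$) and simultaneously $|\xi|\gtrsim 1/s$ (so the $r$-average of $\sin^2$ is $\gtrsim 1$), which is possible exactly when $|h|\lesssim s$, yielding the $|x-y|^{2H}$ branch; when $|h|\gtrsim s$ one instead gets the saturated branch $s^{2H}$. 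Care with the interplay $|h|$ vs.\ $s$ vs.\ $t-s$, and with which of $s,t$ is larger, is where the casework lives, but each case is a routine one-variable integral estimate.

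Finally, I would assemble the pieces: the full squared increment is (diagonal term) $+$ (tail term) $\asymp s\,(|x-y|^{2H}\wedge s^{2H}) + t\,|t-s|^{2H}$ when $s\le t$, hence $d_1((t,x),(s,y))\asymp (s\wedge t)^{1/2}\bigl(|x-y|^H\wedge(t\wedge s)^H\bigr) + (s\vee t)^{1/2}|t-s|^H = D_{1,H}((t,x),(s,y))$, using $\sqrt{A+B}\asymp\sqrt A+\sqrt B$ and $(p\wedge q)^H = p^H\wedge q^H$. The constants $c_H, C_H$ are absolute multiples of the scalar constants from the two asymptotic lemmas above, and they are independent of $(t,x),(s,y)$, which is all that \eqref{LUofNaturalMetric} requires.
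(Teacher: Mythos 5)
There is a genuine gap: your central spectral identity is wrong, and the error is not cosmetic. After Plancherel, the integrand on the common time interval $[0,s]$ (take $s\le t$) is $\bigl|\sin((t-r)|\xi|)e^{i\xi x}-\sin((s-r)|\xi|)e^{i\xi y}\bigr|^2/|\xi|^2$; no single change of variables in $r$ can turn both arguments $t-r$ and $s-r$ into the same $r$, so the ``diagonal'' does \emph{not} factor as $\sin^2(r|\xi|)\,|e^{i\xi x}-e^{i\xi y}|^2/|\xi|^2$ as your formula asserts. Expanding the square produces the mixed term $-2\sin((t-r)|\xi|)\sin((s-r)|\xi|)\cos(|\xi||x-y|)$, which couples the temporal and spatial increments; controlling it is the actual heart of the matter (in the paper this is the reduction to $\fI_1+\fI_2$ and, for the lower bound, the choice of frequency sets $E(\xi)$, $F(\xi)$ together with the case analysis $|x-y|\gtrless\alpha_H s$ and $t-s\gtrless\beta_H s$ to exclude cancellation between the two oscillations). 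Your outline treats cancellation only for the pure spatial increment, which is the easy part.

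The quantitative consequence is that both of your claimed piecewise asymptotics are off. The true ``tail'' is $\int_s^t\int_\RR \sin^2((t-r)|\xi|)\,|\xi|^{-1-2H}\,d\xi\,dr=C_H(t-s)^{1+2H}$, not $\asymp t\,|t-s|^{2H}$ (and the quantity your own formula would give, with $\int_s^t\sin^2(r|\xi|)\,dr$ in place of $\int_0^{t-s}$, is $\asymp (t-s)\,t^{2H}$, which is neither). Since $H<1/2$, one has $(t-s)^{1+2H}\ll t\,(t-s)^{2H}$ when $t-s\ll t$, so the dominant temporal contribution $(s\vee t)^{1/2}|t-s|^{H}$ in $D_{1,H}$ must come from the diagonal/cross part --- exactly the part your factorization eliminates, and for which your formula would predict the wrong (too small) modulus of continuity in time. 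The upper bound can be salvaged without your identity via the triangle inequality $d_1((t,x),(s,y))\le d_1((t,x),(s,x))+d_1((s,x),(s,y))$, as the paper does, but the lower bound cannot: one must bound the mixed oscillatory term from below, and nothing in the proposal addresses this.
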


\begin{Rmk}\label{Rmk:d_1} 
As shown in \cite[Lemma 3.6]{HW2021},  the canonical metric for the stochastic heat equation considered in their paper is approximated by
\[d_1((t,x),(s,y))\approx |x-y|^H\wedge(t\wedge s)^{\frac H2}+|t-s|^{\frac H2},\]
which differs from the expression in \eqref{EqvMetric}. This distinction leads to
% {\red diverse separation}  
a different size function $\Phi(T,L)$ in Theorem \ref{LUEESup}.

{%\red 
Previously, \cite{DalangBEJ10} characterized the canonical metric $d_1$ for the stochastic wave equation with Riesz noise associated with exponent $\beta$, establishing the local equivalence:
\begin{equation}\label{eq:DalangBEJ10_prop4.1}
d_{1}\bigl((t,x),(s,y)\bigr)
\approx \Bigl(|t-s|+\sum_{i=1}^{d} |x_{i}-y_{i}|\Bigr)^{1-\beta/2},
\end{equation}
for all $(t,x),(s,y)\in [a,a']\times [-b,b]^{d}$, where $0<a<a'<\infty$ and $0<b<\infty$.
In the one-dimensional setting with spatially correlated noise--i.e., when $H=1-\tfrac{\beta}{2}>\tfrac{1}{2}$--our estimate in \eqref{LUofNaturalMetric} recovers \eqref{eq:DalangBEJ10_prop4.1} on the domain $[a,a']\times [-b,b]$. 
It is possible, although technically demanding, that the global estimate \eqref{LUofNaturalMetric} for the canonical metric $d_{1}$ may be extended to higher dimensions. In particular, establishing an optimal lower bound would likely require constructing an analogue of the domain appearing in \eqref{E(xi)} in higher-dimensional space, which is challenging.
}
\end{Rmk}

\begin{proof}[Proof of Lemma \ref{LemMetricProp}]
%\tcb
{Without loss of generality, we assume $t>s$. By the isometry property stated in Proposition \ref{Int_Gen}, we have
\begin{align}\label{Re_d1}
    d_1^2((t,x), (s,y)) & =\EE[|u (t,x)-u(s,y)|^2] \nonumber\\
     &  = \EE\lt| \int_{0}^{s}\int_{\RR}[G_{t-r}(x-z)-G_{s-r}(y-z)] W(dr,dz) \rt|^2  \nonumber\\
    &\qquad  + \EE\lt| \int_{s}^{t}\int_{\RR}G_{t-r}(x-z) W(dr,dz) \rt|^2 \nonumber\\
    &=: d_{1,1}^2((t,x), (s,y))+ d_{1,2}^2((t,x), (s,y)).
    \end{align}}
{%\blue  
Thus, for $H\in(0,1)$ we have
\begin{align}\label{Re_d11}
   & d_{1,1}^2((t,x), (s,y)) \nonumber\\
   &\approx \int_0^s\int_{\RR} \bigg(\sin^2(|\xi| (t-r)) -2\sin(|\xi| (t-r))\cdot\sin(|\xi| (s-r)) \nonumber\\
   &\qquad\qquad\quad \cdot \cos(|\xi| |x-y|)
   +\sin^2(|\xi| (s-r))\bigg)\cdot|\xi|^{-1-2H}d\xi dr \nonumber\\
   &=:\mathfrak{I}((t,x),(s,y))\,,
\end{align}
and
\begin{align}\label{Re_d12}
    d_{1,2}^2((t,x), (s,y))&\approx\int_0^{t-s}\int_{\RR}|\xi|^{-1-2H}(\sin (r|\xi|))^2 d\xi dr\nonumber\\
 &=\int_0^{t-s}r^{2H}dr\cdot \int_{\RR}|\xi|^{-1-2H}(\sin |\xi|)^2 d\xi \nonumber\\
 &= C_H (t-s)^{2H+1}\,,
\end{align}
where $C_H$ is a constant depending only on $H$. Substituting \eqref{Re_d11} and \eqref{Re_d12} into \eqref{Re_d1}, we have
\begin{align}
    d_1^2((t,x), (s,y))
    & =:\mathfrak{I}((t,x),(s,y))+C_H (t-s)^{2H+1}.\label{d_1.J}
%=:&\mathfrak{I}+\frac{1}{2H+1} (t-s)^{2H+1}\,.\nonumber
\end{align}
}
It is straightforward to see 
\begin{align}\label{a1}
	\int_0^s\sin^2(|\xi| (t-r))dr=&\frac12 \int_0^s[1-\cos(2|\xi| (t-r))]dr \nonumber\\
	=&\frac{s}{2}+\frac{1}{4|\xi|}[\sin (2|\xi|(t-s))-\sin(2|\xi| t)]\,,
\end{align}
and
\begin{equation}\label{a2}
\int_0^s\sin^2(|\xi| (s-r))dr=\frac{s}{2}-\frac{1}{4|\xi|}\sin(2|\xi| s).
\end{equation}
Moreover, by a change of variable $s-r\rightarrow r$, we have
\begin{align}\label{a3}
\int_0^s&\sin(|\xi| (t-r))\cdot\sin(|\xi| (s-r))dr=\int_0^s\sin(|\xi| (t-s+r))\cdot\sin(|\xi| r)dr\nonumber\\
&=\int_0^s\big[\sin(|\xi| (t-s))\cdot \sin(|\xi| r)\cos(|\xi| r)+\sin^2(|\xi| r)\cdot \cos(|\xi| (t-s))\big]dr\nonumber\\
&=\frac12\sin(|\xi| (t-s))\cdot\int_0^s \sin(2|\xi| r)dr\nonumber\\
&\qquad\qquad +\frac12\cos(|\xi| (t-s))\cdot\int_0^s[1-\cos(2|\xi| r)]dr\nonumber\\
%&=\frac{1}{4|\xi|}\sin(|\xi| (t-s))\cdot [1-\cos(2|\xi| s)]+\frac{s}{2}\cos(|\xi| (t-s))-\frac{1}{4|\xi|}\cos(|\xi| (t-s))\cdot \sin(2|\xi| s)\\
&=-\frac{1}{4|\xi|}\sin(|\xi| (t+s))+\frac{1}{4|\xi|}\sin(|\xi| (t-s))+\frac{s}{2}\cos(|\xi| (t-s))\,.
\end{align}
Combining   \eqref{a1}, \eqref{a2} and \eqref{a3},   the quantity   $\mathfrak{I}((t,x),(s,y))$ defined in \eqref{Re_d11} can be simplified as
\begin{align}
\mathfrak{I}&((t,x),(s,y))\nonumber\\
&\approx s\cdot \int_{\RR^+}|\xi|^{-1-2H}\cdot\bigg(1-\cos[(t-s)|\xi|]\cdot \cos[|x-y||\xi|]\bigg)d\xi \nonumber \\
&\quad+\int_{\RR^+}|\xi|^{-2-2H}\cdot\bigg(\sin[2(t-s)|\xi|]-2\sin[(t-s)|\xi|]\cdot\cos[|x-y||\xi|] \nonumber \\
&\qquad\qquad+2\sin[(t+s)|\xi|]\cdot\cos[|x-y||\xi|]-\sin(2t|\xi|)-\sin(2s|\xi|)\bigg)d\xi \nonumber \\
&=: \fI_1((t,x),(s,y))+\fI_2((t,x),(s,y))\,.
%=:\fI_1+\fI_2 \,.
\label{Def.J1J2}
\end{align}
For simplicity, we denote $\fI_1:=\fI_1((t,x),(s,y))$ and $\fI_2:=\fI_2((t,x),(s,y))$. This is, we decompose the canonical metric into
\begin{equation}\label{NMPlancherel}
	d_1^2((t,x),(s,y))=\fI_1+\fI_2+C_H (t-s)^{2H+1}\,.
\end{equation}
We shall treat the lower   and upper bound parts of $d_1((t,x),(s,y))$ %in \eqref{LUofNaturalMetric} 
separately. Let us
first focus on the upper bound part.

\smallskip
\noindent\textbf{Step 1: The upper bound of \eqref{LUofNaturalMetric}}.
The triangle  inequality yields
%$\footnote{This might be too rough for the wave equation case.}$
	\begin{equation}\label{UConstant}
     d_1 ((t,x), (s,y))\le   d_1 ((t,x), (s,x))+  d_1 ((s,x), (s,y))\,.
	\end{equation}
Next,  we will deal with the above two terms by dividing them into 
 several more terms. 	
	
We first consider $d_1 ((t,x), (s,x))$. By \eqref{NMPlancherel},
\[
d_1^2 ((t,x), (s,x))=\fL_1+\fL_2+C_{H}(t-s)^{2H+1},
\]
where $\fL_1= \fL_1(t,s,x):=\fI_1((t,x),(s,x))$ and $\fL_2= \fL_2(t,s,x):=\fI_2((t,x),(s,x))$.
%Namely,
%\[\fL_1:=s\int_{\RR^+}|\xi|^{-1-2H}\cdot[1-\cos((t-s)|\xi|)]d\xi,\]
%and
%\begin{align*}
%\fL_2:&=\int_{\RR^+}|\xi|^{-2-2H}\bigg(2\sin[(t+s)|\xi|]-2\sin[(t-s)|\xi|]\\
%&\qquad\qquad+\sin[2(t-s)|\xi|]-\sin(2t|\xi|)-\sin(2s|\xi|)\bigg)d\xi.
%\end{align*}
By changing of variable $\xi (t-s)\rightarrow \xi$ we have
\begin{align*}
\fL_1&=s\cdot |t-s|^{2H}\int_{\RR^+}|\xi|^{-1-2H}\cdot[1-\cos(|\xi|)]d\xi\\
&\approx s\cdot |t-s|^{2H}.
\end{align*}
For the term $\fL_2$, we have
\begin{align*}
\fL_2=&\int_{\RR^+}|\xi|^{-2-2H}\cdot[\sin(2(t-s)|\xi|)-2\sin((t-s)|\xi|)]d\xi\\
&+\int_{\RR^+}|\xi|^{-2-2H}\cdot[2\sin((t+s)|\xi|)-\sin(2 t|\xi|)-\sin(2s|\xi|)]d\xi\\
=:&\fL_{21}+\fL_{22}.
\end{align*}
The change of variable $(t-s)|\xi|\rightarrow\xi$ gives 
\[\fL_{21}\approx |t-s|^{2H+1}.\]
Notice that
\begin{align*}
\fL_{22}&=\frac{-1}{1+2H}\int_{\RR^+}[2\sin((t+s)|\xi|)-\sin(2t|\xi|)-\sin(2s|\xi|)]d|\xi|^{-1-2H}\\
&=\frac{2}{2H+1}\int_{\RR^+}|\xi|^{-1-2H}[(t+s)\cos((t+s)|\xi|)-t\cos(2t|\xi|)-s\cos(2s|\xi|)]d\xi\\
&=\frac{2}{2H+1}\int_{\RR^+}|\xi|^{-1-2H}\bigg(t[1-\cos(2t|\xi|)]+s[1-\cos(2s|\xi|)]\\
&\qquad\qquad\qquad\qquad\qquad\qquad\qquad\qquad\quad-(t+s)[1-\cos(2(t+s)|\xi|)]\bigg)d\xi\\
&= C_H[t^{2H+1}+s^{2H+1}-(t+s)^{2H+1}]\leq0.
\end{align*}
Thus, we have
\[
d_1^2 ((t,x), (s,x))\leq s\cdot |t-s|^{2H}+C_H |t-s|^{2H+1},
\]
which means
\begin{equation}\label{d_11}
d_1 ((t,x), (s,x))\leq C_H\lt(s^{1/2}\cdot |t-s|^{H}+|t-s|^{H+\frac12}\rt).
\end{equation}
This gives the upper bound of $d_1 ((t,x), (s,x))$.

Now let us deal with $d_1 ((s,x), (s,y))$. An application of triangle   inequality  yields
\[d_1^2 ((s,x), (s,y))=\widetilde{\fL}_1+\widetilde{\fL}_2,\]
where $\widetilde{\fL}_1=\widetilde{\fL}_1(s,x,y):=\fI_1((s,x),(s,y))$ and $\widetilde{\fL}_2=\widetilde{\fL}_1(s,x,y):=\fI_2((s,x),(s,y))$.
%Namely,
%\[\widetilde{\fL}_1:=s\int_{\RR^+}|\xi|^{-1-2H}\cdot[1-\cos(|x-y||\xi|)]d\xi\]
%and \[
%\widetilde{\fL}_2:=2\int_{\RR^+}|\xi|^{-2-2H}\cdot \sin(2s|\xi|)\cdot[\cos(|\xi|(x-y))-1]d\xi.
%\]
By changing of variable $\xi |x-y|\rightarrow \xi$, it is easy to obtain
\[
\widetilde{\fL}_1\approx s|x-y|^{2H}.
\]
Since $\sin(2s|\xi|)\leq 2s|\xi| $ for $|\xi|\geq 0$, we have
\begin{align*}
\widetilde{\fL}_2\leq4s\int_{\RR^+}|\xi|^{-1-2H}\cdot[1-\cos(|\xi||x-y|)]d\xi\approx s|x-y|^{2H}.
\end{align*}
On the other hand,
\begin{align*}
      d_1^2((s,x),(s,y))
      =&\EE[|u (s,x)-u (s,y)|^2]\\
      \leq& 2\blc\EE[|u (s,x)|^2]+\EE[|u (s,y)|^2]\brc \leq C_H s^{2H+1}\,.
\end{align*}
Thus, it holds that
\begin{equation}\label{d_12}
d_1((s,x),(s,y))\leq C_H s^{\frac12}\cdot (|x-y|^H\wedge s^H).
\end{equation}
This shows the upper bound of $d_1((s,x),(s,y))$.

Combining \eqref{d_11} and \eqref{d_12}, we obtain the upper bound of $d_1((s,x),(t,y))$ as follows: 
\begin{align}\label{upper_d1}
d_1((s,x),(t,y))\leq&d_1((s,x),(t,y))+d_1((s,x),(s,y))\nonumber\\
\leq & C_H (s\wedge t)^{\frac12}\cdot|t-s|^{H} \nonumber\\
&+ C_{H}(t\wedge s)^{\frac 12}\cdot[|x-y|^H\wedge (t\wedge s)^{H}]+C_{H}|t-s|^{H+\frac 12}.
\end{align}
%So we finish the proof of the upper bound part of \eqref{LUofNaturalMetric}.

\smallskip
\noindent\textbf{Step 2: The lower bound of \eqref{LUofNaturalMetric}}.
Now we focus on establishing the lower bound for $d_1((t,x),(s,y))$.  We will divide the proof into two cases based on the value of $|x-y|$: $|x-y|>\alpha_H s$ and  $|x-y|\leq\alpha_H s$, where $\alpha_H>0$ is a constant to be determined later.
In particular, when  $|x-y|\leq\alpha_H s$, we further subdivide into another two cases: $(t-s)>\be_H s $ and $(t-s)<\be_H s $, for some constant $\be_H>0$.

\medskip
\noindent\emph{\textbf{Case 1:} $|x-y|>\alpha_H s$}.  Define the intervals \[[a_n,b_n]:=\lt[\frac{\pi}{3}+2n\pi,\frac{\pi}{2}+2n\pi\rt]\]
for nonnegative integer $n\in \mathbb{Z}_+$, and set
\begin{align}\label{E(xi)}
%E(&\xi):=\lt\{\xi\in\RR_+: s|\xi|>R \hbox{ and } 0<\cos(|x-y||\xi|)<\frac12 \rt\}\\
E(\xi):=\lt\{\xi\in\RR_+:  s|\xi|>\frac{R}{\alpha_H} \hbox{ and } |x-y||\xi| \in\bigcup_{n=0}^{\infty}[a_n,b_n]\rt\}\,,
\end{align}
where $R>0$ is a sufficiently large  (but fixed) constant.

It is straightforward to verify that the integrand inside $\int_{0}^t\int_{\RR} d\xi dr$ of \eqref{d_1.J} is non-negative. Therefore, we have
\begin{align}\label{fI_prime}
\fI&((t,x),(s,y))=\fI_1+\fI_2\nonumber\\
&\geq s\cdot \int_{E(\xi)}|\xi|^{-1-2H}\cdot\bigg(1-\cos[{(t-s)}|\xi|]\cdot \cos[|x-y||\xi|]\bigg)d\xi\nonumber\\
&\quad+\int_{E(\xi)}|\xi|^{-2-2H}\cdot\bigg(2\sin[(t+s)|\xi|]\cdot\cos[|x-y||\xi|]+\sin[2(t-s)|\xi|]\nonumber\\
&\qquad\qquad-2\sin[(t-s)|\xi|]\cdot\cos[|x-y||\xi|]-\sin(2t|\xi|)-\sin(2s|\xi|)\bigg)d\xi\nonumber\\
&=: \fI'_1+\fI'_2 \,.
\end{align}
For the term $\fI'_1$, by changing of variable $\widehat{\xi}=|x-y|\xi$ and noticing that $0<\cos(|x-y||\xi|)<\frac12$ on the set $E(\xi)$  we have
\begin{align}\label{est_I_1}
%\fI'_1&=s\int_{E(\xi)}|\xi|^{-1-2H}\big\{[1-\cos(|x-y|\xi)]+\cos(|x-y|\xi)[1-\cos(|t-s|\xi)]\big\}d\xi\nonumber\\
\fI'_1&\geq s\int_{E(\xi)}|\xi|^{-1-2H}[1-\cos(|x-y||\xi|)]d\xi\nonumber\\
&\geq \frac{s}{2}|x-y|^{2H}\int_{E(\widehat{\xi})}|\widehat{\xi}|^{-1-2H}d\widehat{\xi},
\end{align}
where the set $E(\widehat{\xi})$ is given by
\[E(\widehat{\xi}):=\lt\{\widehat{\xi}\in\RR_+:  |\widehat{\xi}|>\frac{|x-y|}{\alpha_Hs}R \hbox{ and }|\widehat{\xi}|\in\bigcup_{n=0}^{\infty}[a_n,b_n]\rt\}.\]
Define
\begin{equation}\label{N(an)}
N:=\inf\lt\{n: a_n>\frac{|x-y|}{\alpha_H s}R\rt\}.
\end{equation}
We know $a_N>\frac{|x-y|}{\alpha_H s}R>a_{N-1}=a_N-2\pi$, which means $a_N<\frac{|x-y|}{\alpha_H s}R+2\pi.$ Thus,
\begin{align}\label{est_intxi}
\int_{E(\widehat{\xi})} |\widehat{\xi}|^{-1-2H}d\widehat{\xi} &\geq \sum_{n=N}^{\infty}\int_{a_n}^{b_n}|\widehat{\xi}|^{-1-2H}d\widehat{\xi}\nonumber\\
&\geq \frac{1}{12}\sum_{n=N}^{\infty}\int_{a_n}^{a_n+2\pi}|\widehat{\xi}|^{-1-2H}d\widehat{\xi}\nonumber\\
&=\frac{1}{12}\int_{a_N}^{\infty}|\widehat{\xi}|^{-1-2H}d\widehat{\xi}\nonumber\\
&= c_H|a_N|^{-2H}\geq c_H\lt|\frac{|x-y|}{\alpha_H s}R+2\pi\rt|^{-2H}.
\end{align}
Combining \eqref{est_I_1} and \eqref{est_intxi} yields
\begin{align}\label{est_fI_pr}
\fI'_1&\geq c_H s|x-y|^{2H}\frac{\alpha_H^{2H}s^{2H}}{(|x-y|R+2\pi s\alpha_H)^{2H}}\nonumber\\
&= c_H s^{2H+1}\frac{\alpha_H^{2H}}{(R+\frac{2\pi s\alpha_H}{|x-y|})^{2H}}\nonumber\\
&\geq s^{2H+1}\frac{\alpha_H^{2H}}{(R+2\pi)^{2H}},
\end{align}
where  we used  the key assumption $|x-y|>\alpha_H s$ in the last inequality above.

Using the elementary identity $\sin(2t|\xi|)+\sin(2s|\xi|)=2\sin((t+s)|\xi|)\cos((t-s)|\xi|),$
the term $\fI'_2 $    can be simplified as
\begin{align}\label{I_2_pr}
%\fI'_2=&\int_{E(\xi)}|\xi|^{-2-2H}\bigg\{[\sin(2(t-s)|\xi|)-2\sin((t-s)|\xi|)\cos(|x-y||\xi|)]\\
%&\qquad\qquad\qquad+2\sin((t+s)|\xi|)\cdot[\cos(|x-y||\xi|)-\cos(|t-s||\xi|)]\bigg\}d\xi\\
\fI'_2=&\int_{E(\xi)}|\xi|^{-2-2H}[2\sin((t+s)|\xi|)-2\sin((t-s)|\xi|)] \nonumber\\
&\qquad\qquad\qquad\qquad\qquad\cdot[\cos(|x-y||\xi|)-\cos(|t-s||\xi|)]d\xi\nonumber\\
=&4\int_{E(\xi)}|\xi|^{-2-2H}\sin(s|\xi|)\cos(t|\xi|)\cdot\lt[\cos(|x-y||\xi|)-\cos(|t-s||\xi|)\rt]d\xi.
\end{align}
As a consequence, we see
\begin{align}\label{abs_fI_2}
|\fI'_2|&\ls \int_{E(\xi)}|\xi|^{-2-2H}|\sin(s|\xi|)|d\xi \ls \int_{E(\xi)}|\xi|^{-2-2H}\frac{\alpha_H s|\xi|}{R}d\xi\nonumber\\
&\ls \frac{s}{R}\int_{\{\xi\in \RR_+:s|\xi|>\frac{R}{\alpha_H}\}}|\xi|^{-1-2H}d\xi \ls \frac{s^{2H+1}\alpha_H^{2H+1}}{R^{2H+1}}.
\end{align}
Therefore, for the case $|x-y|>\alpha_H s,$  from \eqref{est_fI_pr} and \eqref{abs_fI_2} we have
\begin{align}\label{Est_I_case1}
\fI((t,x),(s,y))&=\fI_1+\fI_2\geq\fI'_1+\fI'_2\geq \fI'_1-|\fI'_2|\nonumber\\
&\geq c_{1,H}\frac{s^{2H+1}\alpha_H^{2H}}{(R+2\pi)^{2H}}-c_{2,H}\frac{s^{2H+1}\alpha_H^{2H+1}}{R^{2H+1}}\nonumber\\
&\geq c_H\cdot  s^{2H+1}  \geq c_H\cdot s\lt(|x-y|\wedge  s\rt)^{2H} ,
\end{align}
when $R$ is sufficiently large.

This is, we prove that for any $t>s$,  there exist a positive $c_H>0$ such that
\begin{equation}\label{est_case1}
\fI((t,x),(s,y))\geq c_H\cdot s\lt(|x-y|\wedge  s\rt)^{2H}.
\end{equation}

\medskip

\noindent\emph{\textbf{Case 2:} $|x-y|\leq\alpha_Hs$}. Recall that $\fI_1$ and $\fI_2$ are defined by \eqref{Def.J1J2}. By the trivial identities
\begin{align*}
	1-\cos&(|t-s||\xi|)\cdot \cos(|x-y||\xi|) \\
	=&[1-\cos(|t-s||\xi|)]+\cos(|t-s||\xi|)[1-\cos(|x-y||\xi|)] \\
	=&[1-\cos(|x-y||\xi|)]+\cos(|x-y||\xi|)[1-\cos(|t-s||\xi|)] \,,
\end{align*}
and by a change   of variable $(t-s)\xi\rightarrow \xi,$ we have
\begin{align}\label{I_1_lower}
%\fI_1&\geq s\cdot \int_{D_+}|\xi|^{-1-2H}\cdot\bigg(1-\cos[(t-s)|\xi|]\cdot \cos[(x-y)|\xi|]\bigg)d\xi\nonumber\\
\fI_1&=s \int_{\RR_+}|\xi|^{-1-2H}\big\{\lt(1-\cos(|t-s||\xi|)\rt)\nonumber\\
&\qquad\qquad\qquad\qquad\quad+\cos(|t-s||\xi|)\lt(1-\cos(|x-y||\xi|)\rt)\big\}d\xi\nonumber\\
&=s|t-s|^{2H} \int_{\RR_+}|\xi|^{-1-2H}\bigg\{(1-\cos|\xi|)\nonumber\\
&\qquad\qquad\qquad\qquad\quad+(\cos|\xi|)\lt[1-\cos\lt(\frac{|x-y|}{|t-s|}|\xi|\rt)\rt]\bigg\}d\xi\nonumber\\
&\geq s|t-s|^{2H}\int_{D_+}|\xi|^{-1-2H}(1-\cos|\xi|)d\xi\nonumber\\
&\gs s|t-s|^{2H},
\end{align}
with $D_+:=\{\xi:\cos\xi>0\}$.
Similarly, we can obtain that
\begin{equation}\label{fI_1x,y}
\fI_1\geq c_H s|x-y|^{2H}.
\end{equation}
%Thus, we have
%\begin{equation}\label{est_fI_1}
%\fI_1\gs s\lt[|t-s|^{2H}+|x-y|^{2H}\rt].
%\end{equation}

As for the term $\fI_2$ in this case, using the  same  way  as  that for  $\fI'_2$ (e.g.  \eqref{I_2_pr}), we have
\begin{align*}
\fI_2&=4\int_{\RR_+}|\xi|^{-2-2H}\sin(s|\xi|)\cos(t|\xi|)\cdot\lt[\cos(|x-y||\xi|)-\cos(|t-s||\xi|)\rt]d\xi\\
&=4\int_{\RR_+}|\xi|^{-2-2H}\sin(s|\xi|)\cos(t|\xi|)\cdot\lt[\cos(|x-y||\xi|)-1\rt]d\xi\\
&\qquad+4\int_{\RR_+}|\xi|^{-2-2H}\sin(s|\xi|)\cos(t|\xi|)\cdot\lt[1-\cos(|t-s||\xi|)\rt]d\xi\\
&=:\fI_{21}+\fI_{22}.
\end{align*}
It is not difficult to obtain that
\begin{align*}
|\fI_{21}|&\ls\int_{\RR_+}|\xi|^{-2-2H}\lt[1-\cos(|x-y||\xi|)\rt]d\xi\\
&\leq C_H|x-y|^{2H+1}.
\end{align*}
Similarly, we have
\begin{align*}
|\fI_{22}|\leq C_H|t-s|^{2H+1}.
\end{align*}
Thus, when $|x-y|<\alpha_H s,$ by \eqref{fI_1x,y} we have
\begin{align*}
\frac12\fI_1+\fI_{21}&\geq \frac12\fI_1-|\fI_{21}|\\
&\geq c_{1,H}\cdot s|x-y|^{2H}-c_{2,H}\cdot|x-y|^{2H+1}\\
&\geq c_{1,H}\cdot s|x-y|^{2H}-c_{2,H}\cdot\alpha_H s|x-y|^{2H}\\
&\geq c_{3,H}\cdot s|x-y|^{2H}=c_{3,H}\cdot s(|x-y|\wedge   s)^{2H},
\end{align*}
with $c_{3,H}>0$ provided that we choose $\alpha_H$ such that $0<\al_H<\lt(\frac{c_{1,H}}{c_{2,H}}\wedge 1\rt)$.

Therefore, we show that when $|x-y|<\alpha_Hs$, it holds
\begin{equation}\label{not_im1}
\frac12\fI_1+\fI_{21}\geq c_{3,H}\cdot s(|x-y|\wedge   s)^{2H}.
\end{equation}
To obtain the lower bound of $\fI((t,x),(s,y))$, we have to find the lower bound of $(\frac12\fI_1+\fI_{22})$.   We shall show this by considering the following two cases: $ (t-s)<\be_H s $ and $ (t-s)\geq \be_H s $ for some constant $\be_H>0$.
\medskip

%\begin{align}\label{est_fI_case31}
%\fI((t,x),(s,y))&=\fI_1+\fI_2\geq\frac12 \fI_1+\fI_{21}\nonumber\\
%&\geq c_{H}\cdot s(|x-y|\wedge   s)^{2H}+c_{H}\cdot s(|t-s|\wedge   s)^{2H}.
%\end{align}

\noindent\emph{The case $|x-y|<\alpha_H s$ and $|t-s|<\be_H s$}. In the same manner,  from \eqref{I_1_lower} we see that
\begin{align*}
 \frac12\fI_1+\fI_{22}&\geq c_{H}\cdot s|t-s|^{2H}=c_{H}\cdot s(|t-s|\wedge   s)^{2H}.
\end{align*}
Consequently, from \eqref{not_im1} and the above inequality,
\begin{align}\label{est_fI_case3}
\fI((t,x),(s,y))&=\fI_1+\fI_2=\frac12 \fI_1+\fI_{21}+\frac12 \fI_1+\fI_{22}\nonumber\\
&\geq c_{H}\cdot s(|x-y|\wedge   s)^{2H}+c_{H}\cdot s(|t-s|\wedge   s)^{2H}.
\end{align}
\medskip
\noindent\emph{The  case $|x-y|<\alpha_H s$ and $(t-s)\geq\be_H s $ }.   Let us denote
\begin{align}\label{F(xi)}
%E(&\xi):=\lt\{\xi\in\RR_+: s|\xi|>R \hbox{ and } 0<\cos(|x-y||\xi|)<\frac12 \rt\}\\
F(\xi):=\lt\{\xi\in\RR_+:  s|\xi|>\frac{R}{\be_H} \hbox{ and } (t-s)|\xi| \in\bigcup_{n=0}^{\infty}[a_n,b_n]\rt\}\nonumber\,,
\end{align}
and
\[
\widetilde{N}:=\inf\lt\{n: a_n>\frac{|x-y|}{\be_H s}R\rt\}.
\]
This is, $0<\cos[(t-s)|\xi|]<\frac12$ on the set $F(\xi)$. Similarly to the case $|x-y|>\alpha_Hs$, we can prove that
\begin{align}\label{Est_I_case2}
\fI((t,x),(s,y))\geq c_H\cdot s\lt(|t-s|\wedge  s\rt)^{2H}.
\end{align}
Combining \eqref{est_case1}, \eqref{est_fI_case3} and \eqref{Est_I_case2} yields  that when $t>s$
\[\fI((t,x),(s,y))\geq c_{H}\cdot s(|x-y|\wedge   s)^{2H}+c_{H}\cdot s(|t-s|\wedge   s)^{2H},\]
which indicates from \eqref{NMPlancherel} that
\begin{align}\label{lower_d1}
d_1((t,x),(s,y))&\geq c_{H}\cdot (s\wedge t)^{\frac12}\cdot\big[|x-y|\wedge   (s\wedge t)\big]^{H}\nonumber\\
&\quad+c_{H}\cdot (s\wedge t)^{\frac12}\cdot\big[|t-s|\wedge   (s\wedge t)\big]^{H}+c_{H}|t-s|^{H+\frac12}.
%&=: c_{H}\cdot (s\wedge t)^{\frac12}\cdot\big[|x-y|\wedge   (s\wedge t)\big]^{H}+\tilde{d_1}((t,x),(s,y))\,.
\end{align}
Denote \[\tilde{d_1}((t,x),(s,y):=c_{H}\cdot (s\wedge t)^{\frac12}\cdot\big[|t-s|\wedge   (s\wedge t)\big]^{H}+c_{H}|t-s|^{H+\frac12}.\]
 It is not difficult to verify that when $t>s$, $(t-s)\geq\be_H s $ and $\be_H>0$,
%the following inequality holds
%by discussing two cases:  (I) $s<t<(1+\be_H)s$  and (II) $t>(1+\be_H)s$.
%\footnote{Details can be seen form Remark \ref{3.2}.}
\begin{align}\label{2_est_d1}
\tilde{d_1}((t,x),(s,y))&=c_{H}s^{1/2}[(t-s)\wedge   s]^{H}+c_{H}(t-s)^{H+1/2}\nonumber\\
&\geq \lt(\frac{c_{H}\be_H}{2}\wedge c_{H}\rt)\cdot s^{1/2}(t-s)^{H}+\frac{c_{H}}{2}(t-s)^{H+1/2}\nonumber\\
&=c_{1,H}\cdot s^{1/2}(t-s)^{H}+c_{2,H}(t-s)^{H+1/2}.
\end{align}
 Accordingly, from \eqref{lower_d1} and \eqref{2_est_d1}, we obtain the lower bound
\begin{align}\label{Lower_d1}
d_1((t,x),(s,y))&\geq c_{H}\cdot (s\wedge t)^{\frac12}\cdot\big[|x-y|\wedge   (s\wedge t)\big]^{H}\nonumber\\
&\quad+c_{H}\cdot (s\wedge t)^{\frac12}\cdot|t-s|^{H}+c_{H}|t-s|^{H+\frac12}\,.
\end{align}

As a result, combining \eqref{upper_d1} and \eqref{Lower_d1}, we have
\begin{align*}
 d_1((t,x),(s,y))\approx
 (s\wedge t)^{\frac 12}\cdot[|x-y|^H\wedge (s\wedge t)^{H}]&+(s\wedge t)^{\frac12}\cdot|t-s|^{H}\\
 &+
|t-s|^{H+\frac 12}.
\end{align*}
Moreover, it is clear that
\[
 (s \vee t)^{\frac12} \approx (s\wedge t)^{\frac 12}+|t-s|^{\frac 12}\,.
\]
Thus, 
\begin{align*}
 d_1((t,x),(s,y)) &\approx (s\wedge t)^{\frac 12}\cdot\big[|x-y|^H{\wedge  } (s\wedge t)^{H}\big]+(s \vee t)^{\frac12}|t-s|^{H}.
 \end{align*}
Therefore,  the proof is complete.
\end{proof}

%\begin{Rmk}
%
%  The above property of  the canonical metric can also be written as
%   \begin{align}\label{EqvMetric}
%   	d_1((t,x),(s,y))\approx d_{1,H}((t,x), (s, y)):&=(t\wedge s)^{\frac 12}\cdot\big[|x-y|^H{\wedge  } (t\wedge s)^{H}\big] \nonumber\\
%      &\qquad + (s\wedge t)^{\frac12}\cdot|t-s|^{H}+|t-s|^{H+\frac 12}.
%   \end{align}
%
%\end{Rmk}

%\begin{Rmk}\label{3.2}
%	\footnote{can we delete this remark?}
%	We will see that \eqref{lower_d1} can be bounded below by the same form in \eqref{LUofNaturalMetric}. Assume $t>s$, and $\beta_H>0$. Clearly, we need to discuss two cases:  (I) $s<t<(1+\be_H)s$;  and (II) $t>(1+\be_H)s$. For the case (I), they are exactly same. So, we shall focus on case (II). However, it is not hard to see
%	\begin{align*}
%		c_{1,H} s[(t-s)\wedge s]^{2H}+c_{2,H} (t-s)^{2H+1}=&c_{1,H} s^{2H+1}+c_{2,H} (t-s)\cdot(t-%s)^{2H} \\
%		\geq& \frac{c_{2,H}\be_H}{2}s\cdot (t-s)^{2H}+\frac{c_{2,H}}{2} (t-s)^{2H+1}\,.
%	\end{align*}

%As a result, we can show that
%	\begin{align*}
%		c_{1,H} s[(t-s)\wedge s]^{2H}&+c_{2,H} (t-s)^{2H+1}\\
%\geq& \Blk c_{1,H}\wedge  \frac{c_{2,H}\be_H}{2}\Brk s(t-s)^{2H}+\frac{c_{2H}}{2} (t-s)^{2H+1} \\
%		=&c'_{1,H} s(t-s)^{2H}+c'_{2,H}(t-s)^{2H+1}\,.
%	\end{align*}
%\end{Rmk}

\begin{proof}[Proof of Theorem \ref{LUEESup}]
We follow a similar approach to the proof of \cite[Theorem 1.1]{HW2021}. However, the equivalent canonical metric in Lemma \ref{LemMetricProp} is more intricate than the one considered in \cite{HW2021}. We will highlight the differences and omit the details that are analogous.

\smallskip
\noindent\textbf{Step 1:} We show   the first part in Theorem \ref{LUEESup}. Still we shall use Talagrand's majorizing measure theorem (Theorem \ref{Talagrand}) to establish the upper bound and use the Sudakov minoration theorem (Theorem \ref{Sudakov}) to obtain the lower bound.
Let
\[
 \bT=[0,T]\quad{\rm and}\quad   \bL=\LL\,.
\]
Since { $u (t,x)$ is a symmetric and  centered Gaussian process, }  by Lemma \ref{equiv} in \ref{appenA}, we have
\begin{equation}\label{supE}
   \EE \lk\sup_{(t,x)\in\bT\times\bL} |u (t,x)|\rk\approx \EE \lk\sup_{(t,x)\in\bT\times\bL} u (t,x)\rk\,.
\end{equation}
Hence, to show \eqref{SupEEasmp} it is equivalent to showing
\begin{equation}
 { c_H\, \Phi(T, L)\le
\EE \lk\sup_{t\in \bT, x\in \bL}  u (t,x) \rk \le C_H\,  \Phi(T, L)\,,}
 \label{SupEEasmp.a}
    \end{equation}
 where $\Phi(T, L)$ is defined in \eqref{e.def_rho}, which differs from the corresponding quantity in \cite{HW2021}.

 For the upper bound in \eqref{SupEEasmp}, following an approach analogous to that in \cite[Theorem 1.1]{HW2021},  %\tcb
 {
 we choose the admissible sequences $(\mathcal{A}_n)$ as uniform partitions of $\bT\times \bL$ such that card$(\mathcal{A}_n)\leq 2^{2^n}$.} More precisely,  we partition $\bT\times \bL=[0,T]\times [-L,L]$  as follows:
 \begin{equation*}
 \begin{cases}  [0,T] =\bigcup\limits_{j=0}^{2^{2^{n-1}}-1}\lt[j\cdot2^{-2^{n-1}}T,(j+1)\cdot2^{-2^{n-1}}T\rt)\,, \\ \\
   [-L,L] =\bigcup\limits_{k=-2^{2^{n-2}}}^{2^{2^{n-2}}-1}\lt[k\cdot2^{-2^{n-2}}L,(k+1)\cdot2^{-2^{n-2}}L\rt)\,.
   \end{cases}
 \end{equation*}
 By Talagrand's majorizing measure theorem (Theorem \ref{Talagrand}), we have
  \begin{equation*}
  	\EE \lk\sup_{(t,x)\in\bT\times\bL} u (t,x)\rk\leq C\gamma_2(T,d)\leq C\sup_{(t,x)\in\bT\times\bL} \sum_{n\geq 0}2^{n/2} \diam(A_n(t,x))\,,
  \end{equation*}
where
 \[
  A_n(t,x)=\lt[j\cdot 2^{-2^{n-1}}T,(j+1)\cdot 2^{-2^{n-1}}T\rt)\times \lt[k\cdot 2^{-2^{n-2}}L,(k+1)\cdot 2^{-2^{n-2}}L\rt)
 \]
satisfies $j\cdot 2^{-2^{n-1}}T\leq t< (j+1)\cdot 2^{-2^{n-1}}T$ and $k\cdot 2^{-2^{n-2}}L \leq x<(k+1)\cdot 2^{-2^{n-2}}L$.
Now the diameter of $A_n(t,x)$ with respect to  $D_{1,H}((t,x), (s,y))$ defined by \eqref{EqvMetric} can be estimated as
 \[
   \diam(A_n(t,x))\leq C_{H}T^{\frac 12}\lc T^{H}\wedge (2^{-H 2^{n-2}}L^H) \rc+C_H 2^{-H2^{n-1}}T^{H+\frac 12}.
  \]
 Then by Theorem \ref{Talagrand} and  by dividing the discussion into  two cases $L>T$ and $L\leq T$, 
 we can prove the upper bound part of \eqref{SupEEasmp} in a similar way to that of
 \cite[Theorem 1.1]{HW2021}.

For the  lower bound part of \eqref{SupEEasmp},  choosing a sequence
$\{u(T,x_i),i=0,1,\cdots,\pm N\}$ when $L>T$,  where  
  \[
   x_0=0,x_{\pm1}=\pm T,\cdots, x_{\pm N}=\pm N T\,,
  \]
  and %\tcb
  {$N=\lfloor L/T \rfloor$ (note that $N \geq 1$ only when $L>T$)}. For this sequence,  we have
   \[
   D_{1,H}((T,x_i),(T,x_j))\geq c_H T^{\frac 12+H}=\delta  \quad\hbox{if $i\neq j$}\,.
   \]
When $L\leq T$, choosing sequence $\{u(T/2,0),u(T,0)\}$ holds that 
   \[
   D_{1,H}((T/2,0),(T,0))\geq c_H T^{\frac 12+H}=\delta  \,.
   \]
Then applying the Sudakov minoration theorem (Theorem \ref{Sudakov}), we obtain the following lower bound for $\EE [\sup_{(t,x)\in\bT\times\bL} u (t,x)]$:
 \[\EE \lk\sup\limits_{(t,x)\in\bT\times\bL} u (t,x)\rk\geq T^{\frac{1}{2}+H}\Phi_0(T, L).\]
 This proves the first part of  Theorem \ref{LUEESup}.

\smallskip
\noindent\textbf{Step 2:} Now we  prove the second part of Theorem \ref{LUEESup}. Denote \[\bL:=[-L,L], \quad \bT^{\alpha}=[0,n^{\alpha}].\]
We first prove \eqref{Supasmp} for $T=n^\al$ for some positive $\al$ to be determined and for sufficiently large $L\geq n^{(1+\varepsilon)\al}$. By the first part of Theorem \ref{LUEESup}, we have
	\[
	 \EE \lk\sup_{(t,x)\in\bT_n^{\alpha}\times\bL} u (t,x)\rk\geq c_H\left(
        n^{\al(H+\frac12)}+n^{\al(H+\frac12)}\sqrt{\log_2\lc\frac{L}{n^{\alpha}}\rc}\right)
	\]
    for some positive number $c_H$. Moreover,  it follows from direct computation that
    \[\sigma^2_{H}:=\sigma^2_{H}(\bT_n^\alpha\times\bL)=\sup\limits_{(t,x)\in\bT_n^\alpha\times\bL}\EE[|u (t,x)|^2]=C_H n^{\al(1+2H)}.\]
    Set $\lambda_{H}:=\lambda_{H}(\bT^\alpha\times\bL)=\frac{1}{2}\EE \lk\sup_{(t,x)\in\bT^\alpha\times\bL} u (t,x)\rk$.
Similar to the proof of  \cite[Theorem 1.1]{HW2021},  and  using Borell's inequality we have
\begin{equation}\label{BorellConsqL}
	\begin{split}
		\bP&\lt\{\sup_{(t,x)\in\bT^{\alpha}\times\bL} u (t,x)<\frac 12 \EE\Blk\sup_{(t,x)\in\bT^{\alpha}\times\bL}u
    (t,x)\Brk\rt\} \\
    & \leq2\exp\lc -\frac{\lambda^2_{H}}{2\sigma^2_{H}}\rc\leq C_H n^{-\alpha\varepsilon\cdot c_H}\,,
	\end{split}
	\end{equation}
	and
	 \begin{equation}\label{BorellConsqU}
    \begin{split}
     \bP&\lt\{\sup_{(t,x)\in\bT^{\alpha}\times\bL} u (t,x)>\frac 32 \EE\Blk\sup_{(t,x)\in\bT^{\alpha}\times\bL}u (t,x)\Brk\rt\}\\
    &\leq2\exp\lc -\frac{\lambda^2_{H}}{2\sigma^2_{H}}\rc
		\leq C_H n^{-\alpha\varepsilon\cdot c_H}\,.
    \end{split}
    \end{equation}
Then, by Borel-Cantelli's lemma with $\alpha>1/\varepsilon c_H$ and the property of the supremum function, we obtain the upper bound and lower bound parts of \eqref{Supasmp}. The proof is complete. 
\end{proof}

Now we give the proof of Theorem \ref{LUHolderEESup}.

\begin{proof}[Proof of Theorem \ref{LUHolderEESup}]
Recall the notation
\begin{equation}\label{Diff}
  \begin{split}
     \Delta _h u(t,x):=&u (t,x+h)-u(t,x)\\
       =&\int_{0}^{t}\int_{\RR} [G_{t-s}(x+h-z)-G_{t-s}(x-z)] W(ds,dz),
  \end{split}
  \end{equation}
  where $t>0$ and $h\neq 0$ are fixed . Without loss of generality, we assume $h>0$ in the following.
  Denote the associated %\tcb
  {canonical metric} as
    \begin{equation*}
    d_{2, t,h}(x,y):=\lc\EE| \Delta _h u(t,x)- \Delta _h u (t,y))|^2\rc^{\frac 12} \,.
  \end{equation*}
 %  Similarly  to \eqref{NMPlancherel},

 We first establish the upper bound in \eqref{SupHolderEEasmp}.  To this end, we need the upper bound of the metric $d_{2, t,h}(x,y)$. By applying Plancherel's identity with respect to $z$ in $d_{2, t,h}(x,y)$, and using the elementary inequality $1-\cos(x)\leq C_{\theta}x^{2\theta}$  for any  $\theta\in(0,H)$,
  \begin{align}\label{def_J2}
     d_{2, t,h}^2(x,y)&=C_H \int_{\RR_+}\lt(\frac t2 -\frac{\sin(2|\xi|t)}{4|\xi|}\rt)(1-\cos(|x-y|\xi))\nonumber\\
     &\qquad\qquad\qquad\qquad\qquad\qquad\qquad\quad\cdot(1-\cos(h\xi))\cdot\xi^{-1-2H}d\xi\nonumber\\
     &\leq  C_{H,\theta}h^{2\theta} \int_{\RR_+}\lt(t -\frac{\sin(2|\xi|t)}{2|\xi|}\rt)(1-\cos(|x-y|\xi))\cdot\xi^{2\theta-1-2H}d\xi\nonumber\\
     &=: C_{H,\theta}h^{2\theta}\cJ_2(t,x,y)\,,
  \end{align}
where  $\cJ_2(t,x,y)$ is equal to $\mathfrak{I}((t,x),(t,y))$ defined by \eqref{d_1.J}. Thus, analogous  to the proof of upper bound for $\mathfrak{I}((t,x),(t,y))$, with $H$ replaced  by $H-\theta$,   we obtain
\begin{equation*}
\cJ_2(t,x,y)\leq C_{H,\theta}t\cdot\lt(|x-y|^{2H-2\theta}\wedge  t^{2H-2\theta}\rt).
\end{equation*}
This gives
\begin{equation}\label{est_d2}
d_{2, t,h}(x,y)\leq C_{H,\theta}t^{\frac12}h^{\theta}\lt(|x-y|\wedge   t\rt)^{H-\theta}.
\end{equation}
As  in   the proof of Theorem \ref{LUEESup},   we take 
\[
  A_n(x)=\lt[k\cdot 2^{-2^{n-2}}L,(k+1)\cdot 2^{-2^{n-2}}L\rt)
 \]
and partition  $[-L,L]$ as 
\[[-L,L]=\bigcup\limits_{k=-2^{2^{n-2}}}^{2^{2^{n-2}}-1}A_n(x)=\bigcup\limits_{k=-2^{2^{n-2}}}^{2^{2^{n-2}}-1}\lt[k\cdot2^{-2^{n-2}}L,(k+1)\cdot2^{-2^{n-2}}L\rt).\]
The diameter of $A_n(x)$ with respect to $d_{2,t,h}(x,y)$ can be estimated as
\[\diam(A_n(x))\leq C_{H,\theta}t^{1/2}\cdot h^{\theta}\lt[\lt(2^{-2^{n-2}}L\rt)\wedge t\rt]^{H-\theta}.\]
Thus, by invoking Talagrand's majorizing measure theorem   (Theorem \ref{Talagrand}), the upper bound of $\EE [\sup_{x\in\bL} \Delta _h u(t,x)]$ can be obtained as follows: 
\begin{equation}\label{upper1}
  \EE \lk\sup_{x\in\bL} \Delta _h u(t,x)\rk\leq C_{ H,\theta} |h|^{\theta} t^{1/2+H-\theta} \Phi_0(t,L)\,.
  \end{equation}

Now we turn to prove the lower  bound  in \eqref{SupHolderEEasmp}.  To   this end, we  need to find the lower bound of $d_{2, t,h}(x,y)$. 
%Recall that $E(\xi)$ defined by \eqref{E(xi)} is
%\begin{align*}
%%E(&\xi):=\lt\{\xi\in\RR_+: s|\xi|>R \hbox{ and } 0<\cos(|x-y||\xi|)<\frac12 \rt\}\\
%\tcr{G(\xi):=\lt\{\xi\in\RR_+:  t|\xi|>\frac{R}{\alpha_H} \hbox{ and } |x-y||\xi| \in\bigcup_{n=0}^{\infty}\lt[\frac{\pi}{3}+2n\pi,\frac{\pi}{2}+2n\pi\rt]\rt\}}\,,
%\end{align*}
%and  $0<\cos[|\xi||x-y|]<1/2$ on the set $E(\xi)$. 
Since for fixed $0<c_0<\frac{\pi}{2h}$, $\sin(x)<x$ when $x\geq c_0$, we have
%\begin{align*}
%d_{2, t,h}^2(x,y)&=C_H \int_{\RR_+}\lt[\frac t2 -\frac{\sin(2|\xi|t)}{4|\xi|}\rt][1-\cos(|x-y|\xi)][1-\cos(h\xi)]\cdot\xi^{-1-2H}d\xi\nonumber\\
%&\geq t\int_{G(\xi)}(1-\cos (h|\xi|) )|\xi|^{-1-2H}\lt(1-\frac{\sin(2|\xi|t)}{2|\xi|t}\rt)d\xi\\
%%&=t \int_{E(\xi)}(1-\cos (h|\xi|) )|\xi|^{-1-2H}d\xi-\frac12 \int_{E(\xi)}(1-\cos (h|\xi|) )|\xi|^{-2-2H}\sin(2|\xi|t)d\xi\\
%&\geq c_Ht\int_{G(\xi)}(1-\cos (h|\xi|) )|\xi|^{-1-2H}d\xi\\
%&\geq c_H t h^{2H}\,.
%\end{align*}
\begin{align*}
d_{2, t,h}^2(x,y)
%&=C_H \int_{\RR_+}\lt[\frac t2 -\frac{\sin(2|\xi|t)}{4|\xi|}\rt][1-\cos(|x-y|\xi)][1-\cos(h\xi)]\cdot\xi^{-1-2H}d\xi\nonumber\\
&= \frac{C_H  t}{2}\int_{\RR_+}\lt(1-\frac{\sin(2|\xi|t)}{2|\xi|t}\rt)(1-\cos (h|\xi|))\\
&\qquad\qquad\qquad\qquad\qquad\cdot(1-\cos(|x-y|\xi))\cdot |\xi|^{-1-2H}d\xi\\
%&=t \int_{E(\xi)}(1-\cos (h|\xi|) )|\xi|^{-1-2H}d\xi-\frac12 \int_{E(\xi)}(1-\cos (h|\xi|) )|\xi|^{-2-2H}\sin(2|\xi|t)d\xi\\
&\gs ~ t\int_{c_0}^{\infty}(1-\cos (h|\xi|) )[1-\cos(|x-y|\xi)]\cdot |\xi|^{-1-2H}d\xi\\
&\approx t \cdot |x-y|^{2H}\int_{|x-y|c_0}^{\infty}\lt[1-\cos \lt(\frac{h\xi}{|x-y|} \rt)\rt][1-\cos(\xi)]\cdot |\xi|^{-1-2H}d\xi  \\
&\gs~ t h^2\cdot |x-y|^{2H-2}\int_{|x-y|c_0}^{\frac{|x-y|\pi}{2h}}[1-\cos(\xi)]\cdot |\xi|^{1-2H}d\xi\,,
\end{align*}
where in the last inequality we use the simple inequality $1-\cos(x)\geq x^2/4$ when $|x|\leq \pi/2$.   Set
  \[
   k_0=\inf \lt\{k\in\bN_0:\frac{(6k+1)\pi}{3}\geq |x-y|c_0\rt\}\,;
  \]
  \[
   k_1=\sup\lt\{k\in\bN_0:\frac{(6k+5)\pi}{3}\leq \frac{|x-y|\pi}{2h}\rt\}\,;
  \]
and % $I_k=(\frac{(2k+1)\pi}{2},\frac{(2k+3)\pi}{2}]$. 
$I_k=(\frac{(6k+1)\pi}{3},\frac{(6k+5)\pi}{3}]$. Then if $h$ is sufficiently small, we have
\begin{align}\label{est_devide}
\int_{|x-y|c_0}^{\frac{|x-y|\pi}{2h}} &[1-\cos(\xi)]\cdot |\xi|^{1-2H}d\xi\nonumber\\
&\geq \sum_{k=k_0}^{k_1}\int_{I_k}[1-\cos(\xi)]\cdot |\xi|^{1-2H}d\xi\geq \int_{\frac{(6k_0+1)\pi}{3}}^{\frac{(6k_1+5)\pi}{3}}|\xi|^{1-2H}d\xi\nonumber\\
&=c_H\lt[ \lt(\frac{(6k_1+5)\pi}{3}\rt)^{2-2H} -\lt(\frac{(6k_0+1)\pi}{3}\rt)^{2-2H}  \rt]\nonumber\\
&\geq c_H \lt( \frac{|x-y|}{h} \rt)^{2-2H}.
\end{align}
%\[d_{2, t,h}^2(x,y)\geq c_H t h^{2H}.\]
Thus,  when $h<(t\wedge 1)$,
\[d_{2, t,h}(x,y)\geq  c_H t^{\frac12}h^{H}=c_H t^{\frac12}(h^{H}\wedge   t^{H}).\]
On the interval $\bL=\LL$,
% for $L$ large enough}\footnote{if there is no condition on $|x-y|$, then we don't need this condition}, 
let us select $x_j=jL/t$ for $j=0,\pm 1,\cdots,\pm \lfloor L/t\rfloor$. Similar to the proof of the lower bound part in  Theorem \ref{LUEESup},
%in \cite[Theorem 1.1]{HW2021}, 
we apply the Sudakov minoration
  theorem  (Theorem \ref{Sudakov}) with $\delta=c_H t^{\frac12}|h|^{H}$, which
  yields
  \begin{equation}\label{lower1}
  	\EE\lk\sup_{x\in\bL}\Delta _h u(t,x)\rk\geq \EE\lk\sup_{x_i}\Delta _h u(t,x)\rk\geq c_H t^{\frac12}|h|^{H}\Phi_0(t,L)\,.
  \end{equation}

As a result, combining \eqref{upper1} and  \eqref{lower1}, we accomplish the proof of   \eqref{SupHolderEEasmp}. The proof of \eqref{SupasmpHolder} is similar to that of \eqref{Supasmp}  in Theorem \ref{LUEESup}, by applying Borell's inequality.   This completes the proof of the theorem.
\end{proof}

\bigskip

\begin{proof}[Proof of Theorem \ref{LUtHolderEESup}]
 The  canonical metric associated with the time increment of the solution is defined as
 \[
 d_{3, t, \tau} (x,y)=(\EE|\Delta_\tau u(t,x)-\Delta_\tau u (t,y)|^2)^{\frac 12}\,.
 \]
Recall that 
	\[
    \Delta_\tau u(t,x) =\int_{0}^{t+\tau}\int_{\RR} G_{t+\tau-s}(x-z)W(ds,dz)-\int_{0}^{t}\int_{\RR} G_{t-s}(x-z)W(ds,dz)\,.
	\]
 By the isometric property of the stochastic integral and applying Plancherel's identity with respect to $z$,  we obtain

\begin{align*}
d_{3, t, \tau}^2 (x,y)&=2\int_{\RR_+} [1-\cos(|x-y|\xi)]\cdot \xi^{-1-2H}\Big\{t[1-\cos (|\xi|\tau)]-\frac{\sin(|\xi|\tau)}{2|\xi|}\\
&\quad+\frac{\tau}{2}+\frac{1}{2|\xi|}\lt[\sin(\xi(t+\tau))-\sin(\xi t)\rt]\cdot\lt[\cos(\xi t)-\cos(\xi(t+\tau))\rt]\Big\}d\xi\\
&=2\int_{\RR_+} [1-\cos(|x-y|\xi)]\cdot \xi^{-1-2H}\cdot f_1(t,\tau,\xi)d\xi\\
&\quad+2\int_{\RR_+} [1-\cos(|x-y|\xi)]\cdot \xi^{-1-2H}\cdot f_2(t,\tau,\xi)d\xi\\
&\quad+2\int_{\RR_+} [1-\cos(|x-y|\xi)]\cdot \xi^{-1-2H}\cdot f_3(t,\tau,\xi)d\xi\\
&=:d_{3(1), t, \tau}^2 (x,y)+d_{3(2), t, \tau}^2 (x,y)+d_{3(3), t, \tau}^2 (x,y),
\end{align*}
where
\begin{equation}
\begin{cases}
&f_1(t,\tau,\xi):=t[1-\cos(\tau\xi)], \\
&f_2(t,\tau,\xi):=\frac{\tau}{2}-\frac{\sin(|\xi|\tau)}{2|\xi|}, \\
&f_3(t,\tau,\xi):=-\frac{1}{2|\xi|}\lt[\sin(\xi(t+\tau))-\sin(\xi t)\rt]\cdot\lt[\cos(\xi(t+\tau))-\cos(\xi t)\rt].
\end{cases}
\end{equation}
%It holds $f(t,\tau,\xi)=f_1(t,\tau,\xi)+f_2(t,\tau,\xi)+f_3(t,\tau,\xi)$.

To obtain the upper bound in \eqref{SuptHolderEEasmp}, we first need to estimate the upper bound of $d_{3, t, \tau}^2 (x,y)$. We begin by considering $d_{3(1), t, \tau}^2 (x,y)$. Using the  elementary inequality $1-\cos(x)\leq C_{\theta}x^{2\theta}$ for $\theta\in(0,H)$, we have
\begin{align*}
d_{3(1), t, \tau}^2 (x,y)
%:&=\int_{\RR_+} [1-\cos(|x-y|\xi)]\xi^{-1-2H}f_1(t,\tau,\xi)d\xi\\
&\leq C_{\theta}\cdot t\cdot\tau^{2\theta}\int_{\RR_+} [1-\cos(|x-y|\xi)]\cdot \xi^{-1-2H+2\theta}d\xi\\
&= C_{H,\theta}\cdot t\cdot\tau^{2\theta}|x-y|^{2H-2\theta},
\end{align*}
where in the last equality we change the variable $\xi|x-y|\to\xi$. On the other hand,
\begin{align*}
d_{3(1), t, \tau}^2 (x,y) &\leq C_{H}\cdot t\cdot\int_{\RR_+} [1-\cos(\tau\xi)]\xi^{-1-2H}d\xi\\
&= C_{H}\cdot t\cdot\tau^{2H}.
\end{align*}
Thus, we have
\begin{align}\label{est_d31}
d_{3(1), t, \tau}^2 (x,y)\leq C_{H,\theta}\cdot t\cdot\tau^{2\theta}\lt(|x-y|\wedge \tau\rt)^{2H-2\theta}.
\end{align}
Now we begin to handle $d_{3(2), t, \tau}^2 (x,y)$. Similar to finding the upper bound  of $\cJ_2(t,x,y)$ defined by \eqref{def_J2}, we have
\begin{align}\label{est_d32}
d_{3(2), t, \tau}^2 (x,y)
%:&=\int_{\RR_+} [1-\cos(|x-y|\xi)]\xi^{-1-2H}f_2(t,\tau,\xi)d\xi\\
&=\int_{\RR_+} [1-\cos(|x-y|\xi)]\xi^{-1-2H}\lt(\frac{\tau}{2}-\frac{\sin(|\xi|\tau)}{2|\xi|}\rt)d\xi\nonumber\\
&\leq C_H\tau\cdot\lt(|x-y|^{2H}\wedge \tau^{2H}\rt).
\end{align}
As for the term $d_{3(3), t, \tau}^2 (x,y)$, by the elementary inequality
%$\frac{\sin(|\xi|\tau)}{|\xi|}\leq \tau$ and
 $|\sin(\xi(t+\tau))-\sin(t\xi)|\leq \xi\tau$, it is not hard to see that $|f_3(t,\tau,\xi)|\leq \tau$. Then
\begin{align}\label{est_d33}
d_{3(3), t, \tau}^2 (x,y)
%:&=\int_{\RR_+} [1-\cos(|x-y|\xi)]\xi^{-1-2H}f_3(t,\tau,\xi)d\xi\\
&\leq C_{H}\tau\int_{\RR_+} [1-\cos(|x-y|\xi)]\cdot \xi^{-1-2H}d\xi\nonumber\\
&= C_{H}\tau |x-y|^{2H}.
\end{align}
On the other hand,  since $|\sin(\xi(t+\tau))-\sin(t\xi)|\leq (\xi\tau)\wedge 1$ and $|\cos(\xi(t+\tau))-\cos(t\xi)|\leq (\xi\tau)\wedge 1$, we have 
\begin{align}\label{1est_d33}
d_{3(3), t, \tau}^2 (x,y)
&\leq C_{H}\tau\int_{\RR_+}\xi^{-2-2H}\cdot [(\xi\tau)^2\wedge 1]d\xi\nonumber\\
&= C_{H}\int_0^{1/\tau}\xi^{-2-2H}\cdot (\xi\tau)^2d\xi+C_{H}\int_{1/\tau}^{\infty}\xi^{-2-2H}d\xi\nonumber\\
&\leq C_{H}\tau^{2H+1}.
\end{align}
Thus, 
\begin{align}\label{Est_d33}
d_{3(3), t, \tau}^2 (x,y)
\leq C_{H}\tau(|x-y|\wedge\tau)^{2H}.
\end{align}
Therefore, from \eqref{est_d31}, \eqref{est_d32} and \eqref{Est_d33},  it follows that
\begin{align}\label{d3_upper}
d_{3,t,\tau}^2(x,y)&\leq C_{H,\theta}t\cdot\tau^{2\theta}\lt(|x-y|\wedge \tau\rt)^{2H-2\theta}+C_{H}\tau (|x-y|\wedge\tau)^{2H}.
%&\leq C_{H,\theta}t\cdot\tau^{2\theta}\lt(|x-y|\wedge \tau\rt)^{2H-2\theta}+C_{H}\tau \tcr{(|x-y|\wedge\tau)^{2H}}.
%%&=C_{H,\theta}t\cdot\tau^{2\theta}|x-y|^{2H-2\theta},
%%&{\red =C_{H,\theta}t\cdot\tau^{2\theta}\lt(|x-y|\wedge t\rt)^{2H-2\theta} },
\end{align}
%\tcr{if $|x-y|\leq \tau\leq c( {t}\wedge1)$.}

Now we use Talagrand's majorizing measure theorem (Theorem \ref{Talagrand}) to obtain the upper bound of $\EE \lk\sup_{x\in\bL} \Delta _{\tau} u(t,x)\rk$.  We take 
\[
  A_n(x)=\lt[k\cdot 2^{-2^{n-2}}L,(k+1)\cdot 2^{-2^{n-2}}L\rt)
 \]
and partition $[-L,L]$ as 
\[[-L,L]=\bigcup\limits_{k=-2^{2^{n-2}}}^{2^{2^{n-2}}-1}A_n(x)=\bigcup\limits_{k=-2^{2^{n-2}}}^{2^{2^{n-2}}-1}\lt[k\cdot2^{-2^{n-2}}L,(k+1)\cdot2^{-2^{n-2}}L\rt).\]
The  diameter of $A_n(x)$ with respect to $d_{3,t,\tau}(x,y)$ can be estimated as
\[\diam(A_n(x))\leq C_{H,\theta}t^{1/2}\cdot\tau^{\theta}\lt[\lt(2^{-2^{n-2}}L\rt)\wedge\tau\rt]^{H-\theta}+C_H\tau^{1/2}\cdot[(2^{-2^{n-2}}L)\wedge\tau]^H.\]
Let $N_0=\inf\{n,2^{-2^{n-2}}L\leq \tau\}$. This is, $\log_2(\log_2(L/\tau))+2\leq N_0< \log_2(\log_2(L/\tau))+3$. 
%This gives the upper bound of $d_{3,t,\tau}^2(x,y)$.  In the following, 
Then, by invoking Theorem \ref{Talagrand} in a similar manner to the proof of Theorem \ref{LUEESup}, we obtain when $L\geq \tau$ and $\tau\leq c( {t}\wedge1)$,
\begin{align*}
\EE &\lk\sup_{x\in\bL} \Delta _{\tau} u(t,x)\rk \nonumber\\
  { \leq}& C_H \sup_{x\in\bL} \lk \sum_{n=0}^{N_0}2^{n/2} \diam(A_n(t,x))+ \sum_{n= N_0+1}^{\infty}2^{n/2} \diam(A_n(t,x))\rk \nonumber\\
%  \leq &C_{H,\theta}t^{1/2}\cdot\tau^{\theta}\sum_{n=0}^{N_0}2^{n/2}+C_{H,\theta}t^{1/2}\cdot\tau^{\theta}\sum_{n= N_0+1}^{\infty}(2^{-2^{n-2}}L)^{H-\theta}2^{n/2}\\
%  &+C_H\tau^{1/2+H}\sum_{n=0}^{N_0}2^{n/2}+C_H\tau^{1/2}\sum_{n= N_0+1}^{\infty}(2^{-2^{n-2}}L)^{H}2^{n/2}\\
  \leq &C_{H,\theta} t^{1/2}\cdot\tau^{\theta}\lk \sum_{n= 0}^{N_0}2^{n/2}+\sum_{n= N_0+1}^{\infty}2^{n/2} \lc\frac{2^{2^{N_0-2}}}{2^{2^{n-2}}}\rc^{H-\theta} \rk\\
  &+C_H\tau^{1/2+H}\lk \sum_{n= 0}^{N_0}2^{n/2}+\sum_{n= N_0+1}^{\infty}2^{n/2} \lc\frac{2^{2^{N_0-2}}}{2^{2^{n-2}}}\rc^{H} \rk\\
  \leq &C_{H,\theta}t^{1/2}\cdot\tau^{\theta}2^{N_0/2}+C_H\tau^{1/2+H}2^{N_0/2} \\
  \leq &C_{H,\theta}t^{1/2}\tau^{\theta} \Phi_0(\tau,L).
\end{align*}

%\[
%  \EE \lk\sup_{x\in\bL} \Delta _{\tau} u(t,x)\rk\leq C_{ H,\theta} |\tau|^{\theta} t^{\frac12} \tcr{\Phi_0(\tau,L)} \,,
%  \]
% \tcr{ if $L> T$}.

%Thus, we have \[d_{4,t,\tau}(x,y)\leq C_{H,\theta}t^{\frac12}\tau^{\theta}\cdot\lt(\tau^{H-\theta}\wedge |x-y|^{H-\theta}\rt).\]

Next, we establish the lower bound in \eqref{SuptHolderEEasmp}. To this end, we first need to estimate the upper bound of $d_{3,t,\tau}^2(x,y)$.
%Set $G(\xi):=\lt\{\xi: 0<1-\cos(|x-y|\xi)<\frac12\rt\}$.
%{\blue If $|x-y|\gs t\gs \tau$, consider the same $E(\xi)$ defined by \eqref{E(xi)} with $s$ replaced by $\tau$.} Then same as in the inequalities \eqref{est_I_1} and \eqref{est_intxi},
%{\red \begin{align*}
%d_{4,t,\tau}^2(x,y)&\geq\int_{E(\xi)} [1-\cos(|x-y|\xi)]f_1(t,\tau,\xi)d\xi\\
%&\geq C_H t\cdot\int_{E({\xi})} [1-\cos(\tau\xi)]\xi^{-1-2H} d\xi\\
%&\geq C_H |x-y|^{2H} t\cdot\int_{E(\hat{\xi})} |\hat{\xi}|^{-1-2H} d\hat{\xi}\\
%&= C_{H}t\cdot\tau^{2H}|x-y|^{2H}.
%\end{align*}}
If $|x-y|\geq \tau$ and $\tau \leq c( {t}\wedge1)$, 
then by the inequality  $1-\cos(x)\geq x^2/4$ when $|x|\leq \pi/2$, we have
\begin{align*}
d_{3(1),t,\tau}^2(x,y)&=2\int_{\RR_+} [1-\cos(|x-y|\xi)]\cdot \xi^{-1-2H}\cdot t[1-\cos(\tau\xi)]d\xi\\
&=t|x-y|^{2H}\int_{\RR_+}[1-\cos(\xi)]\cdot \xi^{-1-2H}\cdot \lt[1-\cos\lt(\frac{\tau\xi}{|x-y|}\rt)\rt]d\xi\\
&\geq t \tau^2 |x-y|^{2H-2}\int_{c_0}^{\frac{\pi |x-y|}{2\tau}}[1-\cos(\xi)]\cdot \xi^{1-2H}d\xi,
%&=t \tau^{2H}\int_{\RR_+} \lt[1-\cos\lt(\frac{|x-y|\xi}{\tau}\rt)\rt]\cdot \xi^{-1-2H}\cdot [1-\cos(\xi)]d\xi\\
%&\geq\frac12\int_{E(\xi)} f_1(t,\tau,\xi)d\xi\\
%&=\frac12 t\cdot\int_{E({\xi})} [1-\cos(\tau\xi)]\xi^{-1-2H} d\xi\\
%%&\geq C_H |x-y|^{2H} t\cdot\int_{E(\hat{\xi})} |\hat{\xi}|^{-1-2H} d\hat{\xi}\\
%&\geq C_{H}t\cdot\tau^{2H}.
\end{align*}
where $c_0$ is a fixed positive constant. Using the same estimate as in \eqref{est_devide},  we obtain  
\[\int_{c_0}^{\frac{\pi |x-y|}{2\tau}}[1-\cos(\xi)]\cdot \xi^{1-2H}d\xi\geq c_H  \lt( \frac{|x-y|}{\tau} \rt)^{2-2H}.
 \]
 Thus, we have
 \begin{equation}\label{low_est_d31}
 d_{3(1),t,\tau}^2(x,y)\geq c_H t \tau^{2H}.
 \end{equation}
 For the term $d_{3(2), t, \tau}^2 (x,y)$, we know
 \begin{align*}
d_{3(2), t, \tau}^2 (x,y)
&=\frac{\tau}{2}\int_{\RR_+} [1-\cos(|x-y|\xi)]\xi^{-1-2H}\lt(1-\frac{\sin(|\xi|\tau)}{|\xi|\tau}\rt)d\xi.
\end{align*}
Similar to the lower bound of $d_{2, t,h}^2(x,y)$ in the proof of Theorem \ref{LUHolderEESup}, we can obtain
\begin{equation}\label{low_est_d32}
 d_{3(2),t,\tau}^2(x,y)\geq c_H t \tau^{2H}.
 \end{equation}
For the term $d_{3(3), t, \tau}^2 (x,y)$, by the differential mean value theorem we know
\[|f_3(t,\tau,\xi)|\leq \frac{(\tau \xi\wedge 1)^{\alpha}}{|\xi|}.\] 
Thus, 
\begin{align}\label{upper_est_d33}
d_{3(3), t, \tau}^2 (x,y)&\leq \int_{\RR_+} [1-\cos(|x-y|\xi)]\cdot \xi^{-2-2H}\cdot (\tau \xi\wedge 1)^{\alpha}d\xi\nonumber\\
&=\tau^{1+2H}\int_{\RR_+} \lt[1-\cos\lt(\frac{|x-y|\xi}{\tau}\rt)\rt]\cdot \xi^{-2-2H}\cdot (\xi\wedge 1)^{\alpha}d\xi\nonumber\\
&=\tau^{1+2H}\int_0^1\lt[1-\cos\lt(\frac{|x-y|\xi}{\tau}\rt)\rt]\cdot \xi^{-2-2H+\alpha}d\xi\nonumber\\
&\qquad+\tau^{1+2H}\int_1^{\infty}\lt[1-\cos\lt(\frac{|x-y|\xi}{\tau}\rt)\rt]\cdot \xi^{-2-2H}d\xi\nonumber\\
&\leq C_{H,\alpha}\tau^{1+2H}+C_H\tau^{1+2H}.
\end{align}
%\tcr{By the inequality $\sin(|\xi|\tau)\leq |\xi|\tau$, we have}
%\[|f_2(t,\tau,\xi)+f_3(t,\tau,\xi)|\leq \tau.\] Then 
Since $f_1(t,\tau,\xi)+f_2(t,\tau,\xi)+f_3(t,\tau,\xi)\geq f_1(t,\tau,\xi)+f_2(t,\tau,\xi)-|f_3(t,\tau,\xi)|$, combining \eqref{low_est_d31}, \eqref{low_est_d32} and \eqref{upper_est_d33}, we have
\begin{align*}
d_{3,t,\tau}^2(x,y)&\geq c_H t \tau^{2H}-C_H\tau^{1+2H} \\
&\geq c'_{H} t\tau^{2H}
\end{align*}
if $\tau$ is small enough. Hence, if $|x-y|\geq \tau$, it holds that
\begin{equation}\label{d3_lowerupper}
d_{3,t,\tau}(x,y)\geq c'_H t^{1/2}\tau^{H} .
\end{equation}
This gives the lower bound of $d_{3,t,\tau}(x,y)$.

Now, we apply the Sudakov minoration theorem to obtain the lower bound in \eqref{SuptHolderEEasmp}. On the interval $\bL=\LL$, for sufficient large $L$, we select $x_j=jL/\tau$ for $j=0,\pm 1,\cdots,\pm \lfloor L/\tau\rfloor$. Similar to the proof of the lower bound in the first part of Theorem \ref{LUEESup}, applying the Sudakov minoration
  theorem  (Theorem \ref{Sudakov}) with $\delta=c_Ht^{1/2} \tau^{H}$
  yields
  \begin{equation*}
  	\EE\lk\sup_{x\in\bL}\Delta _{\tau} u(t,x)\rk\geq \EE\lk\sup_{x_j}\Delta _{\tau} u(t,x)\rk\geq c_H t^{1/2}\tau^{H}\Phi_0(\tau,L)\,.
  \end{equation*}
  This completes the proof of  \eqref{SuptHolderEEasmp} in this theorem. 

Analogous to the second part of Theorem \ref{LUEESup}, \eqref{SuptasmpHolder} can also be established. This completes the proof.
\end{proof}

\appendix
\section{Lemmas used in proofs}\label{appenA}
\begin{Lem}\label{equiv}
  If the process $\{X_{t},t\in T\}$ is symmetric,   then we have
  \begin{equation}
    \EE\big[\sup_{t\in T} |X_{t}|\big]\leqslant 2\EE\big[\sup_{t\in T} X_{t} \big]+\inf_{t_0\in T}\EE\big[|X_{t_0}|\big]\,.
    %\leqslant 3\EE\big[\sup_{t\in T} |X_{t}|\big].
  \end{equation}
\end{Lem}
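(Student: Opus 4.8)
The plan is to deduce the two-sided bound from a one-sided one by a centering trick and then invoke the symmetry hypothesis. First I would reduce to the case in which the right-hand side is finite; then every $X_t$ is integrable, since $-X_t\le\sup_{s\in T}(-X_s)$ and $\sup_{s\in T}(-X_s)$ has the same law as $\sup_{s\in T}X_s$ by symmetry, while $X_t\le\sup_{s\in T}X_s$; moreover $\EE[X_t]=0$ for every $t$ by symmetry.

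Fix an arbitrary $t_0\in T$. Step one is the pointwise triangle inequality $|X_t|\le|X_t-X_{t_0}|+|X_{t_0}|$, which after taking the supremum over $t\in T$ gives $\sup_{t\in T}|X_t|\le\sup_{t\in T}|X_t-X_{t_0}|+|X_{t_0}|$. Step two: write $|X_t-X_{t_0}|=(X_t-X_{t_0})\vee(X_{t_0}-X_t)$, note that the supremum of a maximum is the maximum of the suprema, and bound the maximum of two quantities by their sum; this is legitimate because each of $\sup_{t\in T}(X_t-X_{t_0})$ and $\sup_{t\in T}(X_{t_0}-X_t)$ is nonnegative (evaluate at $t=t_0$). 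Step three: take expectations, using $\sup_{t\in T}(X_t-X_{t_0})=\sup_{t\in T}X_t-X_{t_0}$ and $\sup_{t\in T}(X_{t_0}-X_t)=X_{t_0}+\sup_{t\in T}(-X_t)$; the contributions of $X_{t_0}$ cancel (indeed $\EE[X_{t_0}]=0$), leaving $\EE[\sup_{t\in T}|X_t-X_{t_0}|]\le\EE[\sup_{t\in T}X_t]+\EE[\sup_{t\in T}(-X_t)]$. Step four: by symmetry $(X_t)_{t\in T}\overset{d}{=}(-X_t)_{t\in T}$, hence $\EE[\sup_{t\in T}(-X_t)]=\EE[\sup_{t\in T}X_t]$, which produces the factor $2$. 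Combining with step one and taking the infimum over $t_0\in T$ yields the claim.

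I do not anticipate a genuine difficulty here. The only points needing care are the standard measurability and finiteness conventions for $\sup_{t\in T}$ over a possibly uncountable index set $T$ (handled by passing to a countable determining subset, or trivially when the right-hand side is infinite) and keeping the bookkeeping of the shifts by the constant $X_{t_0}$ consistent so that its contributions indeed cancel.
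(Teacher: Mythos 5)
Your proof is correct. The paper states this lemma in the appendix without proof (it is a standard fact for symmetric processes), and your argument — centering at an arbitrary $t_0$, bounding $\sup_{t\in T}|X_t-X_{t_0}|$ pointwise by $\sup_{t\in T}(X_t-X_{t_0})+\sup_{t\in T}(X_{t_0}-X_t)=\sup_{t\in T}X_t+\sup_{t\in T}(-X_t)$, and using symmetry to identify $\EE\big[\sup_{t\in T}(-X_t)\big]$ with $\EE\big[\sup_{t\in T}X_t\big]$ — is precisely the standard derivation; note the cancellation of $X_{t_0}$ is already algebraic/pointwise, so your parenthetical appeal to $\EE[X_{t_0}]=0$ is not even needed.
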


\begin{Thm} {\rm (Talagrand's majorizing measure theorem,  see  e.g.
   \cite[Theorem 2.4.2]{Talagrand2014}).} \label{Talagrand}
 For fixed $T>0$, let $\{X_t,t\in T\}$ be a centered Gaussian process indexed by $T$.  Denote
 	 by $d(t,s):=(\EE|X_t-X_s|^2)^{\frac 12}$  
 	     the associated  %\tcb
	     {canonical metric} of $X_t$ on $T$.
 	 %  Given $\alpha>0$, %and a metric space $(T,d)$
% 	 we define
% \begin{eqnarray}
%    \gamma_\alpha(T,d)
%    &:=&\inf \sup_{t}\sum_{n\geq 0}2^{n/\alpha} \diam(A_n(t))\,,
%\label{Gamma_alpha}
%%\\
%%    & =& \inf \sup_{t\in T}\sum_{n\geq 0} 2^{n/2} d(t,\TT_n),
%%     \nonumber
%  \end{eqnarray}
%  and in the second equation the infimum is taken over all sequences of sets $\TT_n$ with cardinality $|\TT_n|<2^{2^n}$.
%
  Then
 	\begin{equation}
 	\EE\Blk \sup_{t\in T} X_t \Brk\approx \gamma_2(T,d):=\inf_\cA \sup_{t\in T} \sum_{n\geq 0}2^{n/2} \diam(A_n(t))\,,
 	\end{equation}
 	where   the infimum is taken over all increasing sequence $\cA:=\{\cA_n, n=1, 2, \cdots\}$ of partitions of $T$ such that $\#\cA_n  \leq 2^{2^n}$ ($\#A$ denotes the number of elements in the  set $A$),  where $A_n(t)$ denotes the unique element of $\cA_n$ that contains $t$, and $\diam(A_n(t))$ is the diameter of $A_n(t)$.
%    where the infimum is taken over all admissible sequences.
 \end{Thm}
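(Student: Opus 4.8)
The plan is to recognize Theorem~\ref{Talagrand} as Talagrand's majorizing measure theorem and to reconstruct its proof in the modern generic‑chaining form, establishing the two inequalities $\EE[\sup_{t\in T}X_t]\ls\gamma_2(T,d)$ and $\EE[\sup_{t\in T}X_t]\gs\gamma_2(T,d)$ separately. By a standard approximation one may assume the index set $T$ is finite; one may also normalize so that $\cA_0=\{T\}$, fix a base point $t_0\in T$, and for each level $n$ and each cell $A\in\cA_n$ select a representative point, producing maps $\pi_n\colon T\to T$ with $\pi_n(t)\in A_n(t)$ and $\pi_0\equiv t_0$.

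For the upper bound I would use the chaining decomposition
\[
X_t-X_{t_0}=\sum_{n\ge1}\big(X_{\pi_n(t)}-X_{\pi_{n-1}(t)}\big),\qquad t\in T.
\]
At level $n$ the increment $X_{\pi_n(t)}-X_{\pi_{n-1}(t)}$ takes at most $\#\cA_n\cdot\#\cA_{n-1}\le 2^{2^{n+1}}$ distinct values over $t\in T$, each centered Gaussian with variance $d(\pi_n(t),\pi_{n-1}(t))^2\le\big(2\diam(A_{n-1}(t))\big)^2$. Combining the Gaussian tail bound $\PP(|g|>s)\le 2e^{-s^2/2}$ with a union bound over all $n$ and all these values shows that, outside an event of probability $\le Ce^{-cu^2}$, one has $|X_{\pi_n(t)}-X_{\pi_{n-1}(t)}|\le Cu\,2^{n/2}\diam(A_{n-1}(t))$ simultaneously for every $t$ and every $n\ge1$. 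Summing over $n$ yields $\sup_{t\in T}(X_t-X_{t_0})\le Cu\,\sup_{t\in T}\sum_{n\ge0}2^{n/2}\diam(A_n(t))$ on that event; integrating in $u$ and using that $X_{t_0}$ is centered gives $\EE[\sup_{t\in T}X_t]=\EE[\sup_{t\in T}(X_t-X_{t_0})]\ls\sup_{t\in T}\sum_{n\ge0}2^{n/2}\diam(A_n(t))$. Since the admissible sequence $\cA$ was arbitrary, taking the infimum over $\cA$ gives $\EE[\sup_{t\in T}X_t]\ls\gamma_2(T,d)$.

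The lower bound is the substantial half, and here I would follow Talagrand's partitioning scheme. One works with a set functional $A\mapsto\EE[\sup_{t\in A}X_t]$ (suitably truncated at scale $2^{-n/2}$ relative to the diameters at level $n$) and builds the admissible sequence greedily: each cell $A\in\cA_n$ is either kept essentially intact, when its diameter is already comparable to $2^{-n/2}$ times its functional value, or it is split into at most $\#\cA_{n+1}\le 2^{2^n}$ sub‑cells. The engine is a growth/separation estimate: if a cell contains $m$ sub‑regions that are $a$‑separated, each carrying functional value at least $b$, then the functional on the whole cell is at least $b+c\,a\sqrt{\log m}$; this is precisely where the Sudakov minoration theorem (Theorem~\ref{Sudakov}) and the Gaussian concentration (Borell--Tsirelson--Ibragimov--Sudakov) inequality enter. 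Iterating, along any branch of the resulting partition tree at most one child can retain the full functional value at each level while every other child loses a definite amount of order $2^{-n/2}\diam$; since the functional is bounded above by $\EE[\sup_{t\in T}X_t]$, the total charge $\sup_{t\in T}\sum_{n\ge0}2^{n/2}\diam(A_n(t))$ accumulated along a chain is $\ls\EE[\sup_{t\in T}X_t]$, which is exactly $\gamma_2(T,d)\ls\EE[\sup_{t\in T}X_t]$. The doubly exponential cardinality budget $\#\cA_n\le 2^{2^n}$ is what makes $\sqrt{\log\#\cA_{n+1}}\approx 2^{n/2}$ affordable at each stage.

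I expect the main obstacle to be this lower bound: organizing the recursion so that the number of cells never exceeds $2^{2^n}$ while the diameters decay geometrically and the functional decrement is charged correctly is the technical core of the majorizing measure theorem. As this is a classical result, in the paper itself we simply invoke \cite[Theorem~2.4.2]{Talagrand2014}; the sketch above records the structure of the argument and makes explicit the roles of generic chaining, Sudakov minoration, and Gaussian concentration.
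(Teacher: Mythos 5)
The paper contains no proof of this statement: Theorem \ref{Talagrand} is quoted as a classical result and simply referred to \cite[Theorem 2.4.2]{Talagrand2014}, so there is no internal argument to compare yours against; your outline is a faithful reconstruction of the standard generic-chaining proof. The upper-bound half as you present it is essentially complete and correct: since the partitions are nested you in fact have $d(\pi_n(t),\pi_{n-1}(t))\leq \diam(A_{n-1}(t))$ (your factor $2$ is harmless), the number of distinct increments at level $n$ is at most $\#\cA_n\cdot\#\cA_{n-1}\leq 2^{2^{n+1}}$, the union bound over levels converges for $u$ larger than a universal constant, integration in $u$ gives the expectation bound, and the index shift from $\diam(A_{n-1}(t))$ to $\diam(A_n(t))$ only costs $\sqrt{2}$; using $\EE X_{t_0}=0$ to pass from $\sup_t(X_t-X_{t_0})$ to $\sup_t X_t$ is also fine. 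The lower-bound half, however, is a description rather than a proof: the growth/separation lemma (Sudakov minoration plus Gaussian concentration applied to well-separated sub-cells), the choice of the truncated functionals, and the verification that the greedy partitioning keeps $\#\cA_n\leq 2^{2^n}$ while the accumulated charge $\sup_t\sum_n 2^{n/2}\diam(A_n(t))$ is controlled by $C\,\EE[\sup_t X_t]$ constitute the technical core of the majorizing measure theorem, and you name these ingredients without carrying out the recursion or the bookkeeping. Since the theorem enters this paper only as a cited tool, invoking \cite{Talagrand2014} is the appropriate course (as you note); just be aware that, as a standalone argument, your text establishes only the inequality $\EE[\sup_{t\in T}X_t]\lesssim\gamma_2(T,d)$, not the converse.
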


 \begin{Thm}{\rm (Sudakov minoration  theorem,  see   e.g.  \cite[Lemma 2.4.2]{Talagrand2014}).} \label{Sudakov}
		Let $\{X_{t_i},i=1,\cdots,L\}$ be a centered Gaussian family with  %\tcb
		{canonical metric $d(t,s):=(\EE|X_t-X_s|^2)^{\frac 12}$. Suppose there exists a finite subset $\{t_1,t_2,\cdots, t_L\}\subset T$ such that for all $p\neq q$, 
		\[
		 d(t_p,t_q)\geq \delta.
		\]}
	Then, we have
		\begin{equation}
		\EE\Blc\sup_{1\leq i\leq L} X_{t_i} \Brc \geq \frac{\delta}{C} \sqrt{\log_2(L)},
		\end{equation}
		where $C$ is a universal constant.
	\end{Thm}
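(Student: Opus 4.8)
The plan is to derive the Sudakov minoration from the Sudakov--Fernique Gaussian comparison inequality, combined with an explicit lower bound for the expected maximum of independent standard Gaussians. First I would recall the comparison principle in the form we need: if $(X_i)_{i\le L}$ and $(Y_i)_{i\le L}$ are centered Gaussian families (realized, say, independently on a common space) with $\EE|X_p-X_q|^2\ge \EE|Y_p-Y_q|^2$ for every pair $p,q$, then $\EE[\max_i X_i]\ge \EE[\max_i Y_i]$. If one wishes to keep the argument self-contained rather than cite it, this is proved by the Gaussian interpolation $Z_i(u)=\sqrt{u}\,X_i+\sqrt{1-u}\,Y_i$: one applies it to the smooth surrogate $f_\beta(z)=\beta^{-1}\log\sum_i e^{\beta z_i}$, differentiates $u\mapsto\EE[f_\beta(Z(u))]$, integrates by parts in the Gaussian variables, checks monotonicity under the increment hypothesis, and finally lets $\beta\to\infty$.

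Second, I would construct the comparison family. Let $g_1,\dots,g_L$ be i.i.d.\ standard normal and set $Y_i:=\tfrac{\delta}{\sqrt2}\,g_i$. Then for $p\neq q$ one has $\EE|Y_p-Y_q|^2=\tfrac{\delta^2}{2}(\EE g_p^2+\EE g_q^2)=\delta^2$, and for $p=q$ both increments vanish. Since the hypothesis of the theorem reads $d(t_p,t_q)^2=\EE|X_{t_p}-X_{t_q}|^2\ge\delta^2$ for $p\neq q$, the comparison condition $\EE|X_{t_p}-X_{t_q}|^2\ge\EE|Y_p-Y_q|^2$ holds for all $p,q$. Applying Sudakov--Fernique in the correct direction (larger $L^2$-increments force a larger supremum) then yields
\[
\EE\Blk\sup_{1\le i\le L}X_{t_i}\Brk\ \ge\ \EE\Blk\max_{1\le i\le L}Y_i\Brk\ =\ \frac{\delta}{\sqrt2}\,\EE\Blk\max_{1\le i\le L}g_i\Brk.
\]

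Third, I would supply the elementary estimate $\EE[\max_{i\le L}g_i]\ge c\sqrt{\log L}$ for a universal $c>0$. Writing $M=\max_i g_i$ and $\EE M=\int_0^\infty\PP(M>t)\,dt-\int_0^\infty\PP(M<-t)\,dt$, the negative part is controlled using $\{M<-t\}\subseteq\{g_1<-t\}$, whence $\int_0^\infty\PP(M<-t)\,dt\le\EE[g_1^-]=(2\pi)^{-1/2}$. For the positive part I would fix $a=\sqrt{\log L}$ and use the independence bound $\PP(M>a)\ge 1-e^{-L\,\PP(g_1>a)}$ together with the Gaussian tail estimate $\PP(g_1>a)\gtrsim a^{-1}e^{-a^2/2}$, so that $L\,\PP(g_1>a)\to\infty$ and hence $\PP(M>a)\to1$. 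Since $\int_0^\infty\PP(M>t)\,dt\ge a\,\PP(M>a)$, this gives $\EE M\ge \tfrac12\sqrt{\log L}$ for all large $L$, and the finitely many remaining values of $L$ are absorbed into the universal constant. Inserting this into the display and using $\log L=(\log 2)\log_2 L$ produces $\EE[\sup_i X_{t_i}]\ge \delta\,C^{-1}\sqrt{\log_2 L}$, as claimed.

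The main obstacle is the comparison step itself: everything reduces to producing the monotone independent surrogate $Y_i$ and invoking Sudakov--Fernique, so the delicate point is either verifying the interpolation and Gaussian integration-by-parts identity (if one declines to cite it) or, at minimum, matching the increment condition so that $Y_i$ is genuinely dominated by $X_{t_i}$ in the $L^2$-increment sense. The tail computation for i.i.d.\ Gaussians is routine, and the only real bookkeeping is tracking the universal constant through the passage from the natural logarithm to $\log_2$.
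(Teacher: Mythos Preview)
The paper does not prove this statement: it is listed in the appendix as a known tool and simply cited from Talagrand's book, with no argument supplied. So there is no ``paper's proof'' to compare against.

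Your proposal is correct and is in fact the standard derivation of Sudakov minoration: reduce via the Sudakov--Fernique comparison to an i.i.d.\ Gaussian family $Y_i=\tfrac{\delta}{\sqrt2}g_i$ whose pairwise $L^2$-increments equal $\delta^2$, then lower-bound $\EE[\max_{i\le L}g_i]$ by $c\sqrt{\log L}$ using the Gaussian tail and independence. The bookkeeping (direction of the comparison inequality, the $(1-x)^L\le e^{-Lx}$ step, handling the negative part of $\EE M$, and the passage from $\log$ to $\log_2$) is all fine. If you want the write-up fully self-contained you would need to carry out the interpolation/integration-by-parts argument for Sudakov--Fernique that you sketch; otherwise citing it is exactly what the paper itself does for the minoration theorem.
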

\begin{Thm} {\rm (Borell's inequality, see   e.g.   \cite[Theorem 2.1]{adler}).}
\label{Borell} { Let $\{X_t,t\in T\}$ be a centered separable   Gaussian process on some topological index set $T$
 with almost surely bounded sample paths.}   Then    \[\EE\Blc\sup_{t\in  T} X_t\Brc<\infty ,\]
 and for all $\lambda>0$
	\begin{equation}
	\bP\lt\{\lt|\sup_{t\in T} X_t-\EE\Blc\sup_{t\in T} X_t\Brc\rt|>\lambda\rt\}\leq 2\exp\lc-\frac {\lambda^2}{2\sigma_T^2}\rc,
	\end{equation}
	where $\sigma_T^2:=\sup_{t\in T}\EE(X_t^2)$.
 \end{Thm}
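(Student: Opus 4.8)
The plan is to prove the concentration estimate first when the index set is finite --- where $\sup_{t}X_t$ becomes a Lipschitz function of a finite-dimensional standard Gaussian vector --- and then to transfer it to the general separable case by a monotone approximation. First I would use separability to pick a countable dense set $T_0=\{t_1,t_2,\dots\}\subseteq T$ along which $\sup_{t\in T}X_t=\sup_{t\in T_0}X_t$ almost surely, and set $M_n:=\max_{1\le i\le n}X_{t_i}$, so that $M_n\uparrow M:=\sup_{t\in T_0}X_t$ almost surely. Fixing $n$, let $\Sigma_n$ be the covariance matrix of $(X_{t_1},\dots,X_{t_n})$ and put $A_n:=\Sigma_n^{1/2}$, so that $(X_{t_1},\dots,X_{t_n})$ has the same law as $A_nG$ for a standard Gaussian vector $G$ on $\RR^n$. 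Then $M_n$ has the same law as $F_n(G)$, where $F_n(x):=\max_{1\le i\le n}\langle a_i,x\rangle$ and $a_i$ is the $i$-th row of $A_n$, with $\|a_i\|^2=(\Sigma_n)_{ii}=\EE[X_{t_i}^2]\le\sigma_T^2$. Being a maximum of linear functionals whose gradients have Euclidean norm at most $\sigma_T$, the function $F_n$ is $\sigma_T$-Lipschitz on $\RR^n$.

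The key ingredient is the Gaussian concentration inequality for Lipschitz functions: if $G\sim N(0,I_n)$ and $F\colon\RR^n\to\RR$ is $\kappa$-Lipschitz, then $\PP(|F(G)-\EE[F(G)]|>\lambda)\le 2\exp(-\lambda^2/(2\kappa^2))$ for all $\lambda>0$. I would prove this by one of the two classical routes. The first uses the Gaussian isoperimetric inequality (half-spaces minimize the Gaussian measure of $\varepsilon$-enlargements): applying it to a sublevel set of $F$ at its median, together with the elementary tail bound for the standard normal law, gives concentration of $F(G)$ around its median, and since the median--mean gap is bounded by $\int_0^\infty\PP(|F(G)-\mathrm{med}(F(G))|>\lambda)\,d\lambda<\infty$, one upgrades this to concentration around the mean. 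The second uses the Gaussian logarithmic Sobolev inequality, $\mathrm{Ent}(e^{sF})\le\tfrac{s^2}{2}\EE[e^{sF}|\nabla F|^2]\le\tfrac{s^2\kappa^2}{2}\EE[e^{sF}]$, which by Herbst's argument forces $\log\EE[e^{s(F(G)-\EE[F(G)])}]\le s^2\kappa^2/2$ for all $s\in\RR$, and then Chernoff's bound applied to $F$ and to $-F$ yields the stated two-sided tail with the stated constants. Applying this with $\kappa=\sigma_T$ to $F_n$ gives $\PP(|M_n-\EE[M_n]|>\lambda)\le 2\exp(-\lambda^2/(2\sigma_T^2))$, uniformly in $n$ (since $\sigma_{T_n}\le\sigma_T$, replacing $\sigma_{T_n}$ by $\sigma_T$ only weakens the bound).

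It then remains to pass to the limit. The sequence $\EE[M_n]$ is nondecreasing because $M_n\le M_{n+1}$; if it were unbounded, taking $\lambda=\EE[M_n]-K$ in the lower-tail estimate would force $\PP(M_n\ge K)\to 1$ for each fixed $K$, contradicting $\PP(M_n\ge K)\le\PP(M\ge K)\le 1/4$ for $K$ large (valid since $M<\infty$ almost surely). Hence $\EE[M_n]\uparrow\mu<\infty$, and monotone convergence gives $\EE[\sup_{t\in T}X_t]=\mu<\infty$. Finally, using $M_n\to M$ almost surely and $\EE[M_n]\to\EE[M]$, for every $\varepsilon>0$ we get $\PP(|M-\EE[M]|>\lambda)\le\liminf_n\PP(|M_n-\EE[M_n]|\ge\lambda-\varepsilon)\le 2\exp(-(\lambda-\varepsilon)^2/(2\sigma_T^2))$, and letting $\varepsilon\downarrow 0$ yields the claim.

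I expect the main obstacle to be the Gaussian concentration inequality for Lipschitz functions used in the second step: its proof --- via either Gaussian isoperimetry or the log-Sobolev inequality together with the Herbst argument --- is the one genuinely substantial point, while the finite-dimensional reduction, the Lipschitz-constant bookkeeping, and the monotone passage to the limit are routine. A secondary point needing care is the deduction of $\EE[\sup_t X_t]<\infty$ from the almost-sure finiteness of $\sup_t X_t$, which is precisely what the uniform-in-$n$ lower-tail bound delivers.
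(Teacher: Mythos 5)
This statement is quoted in the paper's appendix as a known result (Borell's inequality, cited to \cite[Theorem 2.1]{adler}) and the paper gives no proof of it, so there is no internal argument to compare against; what you have written is the standard Borell--TIS proof, and it is essentially correct. Your reduction via separability to $M_n=\max_{i\le n}X_{t_i}$, the representation $M_n\overset{d}{=}F_n(G)$ with $F_n$ a $\sigma_T$-Lipschitz max of linear functionals (using that the rows of $\Sigma_n^{1/2}$ have norms $(\Sigma_n)_{ii}^{1/2}\le\sigma_T$), the uniform-in-$n$ concentration bound, the contradiction argument showing $\EE[M_n]$ is bounded (hence $\EE[\sup_t X_t]<\infty$), and the Fatou passage to the limit are all sound. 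Two small points deserve care. First, of your two suggested routes to the Lipschitz concentration inequality, only the log-Sobolev/Herbst argument delivers the bound exactly as stated, centered at the mean with constant $2\exp(-\lambda^2/(2\sigma_T^2))$; the isoperimetric route as you sketch it gives the sharp bound centered at the \emph{median}, and transporting it to the mean by bounding the mean--median gap degrades the constant for moderate $\lambda$, so you should either rely on Herbst (or the Tsirelson--Ibragimov--Sudakov/Pisier interpolation argument) or accept a slightly weaker intermediate statement. Second, in the limiting step the monotone convergence theorem should be applied to $M_n-X_{t_1}\ge 0$ (since $M_n$ need not be nonnegative but dominates the integrable variable $X_{t_1}$); this is cosmetic but worth stating. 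With these adjustments your proposal is a complete and self-contained proof of a result the paper merely imports.
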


\section{Auxiliary proofs in Section \ref{s.2}}\label{appenB}

\subsection{Proof of \eqref{eq:sin-1F2}}\label{appenB.1}
{%\red 
In this section, we show that \eqref{eq:sin-1F2} holds. By      Eq. (3.761.1) in \cite{GRBook2015}, we have
\begin{equation}\label{eq:3.761.1}
\int_{0}^{1} x^{\mu-1} \sin(a x)\, dx
= \frac{-i}{2\mu}
\bigl[{}_1F_1(\mu;\mu+1;ia)-{}_1F_1(\mu;\mu+1;-ia)\bigr]
\end{equation}
as long as $a>0,\; \Re\mu>-1,\; \mu\neq 0.$ Here, 
\begin{align*}
	{}_1F_1(\mu; \mu+1; z) = \sum_{n=0}^{\infty} \frac{(\mu)_n}{(\mu+1)_n} \frac{z^n}{n!}\,,
\end{align*}
where $(\mu)_0=1$ and $(\mu)_n=\mu(\mu+1)(\mu+2)\cdots(\mu+n-1)$ for $n\geq 1$.
Taking the difference between the two hypergeometric series, we obtain
\begin{align*}
	D(\mu,z) = \sum_{n=0}^{\infty} \frac{(\mu)_n}{(\mu+1)_n} \frac{z^n - (-z)^n}{n!}.
\end{align*}
When $n$ is even, $z^n - (-z)^n = 0$. When $n$ is odd, $z^n - (-z)^n = 2z^n$. Setting $n=2k+1$ yields
\begin{align*}
	D(\mu,z) &= \sum_{k=0}^{\infty} \frac{(\mu)_{2k+1}}{(\mu+1)_{2k+1}} \frac{2z^{2k+1}}{(2k+1)!} \\
	&= \sum_{k=0}^{\infty} \frac{a}{a+2k+1} \frac{2z^{2k+1}}{(2k+1)!} \\
	&= 2az \sum_{k=0}^{\infty} \frac{1}{a+2k+1} \frac{z^{2k}}{(2k+1)!}\,.
\end{align*}
Substituting $\mu = 2H_0-2$ and $z = i\rho$ into the above new series representation for the difference $D(\mu,z)$, we have
\begin{align*}
    	{}_1F_1(2H_0-2; 2H_0-1; i\rho) &- {}_1F_1(2H_0-2; 2H_0-1; -i\rho) \\
    	&= 2(2H_0-2)(i\rho) \sum_{k=0}^{\infty} \frac{1}{(2H_0-2)+2k+1} \frac{(i\rho)^{2k}}{(2k+1)!} \\
    	&= 4i\rho(H_0-1) \sum_{k=0}^{\infty} \frac{1}{2H_0+2k-1} \frac{(-1)^k \rho^{2k}}{(2k+1)!}\,.
    \end{align*}
Putting this back into the expression \eqref{eq:3.761.1}, we get
\begin{align*}
	I(\rho, H_0) &= \rho \sum_{k=0}^{\infty} \frac{(-1)^k \rho^{2k}}{(2k+1)! (2H_0+2k-1)}\\
	&= \frac{\rho}{2H_0-1} \sum_{k=0}^{\infty} \frac{(H_0-\frac{1}{2})_k}{(\frac{3}{2})_k (H_0+\frac{1}{2})_k} \frac{(-\rho^2/4)^k}{k!} \\
	&=\frac{\rho}{2H_0-1} {}_1F_2\left(H_0-\frac{1}{2}; \frac{3}{2}, H_0+\frac{1}{2}; -\frac{\rho^2}{4}\right),
\end{align*}
which establishes \eqref{eq:sin-1F2}.
}

\subsection{Alternative proof of \eqref{eq:1F2_infty}}\label{appenB.2}

{%\red 
We next present an alternative derivation of \eqref{eq:1F2_infty}.  
According to \cite[Eq. 16.5.1]{OF2010}, the generalized hypergeometric function ${}_1F_2(a_1; b_1, b_2; z)$ admits the Mellin-Barnes representation
\begin{align}\label{eq:MB-1F2}
	{}_1F_2(a_1; b_1, b_2; z) = \frac{\Gamma(b_1)\Gamma(b_2)}{\Gamma(a_1)} \frac{1}{2\pi i} \int_{\mathcal{C}} \frac{\Gamma(a_1+s)\Gamma(-s)}{\Gamma(b_1+s)\Gamma(b_2+s)}(-z)^s ds\,, 
\end{align}
where the contour of integration separates the poles of $\Gamma(a_1+s)$ from the poles of $\Gamma(-s)$. Recall that 
\begin{align*}
	a_1=H_0-\tfrac12 , \quad b_1=\tfrac32, \quad b_2=H_0+\tfrac12\,, \quad z = -\frac{\rho^2}{4}\,.
\end{align*}
Given $H_0 \in (1/2, 1)$, we have $a_1 \in (0, 1/2)$, so we may choose any $c \in (-1/2, 0)$ such that the contour $\mathcal{C}$ runs from $c-i\infty$ to $c+i\infty$. To determine the asymptotic behavior for large $\rho$, we close the contour to the left by a large semicircle and apply Cauchy's residue theorem to \eqref{eq:MB-1F2}. This gives 
\begin{align*}
	{}_1F_2(a_1; b_1, b_2; z) \approx \frac{\Gamma(b_1)\Gamma(b_2)}{\Gamma(a_1)} \sum_{n=0}^\infty \text{Res}_{s = -a_1-n} \frac{\Gamma(a_1+s)\Gamma(-s)}{\Gamma(b_1+s)\Gamma(b_2+s)}(-z)^s \,,
\end{align*}
as $-z=\frac{\rho^2}{4} \to \infty$. Clearly, the leading-order term arises from the residue at the rightmost pole $s_0 = -a_1=-H_0+\tfrac12$.  
Evaluating this residue gives
\begin{align*}
{}_1F_2(a_1; b_1, b_2; z) &\approx \frac{\Gamma(b_1)\Gamma(b_2)}{\Gamma(a_1)} \left[ \frac{\Gamma(a_1)}{\Gamma(b_1-a_1)\Gamma(b_2-a_1)}(-z)^{-a_1} \right] \\
&= \frac{\Gamma(b_1)\Gamma(b_2)}{\Gamma(b_1-a_1)\Gamma(b_2-a_1)}(-z)^{-a_1}\\
&=\frac{\frac{\sqrt{\pi}}{2} \Gamma(H_0 + \frac{1}{2})}{\Gamma(2 - H_0)} \cdot 2^{2H_0 - 1} \rho^{-2H_0 + 1}
\end{align*}
as $\rho$ approaches infinity. This completes the proof of \eqref{eq:1F2_infty}.
}

\bigskip
\bigskip
\noindent\textbf{Acknowledgement}: The authors thank Professor Samy Tindel for his helpful comments. SL is supported by the Research Grants Council of Hong Kong (Grant No. P0031382/S-ZG9U) and the Postdoc Matching Fund of PolyU (Grant No. 1-W32B). YH was supported by the NSERC discovery fund and a startup fund of the University of Alberta. XW was supported by the startup fund of Sun Yat-sen University and the research fund of Johns Hopkins University.

\bibliographystyle{amsplain}
\bibliography{Ref_SWE_Additive}

\end{document}